\documentclass[11pt]{article}
\usepackage{latexsym}

\usepackage[hidelinks]{hyperref}

\usepackage{amsmath}
\usepackage{amsthm}
\usepackage{amssymb}
\usepackage{mathrsfs}
\usepackage{wasysym,color}
\usepackage{graphicx}
\usepackage[title]{appendix}
\usepackage{comment}
\usepackage{bm}
\usepackage{amsfonts}
\usepackage{graphicx}   
\usepackage{relsize}    
\usepackage{float}

\def\EE{\mathbb{E}}

\def\c\delta{\mathcal{\delta}}

\newtheorem{theorem}{Theorem}
\newtheorem{Proposition}[theorem]{Proposition}
\newtheorem{corollary}[theorem]{Corollary}
\newtheorem{lemma}[theorem]{Lemma}

\theoremstyle{definition}

\allowdisplaybreaks[4]

\numberwithin{equation}{section}
\numberwithin{theorem}{section}

\begin{document}
\centerline{\Large\bf  Rough differential equations driven by TFBM }

\centerline{\Large\bf with Hurst index $H\in (\frac{1}{4}, \frac{1}{3})$
\footnote{This work was supported by the National Natural Science Foundation of China under grant 12371198.}}

\vskip 6pt \centerline{{\bf Lijuan Zhang,}~~  ~~{\bf Jianhua Huang\footnote{Corresponding author: Jianhua Huang.\\
\indent
E-mail addresses: zhanglj@nau.edu.cn,  jhhuang32@nudt.edu.cn},}
} \vskip 3pt

\centerline{\footnotesize\it School of Mathematics, Nanjing Audit University, Nanjing 210000, China}
\centerline{\footnotesize\it $^{\dag}$College of Science, National University of Defense Technology, Changsha 410073, China}

\bigbreak \noindent{\bf Abstract}{
We consider the rough differential equations driven by tempered fractional Brownian motion with Hurst index $H\in (\frac{1}{4}, \frac{1}{3})$ and tempered parameter $\lambda>0$.
First, by means of piecewise linear approximation, we canonically lift the tempered fractional Brownian motion to a three-step geometric rough path in an almost sure sense. Subsequently, employing the Doss-Sussmann technique in conjunction with a greedy sequence of stopping times, we construct a suitable transformation that establishes a bijection between the solution of the rough differential equation and that of an associated ordinary differential equation. This yields the existence and uniqueness of a solution to the original equation. Based on this result and appealing to Gronwall's lemma, we further derive an upper bound for the solution norm, thereby providing a quantitative control on its growth.

\noindent{{\textbf{Keywords:}} Tempered fractional Brownian motion, rough path theory, rough differential equations}

\noindent{{\textbf{AMS Subject Classification.} }
60H10, 37B25, 60G22, 93E15}

\noindent{\section{Introduction}}

This paper is devoted to studying the following rough differential equation:
\begin{align}\label{eq0.1}
dy_{t}=f(y_{t})dt+g(y_{t})dB^{H,\lambda}_{t},~~t\in [a,b],~~y_{a}\in \mathbb{R}^{d},
\end{align}
where $B^{H,\lambda}_{t}$ denotes a tempered fractional Brownian motion with Hurst index $H\in (\frac{1}{4}, \frac{1}{3})$ and tempered parameter $\lambda>0$.
This model possesses considerable application value in simulation studies. Gatheral et al. \cite{Gatheral-Jaisson} referred to it as the rough fractional stochastic volatility model and noted that it can characterize the volatility characteristics of financial markets with greater accuracy when \(H<\frac{1}{3}\); Boniece et al. \cite{Boniece-Sabzikar} demonstrated that \(H=\frac{1}{3}\) corresponds exactly to the Davenport spectrum in the inertial range of turbulence. This implies that for $H<\frac{1}{3}$, this model can not only effectively capture the rougher volatility behavior in financial markets but also reveal the low-frequency dynamical characteristics of turbulence beyond the classical inertial range, thus endowing it with profound physical implications and broad application prospects.

Rough path theory was formally introduced by T. Lyons in his pioneering work \cite{Lyons-1998} in 1998.
This theory provides a systematic mathematical framework for analyzing the dynamical behaviors of differential equations driven by low-regularity paths.
By appropriately enriching low-regularity driving paths with ``additional structural information'', Lyons lifts the original trajectory to a rough path in a higher-dimensional tensor space. Based on this lift, he defines the rough path integral and establishes a well-posedness theory for solutions of rough differential equations.
This theory not only solves the well-posedness issue of differential equations driven by irregular paths but also plays a crucial role in the research of various singular equations, such as the Kardar-Parisi-Zhang equation and the Navier-Stokes equations driven by singular external forces.

As research progresses, the theoretical framework concerning the dynamical behavior of stochastic differential equations driven by rough paths has become increasingly refined. The existence and uniqueness of solutions to rough differential equations in the sense of Friz-Victoir were first established in \cite{Riedel-Scheutzow}. Local exponential stability of solutions to differential equations perturbed by rough noise was proved in \cite{Garrido-Atienza-Schmalfu?}. The existence of stochastic attractors for rough partial differential equations driven by nonlinear multiplicative rough paths was explored in \cite{Yang-Lin-Zeng}. A. Neamtu et al. \cite{Neamtu-2021} analyzed invariant manifolds for rough differential equations using an appropriately discretized Lyapunov-Perron-type method. L. Duc \cite{Due-2022-1} investigated the existence and upper semicontinuity of global pullback attractors for rough differential equations, and further explored asymptotic stability and global stability criteria in \cite{Due-2022-2}. The upper semicontinuity of stochastic attractors for evolution equations driven by rough noise was studied in \cite{Cao-Gao-2025}.

In the aforementioned studies on stochastic differential equations driven by rough noise, most results have focused on the case where the H\"{o}lder exponent of the rough noise satisfies $\alpha\in (\frac{1}{3}, \frac{1}{2})$. However, research on systems driven by rough noise with lower regularity ($\alpha<\frac{1}{3}$) remains relatively underdeveloped. The aim of this paper is to establish an existence and uniqueness theory for solutions to rough differential equations driven by tempered fractional Brownian motion with Hurst index $H\in (\frac{1}{4}, \frac{1}{3})$ and tempered parameter $\lambda>0$. This work consists of two parts:

\begin{itemize}
\item [(1)]First, we lift the tempered fractional Brownian motion to the framework of three-step geometric rough paths. The core method is based on piecewise linear approximation, and by appealing to the Borel-Cantelli lemma, we prove the convergence of the approximating sequence in an almost sure sense, thereby completing the construction of the lift. In this process, the key lies in a deep exploitation of the covariance structure of the tempered fractional Brownian motion and the relevant fundamental properties of the modified Bessel function of the second kind $K_{H}(t)$
 -- including differentiation formulas, recurrence relations, and boundedness estimates -- in order to overcome the divergence of the series appearing in the Borel-Cantelli lemma caused by the regime $H\in (\frac{1}{4}, \frac{1}{3})$. This ensures the almost sure convergence of the piecewise linear approximation.

\item [(2)]Subsequently, by applying the Doss-Sussmann technique, we construct a transformation $y_t=\varphi(t,\mathbf{B}^{H, \lambda},z_t)$, which establishes a one-to-one correspondence between the solution $y_t$ of the rough differential equation \eqref{eq0.1} and the solution $z_t$ of an associated ordinary differential equation. We first prove the existence and uniqueness of the solution to this ordinary differential equation on a small local time interval, thereby obtaining the local existence and uniqueness of the solution to the original rough differential equation \eqref{eq0.1} on that interval under the following assumptions:

$\mathrm{(H_1)}$ $f: \mathbb{R}^d \to \mathbb{R}^d$ is globally Lipschitz continuous with the Lipschitz constant $C_f$;

$\mathrm{(H_2)}$ $g$ either belongs to $C^3_b(\mathbb{R}^d, \mathcal{L}(\mathbb{R}^m, \mathbb{R}^d))$ such that
\begin{align}\label{eq0.2}
    \|g\|_{\infty, C_g} := \max\left\{ \|g\|_\infty, \|Dg\|_\infty, \|D^2_g\|_\infty, \|D^3_g\|_\infty \right\} < \infty.
\end{align}
To extend this result to arbitrary intervals, we construct a greedy sequence of stopping times that partitions the time axis. On each small subinterval between consecutive stopping times, we repeatedly apply the Doss-Sussmann technique and, by concatenating the solutions, establish the existence and uniqueness of the solution to \eqref{eq0.1} on the whole interval. Building on this, we further derive an upper bound for the norm of the solution by means of Gronwall's lemma.
\end{itemize}

The paper is organized as follows. In Section 2, we collect some necessary preliminaries on TFBM and rough path theory. In Section 3, we lift the tempered fractional Brownian motion to the framework of three-step geometric rough paths. In Section 4, we establish the existence and uniqueness theory for solution to rough differential equations driven by TFBM, as well as some important estimates for the solution. In Appendix, we give some very
useful properties which will be frequently used throughout the paper.

\noindent{\section{Preliminaries}}

\noindent{\subsection{TFBM}}

A tempered fractional Brownian motion (TFBM), denoted by $\{B^{H, \lambda}_{t}\}_{ t\geq0}$,
with Hurst index $H\in(0,1)$ and tempering parameter $\lambda>0$,
was first introduced by Meerschaert and Sabzikar \cite{Meerschaert-1}.
It is a centered Gaussian stochastic process, satisfying
$\EE[B^{H,\lambda}_{t}]=0$ for any $t\geq0$,
and possessing the covariance structure
\begin{align}\label{eq2.1}
\EE[B^{H,\lambda}_{s}B^{H,\lambda}_{t}]=
\frac{1}{2}\bigg[C_{t}^{2}|t|^{2H}
+C_{s}^{2}|s|^{2H}-C_{t-s}^{2}|t-s|^{2H}\bigg]
\end{align}
for any $t,s \geq0$. Here the function $C_{t}^{2}$ is defined for $t>0$ by
\begin{align}\label{eq2.1-1}
C_{t}^{2}
&=\int_{\mathbb{R}}\bigg[e^{-\lambda |t|(1-x)_{+}}(1-x)^{-\alpha}_{+}-e^{-\lambda |t|(-x)_{+}}(-x)^{-\alpha}_{+}\bigg]^{2}dx\nonumber\\
&=\frac{2\Gamma(2H)}
{(2\lambda)^{2H}|t|^{2H}}-\frac{2\Gamma(H+\frac{1}{2})}
{\sqrt{\pi}(2\lambda)^{H}|t|^{H}}K_{H}(\lambda |t|),
\end{align}
with $C_{0}^{2}=0$. In the above expressions,
$(x)_{+}=xI_{(x>0)}$, $\alpha=\frac{1}{2}-H$, and
$K_{v}(t)=\int_{0}^{\infty}e^{-t \cosh x}\cosh (vx)dx$ denotes the modified Bessel function \cite{Gaunt} of the second kind for $v>0$ and $t>0$.

\noindent{\subsection{ Rough paths}}

Let $I = [\min I, \max I] \subset \mathbb{R}$ be a compact time interval, and denote $|I| := \max I - \min I$ and $I^2 := I \times I$. For any finite-dimensional vector space $W$, let $C(I,W)$ be the space of continuous paths $y: I \to W$ equipped with the supremum norm $\| \cdot \|_{\infty,I}$ given by $\| y \|_{\infty,I} = \sup_{t \in I} \| y_t \|$, where $\| \cdot \|$ is the norm in $W$. We write $y_{s,t} := y_t - y_s$.

For $p \geq 1$, define $C^{p-\text{var}}(I,W) \subset C(I,W)$ as the space of continuous paths with finite $p$-variation, i.e.,
\[||| y |||_{p-\text{var},I} := \left( \sup_{\Pi(I)} \sum_{i=1}^n \| y_{t_i,t_{i+1}} \|^p \right)^{1/p} < \infty,\]
where the supremum is taken over all finite partitions $I$. It is well known that the map $||| y |||^p_{p-\text{var},I}$
 is a control, i.e., it satisfies
\begin{align}\label{eq0-2.1}
    ||| y |||^p_{p-\text{var},[s,s]} = 0,
    \quad
    ||| y |||^p_{p-\text{var},[s,u]} +
    ||| y |||^p_{p-\text{var},[u,t]} \leq
    ||| y |||^p_{p-\text{var},[s,t]}, \quad \forall s \leq u \leq t.
\end{align}
Endowed with the norm $||| y |||_{p-\text{var},I} := ||| y_{\min I} ||| + ||| y |||_{p-\text{var},I}$, the space $C^{p-\text{var}}(I,W)$ is a nonseparable Banach space; see \cite[Theorem 5.25, p. 92]{Friz-Victoir}.
For $0 < \alpha < 1$, let $C^\alpha(I,W)$ denote the space of $\alpha$-H\"older continuous functions on $I$, equipped with the norm
\begin{align}\label{eq0-2.2}
||| y |||_{\alpha,I} := ||| y_{\min I} ||| + ||| y |||_{\alpha,I}, \quad \text{where } ||| y |||_{\alpha,I} := \sup_{\substack{s,t \in I, s < t}} \frac{||| y_{s,t} |||}{(t - s)^\alpha} < \infty.
\end{align}
Now fix $\alpha \in \left( \frac{1}{4}, \frac{1}{2} \right)$. A triple $\mathbf{x} = (x,X^{2}, X^{3})$,
where $x\in C^\alpha(I,\mathbb{R}^m)$ and
\begin{align}\label{eq0-2.3}
    X^{2} \in C^{2\alpha}(I^2,\mathbb{R}^m \otimes \mathbb{R}^m) := \bigg\{ X^{2} \in C(I^2,\mathbb{R}^m \otimes \mathbb{R}^m): \sup_{\substack{s,t \in I, s < t}} \frac{\| X^{2}_{s,t} \|}{(t - s)^{2\alpha}} < \infty \bigg\},
\end{align}
\begin{align}\label{eq0-2.4}
    X^{3} \in C^{3\alpha}(I^2,\mathbb{R}^m \otimes \mathbb{R}^m  \otimes \mathbb{R}^m) := \bigg\{ X^{3} \in C(I^2,\mathbb{R}^m \otimes \mathbb{R}^m): \sup_{\substack{s,t \in I, s < t}} \frac{\| X^{3}_{s,t} \|}{(t - s)^{3\alpha}} < \infty \bigg\},
\end{align}
is called a rough path if it satisfies Chen's relation
\begin{align}\label{eq0-2.5}
    X^{2}_{s,t} -
    X^{2}_{s,u} -
    X^{2}_{u,t} = x_{s,u} \otimes x_{u,t},
\end{align}
\begin{align}\label{eq0-2.6}
    X^{3}_{s,t}=
    X^{3}_{s,u} +
    X^{3}_{u,t} +
    X^{2}_{s,u}\otimes x_{u,t}+
    x_{s,u}\otimes X^{2}_{u,t}.
\end{align}
$X^{2}$ is called a \textit{L\'evy area} for $x$ and is viewed as postulating the value of the quantity
\begin{align}
\int_s^t x_{s,r} \otimes dx_r :=
X^{2}_{s,t},\quad
\int_s^t\int_s^r
x_{s,u}\otimes dx_{u} \otimes dx_{r}
=X^{3}_{s,t},
\nonumber
\end{align}
where the right hand side is taken as a definition for the left hand side.
Let $\mathscr{C}^\alpha(I,\mathbb{R}^m \oplus (\mathbb{R}^m \otimes \mathbb{R}^m)
\oplus
(\mathbb{R}^m \otimes \mathbb{R}^m\otimes \mathbb{R}^m))
\subset
C^\alpha(I,\mathbb{R}^m) \oplus C^{2\alpha}(I^2,\mathbb{R}^m \otimes \mathbb{R}^m)
\oplus
C^{2\alpha}(I^2,\mathbb{R}^m \otimes \mathbb{R}^m\otimes \mathbb{R}^m)$ be the set of all rough paths on $I$ (or in short $\mathscr{C}^\alpha(I)$),
then $\mathscr{C}^\alpha(I)$ is a closed set (but not a linear space), equipped with the rough path semi-norm
\begin{align}\label{eq0-2.7}
    ||| \mathbf{x} |||_{\alpha,I} := ||| x |||_{\alpha,I} + ||| X^{2} |||_{2\alpha,I^2}
    +||| X^{3} |||_{3\alpha,I^2},
\end{align}
where
\begin{align}
||| X^{2} |||_{2\alpha,I^2} := \sup_{\substack{s,t \in I, s < t}} \frac{\| X^{2}_{s,t} \|}{(t - s)^{2\alpha}} < \infty,\nonumber
\end{align}
\begin{align}
||| X^{3} |||_{3\alpha,I^2} := \sup_{\substack{s,t \in I, s < t}} \frac{\| X^{3}_{s,t} \|}{(t - s)^{3\alpha}} < \infty.\nonumber
\end{align}
Throughout this paper, we will fix parameters $\frac{1}{4} < \alpha < \nu < \frac{1}{2}$ and $p = \frac{1}{\alpha}$ so that $C^\alpha(I,W) \subset C^{p-\text{var}}(I,W)$. We also consider the $p$-var semi-norm
\begin{align}\label{eq0-2.8}
    ||| \mathbf{x} |||_{p-\text{var},I} := \bigg( ||| x |||^p_{p-\text{var},I} + ||| X^{2} |||^{\frac{p}{2}}_{\frac{p}{2}-\text{var},I^2}
    + ||| X^{3} |||^{\frac{p}{3}}_{\frac{p}{3}-\text{var},I^2}
    \bigg)^{\frac{1}{p}},
\end{align}
where
\begin{align}\label{eq0-2.9}
||| X^{2} |||_{\frac{p}{2}-\text{var},I^2} := \bigg( \sup_{\Pi(I^2)} \sum_{i=1}^n \| X^{2}_{t_i,t_{i+1}} \|^{\frac{p}{2}} \bigg)^{2/p},
\end{align}
\begin{align}\label{eq0-2.10}
||| X^{3} |||_{\frac{p}{3}-\text{var},I^2} := \bigg( \sup_{\Pi(I^2)} \sum_{i=1}^n \| X^{3}_{t_i,t_{i+1}} \|^{\frac{p}{3}} \bigg)^{3/p}.
\end{align}

\noindent{\subsection{Rough integrals}}

A continuous path $y \in C^\alpha(I,W)$ is said to be controlled by $x \in C^\alpha(I,\mathbb{R}^m)$ if there exists a triple $(y', y'', R^y)$ with $y' \in C^\alpha(I,\mathcal{L}(\mathbb{R}^m,W))$,
$y'' \in C^{\alpha}(I^{2},\mathcal{L}(\mathbb{R}^m,W))$,
$R^{y\sharp} \in C^{3\alpha}(I^2,W)$,
$R^{y\sharp \sharp} \in C^{2\alpha}(I^2,W)$, such that the following decomposition holds for all $s,t\in I$:
\begin{align}\label{eq0-2.11}
    y_{s,t} = y'_s x_{s,t} + y''_s X^{2}_{s,t}+
    R^{y\sharp}_{s,t},
\end{align}
\begin{align}\label{eq0-2.11-1}
    y'_{t}-y'_{s} = y''_s x_{s,t}+
    R^{y\sharp \sharp}_{s,t}.
\end{align}
The paths $y'$ and $y''$ are referred to as the \textit{Gubinelli derivatives} of $y$.

Let $\mathscr{D}^{2\alpha}_x(I)$ denote the space of all triples $(y,y',y'')$ controlled by $x$. Endowed with the norm
\begin{align}\label{eq0-2.12}
    \| (y,y',y'') \|_{x,2\alpha,I} := \| y_{\min I} \| + \| y'_{\min I} \| + \| y''_{\min I} \| +||| (y,y',y'') |||_{\mathscr{D}^{2\alpha}_x(I),2\alpha,I},
\end{align}
where
\begin{align}\label{eq0-2.13}
|||(y,y',y'') |||_{\mathscr{D}^{2\alpha}_x(I),2\alpha,I} := ||| y'' |||_{\alpha,I} +||| R^{y\sharp } |||_{3\alpha,I^{2}}+ \| R^{y\sharp \sharp} \|_{2\alpha,I^2}.
\end{align}
Given a fixed rough path $\mathbf{x} = (x,X^{2},X^{3})$ and a controlled path $(y,y',y'') \in \mathscr{D}^{2\alpha}_x(I)$, the rough integral $\int_s^t y_u dx_u$ is defined as the limit of Riemann-Stieltjes type sums:
\begin{align}\label{eq0-2.14}
    \int_s^t y_u dx_u := \lim_{| \Pi | \to 0} \sum_{[u,v] \in \Pi} \bigg( y_u \otimes x_{u,v} + y'_u X^{2}_{u,v}+ y''_u X^{3}_{u,v}  \bigg),
\end{align}
where the limit is taken over all finite partitions $\Pi$ of $I$ with $| \Pi | := \max_{[u,v] \in \Pi} |v - u|$. Moreover, there exists a constant $C_\alpha = C_{\alpha,|I|} > 1$ such that
\begin{align}\label{eq0-2.15}
&\bigg\| \int_s^t y_u dx_u - y_s \otimes x_{s,t} -
y'_s X^{2}_{s,t}
-y''_s X^{3}_{s,t}
\bigg\| \nonumber\\
&\leq C_\alpha |t - s|^{4\alpha} \bigg( ||| x |||_{\alpha,[s,t]} ||| R^{y\sharp} |||_{3\alpha,[s,t]^2} \nonumber\\
&+
||| R^{y\sharp \sharp} |||_{2\alpha,[s,t]^2}
||| X^{2}|||_{2\alpha,[s,t]}
+
||| y'' |||_{\alpha,[s,t]} ||| X^{3} |||_{3\alpha,[s,t]^2} \bigg).
\end{align}

In this paper, we frequently employ the $p$-variation norm. For a controlled path $(y,y',y'')$, we set
\begin{align}\label{eq0-2.16}
    \| (y,y',y'') \|_{x,p,I} := \| y_{\min I} \| + \| y'_{\min I} \| + \| (y,y',y'') \|_{x,p,I},
\end{align}
where
\begin{align}\label{eq0-2.17}
||| (y,y',y'') |||_{x,p,I} := ||| y'' |||_{p-\text{var},I} +||| R^{y\sharp}|||_{\frac{p}{3}-\text{var},I^2}
+||| R^{y\sharp\sharp}|||_{\frac{p}{2}-\text{var},I^2}.
\end{align}
A corresponding estimate in the $p$-variation setting, analogous to
\eqref{eq0-2.16} , is given by
\begin{align}\label{eq0-2.17}
    &\bigg\| \int_s^t y_u dx_u - y_s \otimes x_{s,t} - y'_s X^{2}_{s,t}-
    y''_s X^{3}_{s,t}\bigg\|\nonumber\\
    &\leq C_p \bigg( ||| x |||_{p-\text{var},[s,t]} ||| R^{y\sharp} \|_{\frac{p}{3}-\text{var},[s,t]^2} +
    ||| X^{2} |||_{\frac{p}{2}-\text{var},[s,t]} ||| R^{y\sharp\sharp} \|_{\frac{p}{2}-\text{var},[s,t]^2}
    \nonumber\\
    &+
    ||| y'' \|_{p-\text{var},[s,t]} \| X^{3} \|_{\frac{p}{3}-\text{var},[s,t]^2} \bigg),
\end{align}
where the constant $C_p > 1$ is independent of the rough path $\mathbf{x}$ and the controlled path $(y,y',y')$.

\subsection{Greedy sequence of stopping times}
Throughout this paper, we will employ the concept of a greedy sequence of stopping times, as introduced in  \cite{Cass-Litterer-Lyons, Cong-Duc-Hong, Duc-Hong}. Given an exponent $\frac{1}{p} \in \bigg( \frac{1}{4}, \nu \bigg)$  and a parameter $\nu\in (0,1)$, we construct a sequence $\{ \tau_i(\gamma, I, p) \}_{i \in \mathbb{N}}$ of greedy times with respect to the $p$-variation norm by setting
\begin{align}\label{eq0-2.18}
    \tau_0 = \min I, \quad \tau_{i+1} := \inf \bigg\{ t > \tau_i : ||| \mathbf{x} |||_{p-\text{var}, [\tau_i, t]} = \gamma \bigg\} \wedge \max I.
\end{align}
Let $N_{\gamma, I, p}(\mathbf{x}) := \sup\{ i \in \mathbb{N} : \tau_i \leq \max I \}$ denote the number of intervals generated by this procedure. Then the following elementary estimate holds:
\begin{align}\label{eq0-2.19}
    N_{\gamma, I, p}(\mathbf{x}) \leq 1 + \gamma^{-p} ||| \mathbf{x} |||^p_{p-\text{var}, I}.
\end{align}
For the H\"{o}lder setting, given $\alpha \in \bigg( \frac{1}{4}, \nu \bigg)$, we construct an alternative greedy sequence
$\{ \bar{\tau}_i(\gamma, I, \alpha) \}_{i \in \mathbb{N}}$ with respect to the $\alpha$-H\"older norm via
\begin{align}\label{eq0-2.20}
    \bar{\tau}_0 = \min I, \quad \bar{\tau}_{i+1} := \inf \bigg\{ t > \bar{\tau}_i : (t - \bar{\tau}_i)^{1-2\alpha} + ||| \mathbf{x} |||_{\alpha-\text{Hol}, [\bar{\tau}_i, t]} = \gamma \bigg\} \wedge \max I.
\end{align}
Defining $N_{\gamma, I, \alpha}(\mathbf{x}) := \sup\{ i \in \mathbb{N} : \bar{\tau}_i \leq \max I \}$, we obtain the bound
\begin{align}\label{eq0-2.21}
    N_{\gamma, I, \alpha}(\mathbf{x}) \leq 1 + |I| \gamma^{-\frac{1}{\nu-\alpha}} \left( 1 + ||| \mathbf{x} |||^{\frac{1}{\nu-\alpha}}_{\nu-\text{Hol}, I} \right).
\end{align}

\noindent{\section{Geometric rough path over TFBM}}

In this section, we show that, almost surely on the canonical Wiener space, the sample paths of tempered fractional Brownian motion (TFBM) admit a canonical enhancement up to the third-level, thereby inducing a geometric rough path of roughness $ p \in (2,4)$.

Without loss of generality, we restrict our attention to the time interval $[0,1]$ in what follows.
For each $ n \geq 1 $ and $ k = 0, 1, \dots, 2^n $,
define the dyadic partition of $[0,1]$ by $ t_k^n = \frac{k}{2^n }$.
Let $ B_t^{H,\lambda;n} $ be the piecewise linear approximation of $ B_t^{H,\lambda} $ along this partition.
Since the sample paths of $ B_t^{H,\lambda;n} $ are smooth (i.e., of bounded variation),
they possess a unique canonical lift to a geometric rough path:
\begin{align}
\mathbf{B}_{s,t}^{H, \lambda;n} = (1,B_{s,t}^{H, \lambda;n,1},B_{s,t}^{H, \lambda;n,2},
B_{s,t}^{H, \lambda;n,3}), \quad 0 \leq s < t \leq 1,
\nonumber
\end{align}
where the higher-order components are given by iterated integrals.
This lift belongs to the space $\mathbf{\Omega}_p(\mathbb{R}^d)$ of geometric rough paths of roughness $ p$ (in fact, for any $ p \geq 1 $).

We now aim to prove that the sequence $\{\mathbf{B}_{s,t}^{H, \lambda;n}\}_{n\geq1}$ converges almost surely in the $ p $-variation  metric $ d_p$.
This convergence implies that, almost surely, the sample paths of $ B_{t}^{H, \lambda}$ admit a third-level enhancement to a geometric rough path with roughness $ p$, defined as the limit of $\mathbf{B}_{s,t}^{H, \lambda;n}$ under $ d_p$. Such an enhancement can be interpreted as the canonical lift of TFBM obtained via dyadic approximation.
Before establishing this convergence, we first derive the following key estimates for each level.

\begin{lemma}
For any $ m, n \geq 1$, $ k = 1, 2, \cdots, 2^n $, $H\in (\frac{1}{4}, \frac{1}{2}]$, $\lambda>0$ and $p\in (2,4)$,  the following estimate holds at the first level paths:
\[
[\mathbb{E} | B_{t_{k-1}^{n}, t_{k}^{n}}^{H, \lambda; m, 1} |^p ]^{\frac{1}{p}}\leq
\begin{cases}
c[C^{2}_{\frac{1}{2^n}}]^{\frac{1}{2}}
\frac{1}{2^{nH}}, & n \leq m; \\
c[C^{2}_{\frac{1}{2^m}}]^{\frac{1}{2}}
\frac{2^{m(1-H)}}{2^n}, & n > m,
\end{cases}
\]
\[
[\mathbb{E} |B_{t_{k-1}^{n}, t_{k}^{n}}^{H, \lambda;m+1, 1} - B_{t_{k-1}^{n}, t_{k}^{n}}^{H, \lambda; m, 1}|^p ]^{\frac{1}{p}} \leq
\begin{cases}
0, & n \leq m; \\
c[C^{2}_{\frac{1}{2^m}}]^{\frac{1}{2}}
\frac{2^{m(1-H)}}{2^n}, & n > m,
\end{cases}
\]
where $c$ is some positive constant not depending on $m, n, k$.
\end{lemma}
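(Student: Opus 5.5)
Since the first level of the lift is just the increment of the piecewise linear path, the plan is to reduce everything to second moments of Gaussian increments. For each fixed $m$, $B^{H,\lambda;m,1}_{s,t}=B^{H,\lambda;m}_t-B^{H,\lambda;m}_s$ is a linear functional of the centered Gaussian process $B^{H,\lambda}$, hence centered Gaussian. Therefore $[\mathbb{E}|\cdot|^p]^{1/p}=c_p[\mathbb{E}|\cdot|^2]^{1/2}$ with $c_p$ depending only on $p$ (and the dimension), and it suffices to bound variances. The basic input is the stationarity of increments read off from \eqref{eq2.1}:
\[
\mathbb{E}|B^{H,\lambda}_t-B^{H,\lambda}_s|^2=C^2_{t-s}|t-s|^{2H}.
\]

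\textbf{Case $n\le m$.} Here $t^n_{k-1},t^n_k$ are dyadic points of level $m$, and the piecewise linear interpolant agrees with $B^{H,\lambda}$ at these points. Hence
\[
B^{H,\lambda;m,1}_{t^n_{k-1},t^n_k}=B^{H,\lambda}_{t^n_{k-1},t^n_k},
\qquad\text{so}\qquad
\mathbb{E}\big|B^{H,\lambda;m,1}_{t^n_{k-1},t^n_k}\big|^2=C^2_{2^{-n}}\,2^{-2nH}.
\]
This gives the first bound. The same identity for $m+1$ shows that the difference $B^{H,\lambda;m+1,1}-B^{H,\lambda;m,1}$ vanishes identically, which is the ``$0$'' case of the second estimate.

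\textbf{Case $n>m$.} Now $[t^n_{k-1},t^n_k]$ lies inside a single level-$m$ interval $[t^m_{j-1},t^m_j]$, on which $B^{H,\lambda;m}$ is linear. Therefore
\[
B^{H,\lambda;m,1}_{t^n_{k-1},t^n_k}=2^{m-n}\,B^{H,\lambda}_{t^m_{j-1},t^m_j},
\]
whose $L^2$ norm is $2^{m-n}[C^2_{2^{-m}}]^{1/2}2^{-mH}=[C^2_{2^{-m}}]^{1/2}\,2^{m(1-H)}/2^n$. This is the claimed bound. For the difference, the interval also lies in a single level-$(m+1)$ interval, so the same computation gives an $L^2$ norm of at most $[C^2_{2^{-(m+1)}}]^{1/2}2^{(m+1)(1-H)}/2^n$. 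The triangle inequality then bounds the difference by the sum of the two terms.

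The only nontrivial step is comparing $C^2_{2^{-(m+1)}}$ with $C^2_{2^{-m}}$, i.e.\ showing that $C^2_{h/2}\le c\,C^2_h$ uniformly for $h\in(0,1/2]$. I would argue this as follows. The function $h\mapsto C^2_h h^{2H}$ is the variance of an increment over a lag $h$. From the integral representation \eqref{eq2.1-1}, $C^2_t$ is continuous and strictly positive on $(0,1]$, with a finite positive limit as $t\to0$, namely the fBm constant $\int[(1-x)_+^{-\alpha}-(-x)_+^{-\alpha}]^2dx$, by dominated convergence. Positivity together with a positive limit at $0$ gives $0<\inf_{(0,1]}C^2\le\sup_{(0,1]}C^2<\infty$, so the ratio $C^2_{h/2}/C^2_h$ is uniformly bounded. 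Alternatively, one could use the Bessel asymptotics $K_H(z)\sim \tfrac12\Gamma(H)(z/2)^{-H}$ to expand \eqref{eq2.1-1} near $0$. The expansion requires care, because the two leading singular terms cancel, which is why I prefer the dominated convergence argument. With this comparison in hand, $c$ depends only on $p,H,\lambda$, which completes the plan.
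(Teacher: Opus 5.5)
Your proof is correct and follows the paper's argument. You reduce to second moments by Gaussian equivalence of norms, use that the interpolant agrees with $B^{H,\lambda}$ at level-$m$ dyadic points when $n\le m$, and use that it is linear with factor $2^{m-n}$ on the enclosing level-$m$ interval when $n>m$.

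The only deviation is in the $n>m$ bound for the difference. The paper writes the difference exactly as $2^{m-n}\bigl(\Delta^{m+1}_{2l-1}B^{H,\lambda}-\Delta^{m+1}_{2l}B^{H,\lambda}\bigr)$ and compares scales via the monotonicity of $t\mapsto C^2_t t^{2H}$, which follows from $\frac{d}{dz}\bigl(z^HK_H(z)\bigr)=-z^HK_{H-1}(z)<0$ and gives $C^2_{h/2}\le 2^{2H}C^2_h$ in one line. You instead use the triangle inequality plus two-sided bounds on $C^2_t$ obtained by dominated convergence; this is also valid, but as $x\to-\infty$ the majorant must exploit the cancellation between the two terms, since bounding them separately gives no integrable majorant uniform in $t$.
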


\begin{proof}

For $ n \leq m $, we have
\begin{align}
B_{t_{k-1}^{n}, t_{k}^{n}}^{H, \lambda; m, 1} = B_{t_{k}^{n}}^{H, \lambda} - B_{t_{k-1}^{n}}^{H, \lambda}.\nonumber
\end{align}
Owing to the equivalence between the $L^{2}$-norm and the
$L^{p}$-norm for a Gaussian random variable, it follows from \eqref{eq2.1} that
\begin{align}
\mathbb{E}  | B_{t_{k-1}^{n}, t_{k}^{n}}^{H, \lambda; m, 1} |^p
&\leq c_{p}
[\mathbb{E} | B_{t_{k-1}^{n}, t_{k}^{n}}^{H, \lambda; m, 1} |^2 ]^{\frac{p}{2}}
= c_{p}\bigg[C^{2}_{\frac{1}{2^n}}
\frac{1}{2^{2nH}}\bigg]^{\frac{p}{2}}.\nonumber
\end{align}
Moreover, note that
\begin{align}
B_{t_{k-1}^{n}, t_{k}^{n}}^{H, \lambda; m+1, 1} - B_{t_{k-1}^{n}, t_{k}^{n}}^{H, \lambda; m, 1} = (B^{H, \lambda}_{t_{k}^{n}} - B^{H, \lambda}_{t_{k-1}^{n}}) - (B^{H, \lambda}_{t_{k}^{n}} - B^{H, \lambda}_{t_{k-1}^{n}}) = 0.\nonumber
\end{align}

For $n > m$, it holds that
\begin{align}\label{eq2.2-0}
B_{t_{k-1}^{n}, t_{k}^{n}}^{H, \lambda; m, 1} = \frac{2^m}{2^n} ( B_{t_{l}^{m}}^{H, \lambda} - B_{t_{l-1}^{m}}^{H, \lambda} ),
\end{align}
where $l$ is the unique integer such that $ [t_{k-1}^{n}, t_{k}^{n}] \subset [t_{l-1}^{m}, t_{l}^{m}] $. Hence
\begin{align}
[\mathbb{E}  | B_{t_{k-1}^{n}, t_{k}^{n}}^{H, \lambda; m, 1} |^{p}]^{\frac{1}{p}}
&= \frac{2^m}{2^n} [\mathbb{E}  |  B_{t_{l}^{m}}^{H, \lambda} - B_{t_{l-1}^{m}}^{H, \lambda}|^{p}]^{\frac{1}{p}}\nonumber\\
&
\leq c_{p}
\frac{2^m}{2^n} [\mathbb{E}  |  B_{t_{l}^{m}}^{H, \lambda} - B_{t_{l-1}^{m}}^{H, \lambda} |^{2}]^{\frac{1}{2}}
=c_{p} \frac{2^{m}}{2^n}
\bigg[C^{2}_{\frac{1}{2^m}}
\frac{1}{2^{2mH}}\bigg]^{\frac{1}{2}}.\nonumber
\end{align}

For $n>m$, let $l$ be the unique integer such that $ [t_{k-1}^{n}, t_{k}^{n}] \subset [t_{l-1}^{m}, t_{l}^{m}] $. Then the interval $[t^n_{k-1}, t^n_k]$ must lie in  either $[t^{m+1}_{2l-2}, t^{m+1}_{2l-1}]$ or $[t^{m+1}_{2l-1}, t^{m+1}_{2l}]$.
In the case $[t^n_{k-1}, t^n_k] \subset [t^{m+1}_{2l-2}, t^{m+1}_{2l-1}]$, it follows from \eqref{eq2.2-0} that
\begin{align}
B^{H, \lambda;m+1,1}_{t^n_{k-1}, t^n_k} - B^{H, \lambda;m,1}_{t^n_{k-1}, t^n_k}
&= \frac{2^m}{2^n}[(B^{H, \lambda}_{t^{m+1}_{2l-1}} - B^{H, \lambda}_{t^{m+1}_{2l-2}}) -
(B^{H, \lambda}_{t^{m+1}_{2l}} - B^{H, \lambda}_{t^{m+1}_{2l-1}})],\nonumber
\end{align}
and consequently
\begin{align}
&\mathbb{E}
|B^{H, \lambda;m+1,1}_{t^n_{k-1}, t^n_k} - B^{H, \lambda;m,1}_{t^n_{k-1}, t^n_k}|^{p}\nonumber\\
&\leq c_{p}\bigg(\frac{2^m}{2^n}\bigg)^{p}
[\mathbb{E}|
(B^{H, \lambda}_{t^{m+1}_{2l-1}} - B^{H, \lambda}_{t^{m+1}_{2l-2}}) -
(B^{H, \lambda}_{t^{m+1}_{2l}} - B^{H, \lambda}_{t^{m+1}_{2l-1}})
|^{2}]^{\frac{p}{2}}
\nonumber\\
&\leq c_{p}\bigg(\frac{2^m}{2^n}\bigg)^{p}
\bigg[2
C^{2}_{\frac{1}{2^{m+1}}}
\frac{1}{2^{2(m+1)H}}\bigg]^{\frac{p}{2}}\leq c_{p}\bigg(\frac{2^m}{2^n}\bigg)^{p}
\bigg[
C^{2}_{\frac{1}{2^{m}}}
\frac{1}{2^{2mH}}\bigg]^{\frac{p}{2}},
\nonumber
\end{align}
where in the last inequality we have used the monotonically increasing property of $C^{2}_{t}t^{2H}$ with respect to $t$. For the case $[t^n_{k-1}, t^n_k] \subset [t^{m+1}_{2l-1}, t^{m+1}_{2l}]$,
an analogous argument yields the same estimate.
\end{proof}

\begin{lemma}
For any $ m, n \geq 1$, $ k = 1, 2, \cdots, 2^n $, $H\in (\frac{1}{4}, \frac{1}{2}]$, $\lambda>0$ and $p\in (2,4)$, the following estimate holds at the second level paths:
\[
[\mathbb{E}
|B_{t_{k-1}^{n}, t_{k}^{n}}^{H,\lambda;m, 2}
|^{\frac{p}{2}}]^{\frac{2}{p}}
\leq
\begin{cases}
\frac{c}{2^{2Hn}}
, & n \leq m; \\
cC^{2}_{\frac{1}{2^m}}
\frac{2^{2m(1-H)}}{2^{2n}}, & n > m,
\end{cases}
\]
\[
[\mathbb{E}
|B_{t_{k-1}^{n}, t_{k}^{n}}^{H, \lambda; m+1, 2} - B_{t_{k-1}^{n}, t_{k}^{n}}^{H, \lambda; m, 2}|^{\frac{p}{2}}]^{\frac{2}{p}} \leq
\begin{cases}
\frac{c}{2^{\frac{(4H-1)m}{ 2}}2^{\frac{n}{2}}}, & n \leq m; \\
cC^{2}_{\frac{1}{2^m}}
\frac{2^{2m(1-H)}}{2^{2n}}, & n > m,
\end{cases}
\]
where $c$ is some positive constant not depending on $m, n, k$.
\end{lemma}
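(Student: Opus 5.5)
We split into the cases $n>m$ and $n\le m$, as in the proof of the first-level lemma. Throughout, the second level is a polynomial of degree two in a Gaussian vector. By hypercontractivity (equivalence of moments on a fixed Wiener chaos), the $L^{p/2}$ norm is bounded by the $L^2$ norm, since $p/2<2$. So it suffices to bound second moments.

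\emph{Case $n>m$.} Here $[t^n_{k-1},t^n_k]$ lies inside a single dyadic interval $[t^m_{l-1},t^m_l]$. On that interval $B^{H,\lambda;m}$ is linear with slope $2^m B^{H,\lambda}_{t^m_{l-1},t^m_l}$. Hence the iterated integral is explicit:
\[
B^{H,\lambda;m,2}_{t^n_{k-1},t^n_k}=\tfrac12\big(B^{H,\lambda;m,1}_{t^n_{k-1},t^n_k}\big)^{\otimes 2}.
\]
The first lemma then gives the bound $c\,C^2_{2^{-m}}2^{2m(1-H)}2^{-2n}$.

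For the difference we use that $[t^n_{k-1},t^n_k]$ lies in one half of $[t^{m}_{l-1},t^{m}_l]$, so both approximations are linear there. The difference of the two squares factors as
\[
\tfrac12(a\otimes a-b\otimes b)=\tfrac12\big((a-b)\otimes a+b\otimes(a-b)\big).
\]
Applying Cauchy--Schwarz and the first lemma to each factor yields the same order.

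\emph{Case $n\le m$.} Now $[t^n_{k-1},t^n_k]$ is a union of $2^{m-n}$ intervals of level $m$. By Chen's relation, $B^{H,\lambda;m,2}_{t^n_{k-1},t^n_k}$ splits into two parts: a sum of the local terms $\tfrac12(\Delta_i)^{\otimes2}$, where $\Delta_i$ are the level-$m$ increments, and a cross sum $\sum_{i<j}\Delta_i\otimes\Delta_j$. We treat the two parts separately.

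\begin{itemize}
\item \textbf{Diagonal entries.} These equal $\tfrac12 (B^{H,\lambda,r}_{t^n_{k-1},t^n_k})^2$, which is independent of $m$. Their $L^2$ norm is $c\,C^2_{2^{-n}}2^{-2nH}$, and $C^2_t$ is bounded near $0$ (from \eqref{eq2.1-1} and the Bessel asymptotics in the Appendix). This gives the bound $c\,2^{-2Hn}$. These entries also cancel in the $m+1$ versus $m$ difference.
\item \textbf{Off-diagonal entries $r\neq q$.} The components are independent. So $\mathbb{E}|\sum_{i<j}\Delta^r_i\Delta^q_j|^2$ equals $\sum_{i<j,\,i'<j'}R(i,i')R(j,j')$, where $R(i,i')=\mathbb{E}[\Delta^r_i\Delta^r_{i'}]$ is computed from \eqref{eq2.1} as a second difference of $C^2_t|t|^{2H}$.
\end{itemize}

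For $H\le\frac12$ the off-diagonal correlations are nonpositive. They decay like $|i-i'|^{2H-2}$ times $2^{-2mH}$, with an extra exponential tempering factor. This gives $\sum_{i'}|R(i,i')|\lesssim 2^{-2mH}$, and the level-$m$ double sum is bounded by $c(2^{m-n})^{2}2^{-4mH}$ for the variance of the whole path. Combined with the variance of the limiting increment, this gives $c\,2^{-4Hn}$.

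For the difference between levels $m+1$ and $m$: refining from $m$ to $m+1$ changes only the local terms on each level-$m$ subinterval, again by Chen's relation. Each level-$m$ cell contributes the area of a two-piece path, namely $\tfrac12(\Delta^{(1)}_i\otimes\Delta^{(2)}_i-\Delta^{(2)}_i\otimes\Delta^{(1)}_i)$, where $\Delta^{(1)}_i,\Delta^{(2)}_i$ are the two half-increments. So the difference is a sum of $2^{m-n}$ such areas. Its variance is $\sum_{i,i'}$ of products of covariances at scale $2^{-(m+1)}$. The diagonal terms contribute $2^{m-n}\cdot 2^{-4mH}$. For $|i-i'|\ge1$ the product of correlations decays like $|i-i'|^{4H-4}$, which is summable. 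Hence the variance is at most $c\,2^{m-n}2^{-4mH}=c\,2^{-n}2^{-(4H-1)m}$. Taking the square root gives the claimed $2^{-(4H-1)m/2}2^{-n/2}$.

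The main obstacle is the covariance bound for TFBM increments: showing $|R(i,i')|\lesssim 2^{-2mH}|i-i'|^{2H-2}$ uniformly. I would write $R$ as a second finite difference of $\phi(t)=C_t^2t^{2H}$ and bound it by $2^{-2m}\sup|\phi''|$ on the relevant range. Then I would use the differentiation formula $(t^{-H}K_H(t))'=-t^{-H}K_{H+1}(t)$ and the recurrence and boundedness properties of $K_H$ from the Appendix, together with $C^2_t$ being bounded near $0$, to get $|\phi''(t)|\lesssim t^{2H-2}$.
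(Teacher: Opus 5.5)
There is a genuine gap in your direct bound for $B^{m,2}_{t^n_{k-1},t^n_k}$ itself when $n<m$. The rest of the proposal is sound:
\begin{itemize}
\item Your case $n>m$, via $B^{m,2}=\tfrac12(B^{m,1})^{\otimes 2}$ and the factorization of $a\otimes a-b\otimes b$, works.
\item Your bound on $B^{m+1,2}-B^{m,2}$ for $n\le m$ works. The cell-wise area $\tfrac12(\Delta^{(1)}_i\otimes\Delta^{(2)}_i-\Delta^{(2)}_i\otimes\Delta^{(1)}_i)$ is exactly the paper's identity.
\item You control the cross terms $i\neq i'$ through the decay $|i-i'|^{4H-4}$, which is summable since $4H-4<-1$. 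This is a legitimate alternative to the paper's Lemma A.1, which extracts a cancellation in $R^{11}R^{22}-R^{12}R^{21}$ by Taylor expansion and Bessel recurrences. Both give variance $\lesssim 2^{m-n}2^{-4mH}$, dominated by the diagonal terms.
\end{itemize}

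\textbf{Where the direct bound fails.} Bounding the off-diagonal variance $\sum_{i<j,\,i'<j'}R(i,i')R(j,j')$ by $\big(\sum_{i,i'}|R(i,i')|\big)^2\lesssim (2^{m-n})^2 2^{-4mH}$ gives
\[
(2^{m-n})^2\, 2^{-4mH} \;=\; 2^{-4Hn}\, 2^{2(1-2H)(m-n)} .
\]
For $H<\tfrac12$ this grows without bound as $m\to\infty$. ``Combining with the variance of the limiting increment'' cannot remove this factor, and appealing to the limit would be circular anyway.

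The problem is structural, not a matter of constants. The quantity $\sum_{i,i'}|R(i,i')|\approx 2^{m-n}2^{-2mH}$ is a discrete $1$-variation of the covariance, and it diverges for $H<\tfrac12$. So any estimate of the L\'evy area built from absolute values of covariances loses exactly the cancellations that keep it bounded. A telltale sign is that your argument never uses $H>\tfrac14$, whereas for $H\le \tfrac14$ the second level of dyadic approximations is known to blow up.

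\textbf{The fix.} The paper avoids this by telescoping:
\[
B^{m,2}_{t^n_{k-1},t^n_k}=B^{n,2}_{t^n_{k-1},t^n_k}+\sum_{l=n+1}^{m}\big(B^{l,2}_{t^n_{k-1},t^n_k}-B^{l-1,2}_{t^n_{k-1},t^n_k}\big).
\]
\begin{itemize}
\item The first term is a single-cell quantity $\tfrac12(\Delta^n_k)^{\otimes 2}$ of $L^2$-size $c\,C^2_{2^{-n}}2^{-2Hn}$.
\item The summands are controlled by the difference estimate $c\,2^{-(4H-1)(l-1)/2}2^{-n/2}$. Summed over $l>n$, this is a convergent geometric series precisely because $H>\tfrac14$, and by Minkowski it yields $c\,2^{-2Hn}$.
\end{itemize}
You already prove the difference estimate, so replacing your direct off-diagonal bound by this telescoping step closes the gap. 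It also makes moot the fact that you never treated the local part $\tfrac12\sum_i \Delta^r_i\Delta^q_i$ of the off-diagonal entries.
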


\begin{proof}

The condition $p \in (2, 4)$ implies $\frac{p}{2} < 2$.
Hence, by the H\"{o}lder inequality, in order to obtain the second-level estimate in the $L^{\frac{p}{2}}$-norm,  it suffices to establish it in the $L^2$-norm.

For $ n > m $, the following holds:
\begin{align}\label{eq2.4-1200}
B_{t_{k-1}^n, t_k^n}^{H, \lambda; m,2;i,j}
&
= \int_{t_{k-1}^n}^{t_k^n} B_{t_{k-1}^n,v}^{H, \lambda; m,1;i} dB_{v}^{H, \lambda; m;j}\nonumber\\
&
= \frac{\Delta_l^m B^{H, \lambda;i} \Delta_l^m B^{H, \lambda;j}}{(\Delta t^m)^2} \int_{t_{k-1}^n}^{t_k^n} (v - t_{k-1}^n) dv\nonumber\\
&
=  2^{2(m-n)-1} \Delta_l^m B^{H, \lambda;i} \Delta_l^m B^{H, \lambda;j},
\end{align}
where $l$ is the unique integer such that $[t_{k-1}^n, t_k^n] \subset [t_{l-1}^m, t_l^m]$.
Then applying the notation of tensor products yields
\begin{align}\label{eq2.4}
B_{t_{k-1}^n, t_k^n}^{H, \lambda;m,2} = 2^{2(m-n)-1} (\Delta_l^m B^{H, \lambda})^{\otimes 2}.
\end{align}
It follows that
\begin{align}
&B_{t_{k-1}^n, t_k^n}^{H, \lambda;m+1,2}
- B_{t_{k-1}^n, t_k^n}^{H, \lambda;m,2} \nonumber\\
&=
\begin{cases}
2^{2(m-n)+1}(\Delta_{2l-1}^{m+1}B^{H, \lambda})^{\otimes 2} - 2^{2(m-n)-1}(\Delta_l^m B^{H, \lambda})^{\otimes 2},
& [t_{k-1}^n, t_k^n] \subset [t_{2l-2}^{m+1}, t_{2l-1}^{m+1}]; \\
2^{2(m-n)+1}(\Delta_{2l}^{m+1}B^{H, \lambda})^{\otimes 2} - 2^{2(m-n)-1}(\Delta_l^m B^{H, \lambda})^{\otimes 2}, &
[t_{k-1}^n, t_k^n] \subset [t_{2l-1}^{m+1}, t_{2l}^{m+1}]. \nonumber
\end{cases}
\end{align}
Since the components of the TFBM are independent and identically distributed,
we deduce by the Minkowski inequality that for the case  $[t_{k-1}^n, t_k^n] \subset
[t_{2l-2}^{m+1}, t_{2l-1}^{m+1}]$,
\begin{align}\label{eq2.5}
&\bigg[\mathbb{E} \bigg(
B_{t_{k-1}^n, t_k^n}^{H, \lambda;m+1,2}-
B_{t_{k-1}^n, t_k^n}^{H, \lambda;m,2}\bigg)^{2}\bigg]^{\frac{1}{2}}
\nonumber\\
&= 2^{2(m-n)}
\bigg[\mathbb{E} \bigg(
2(\Delta_{2l-1}^{m+1}B^{H, \lambda})^{\otimes 2} -
\frac{1}{2}(\Delta_l^m B^{H, \lambda})^{\otimes 2}
\bigg)^{2}\bigg]^{\frac{1}{2}}
\nonumber\\
&\leq   2^{2(m-n)}
\sum_{i,j=1}^{d}
\bigg[\mathbb{E}\bigg(
2\Delta_{2l-1}^{m+1}B^{H, \lambda;i}
\Delta_{2l-1}^{m+1}B^{H, \lambda;j} -
\frac{1}{2}\Delta_l^m B^{H, \lambda;i}
\Delta_l^m B^{H, \lambda;j}
\bigg)^{2}\bigg]^{\frac{1}{2}}\nonumber\\
&\leq   2^{2(m-n)}
\bigg[ \sum_{i,j=1}^{d}2
\mathbb{E}(
\Delta_{2l-1}^{m+1}B^{H, \lambda;i}
)^{2}
+
\sum_{i,j=1}^{d}\frac{1}{2}
\mathbb{E}(
\Delta_l^m B^{H, \lambda;i})^{2}\bigg]
\nonumber\\
&\leq c 2^{2(m-n)}
 C^{2}_{\frac{1}{2^{m}}}
 \bigg(\frac{1}{2^{m}}\bigg)^{2H}
 = c C^{2}_{\frac{1}{2^{m}}}
\frac{2^{2m(1-H)}}{2^{2n}}.
\end{align}
Similarly, for the case $[t_{k-1}^n, t_k^n] \subset [t_{2l-1}^{m+1},
 t_{2l}^{m+1}]$, we arrive at the identical estimate.

For $n \leq m$, we have
\begin{align}
B_{t_{k-1}^{n}, t_{k}^{n}}^{H, \lambda; m, 2; i, j}
&= \int_{t_{k-1}^{n}}^{t_{k}^{n}}
B_{t_{k-1}^{n}, v}^{H, \lambda;m, 1; i} dB^{H, \lambda;m; j}_v
\nonumber\\
&= \sum_{l=2^{(m-n)}(k-1)+1}^{2^{(m-n)}k}
\frac{\Delta_l^m B^{H, \lambda;j}}{\Delta t^m} \int_{t_{l-1}^m}^{t_l^m} \bigg( \frac{v - t_{l-1}^m}{\Delta t^m} B_{t_{l}^{m}}^{H, \lambda;i} + \frac{t_l^m - v}{\Delta t^m} B_{t_{l-1}^m}^{H, \lambda;i} - B_{t_{k-1}^n}^{H, \lambda;i}\bigg) dv\nonumber\\
&= \sum_{l=2^{(m-n)}(k-1)+1}^{2^{(m-n)}k} \bigg(
\frac{B_{t_{l-1}^{m}}^{H, \lambda;i} + B_{t_l^m}^{H, \lambda;i}}{2} - B_{t_{k-1}^n}^{H, \lambda;i} \bigg) \Delta_l^m B^{H, \lambda;j}.
\nonumber
\end{align}
Hence
\begin{align}
&B_{t_{k-1}^{n}, t_{k}^{n}}^{H, \lambda;m+1,2;i,j} - B_{t_{k-1}^{n}, t_{k}^{n}}^{H, \lambda;m,2;i,j}\nonumber\\
&= \sum_{l=2^{(m+1-n)}(k-1)+1}^{2^{(m+1-n)}k} \bigg(
\frac{B_{t_{l-1}^{m+1}}^{H, \lambda;i} + B_{t_l^{m+1}}^{H, \lambda;i}}{2} - B_{t_{k-1}^n}^{H, \lambda;i} \bigg) \Delta_l^{m+1} B^{H, \lambda;j}
\nonumber\\
&~~~~- \sum_{l=2^{(m-n)}(k-1)+1}^{2^{(m-n)}k} \bigg(
\frac{B_{t_{l-1}^{m}}^{H, \lambda;i} + B_{t_l^m}^{H, \lambda;i}}{2} - B_{t_{k-1}^n}^{H, \lambda;i} \bigg) \Delta_l^m B^{H, \lambda;j}\nonumber\\
&= \sum_{l=2^{(m-n)}(k-1)+1}^{2^{(m-n)}k}
\bigg[\bigg(
\frac{B_{t_{2l-2}^{m+1}}^{H, \lambda;i} + B_{t_{2l-1}^{m+1}}^{H, \lambda;i}}{2} - B_{t_{k-1}^n}^{H, \lambda;i} \bigg) \Delta_{2l-1}^{m+1} B^{H, \lambda;j}\nonumber\\
&~~~~+
\bigg(
\frac{B_{t_{2l-1}^{m+1}}^{H, \lambda;i} + B_{t_{2l}^{m+1}}^{H, \lambda;i}}{2} - B_{t_{k-1}^n}^{H, \lambda;i} \bigg) \Delta_{2l}^{m+1} B^{H, \lambda;j}\nonumber\\
&~~~~
-
\bigg(
\frac{B_{t_{2l-2}^{m+1}}^{H, \lambda;i} + B_{t_{2l}^{m+1}}^{H, \lambda;i}}{2} - B_{t_{k-1}^n}^{H, \lambda;i} \bigg)
(\Delta_{2l-1}^{m+1} B^{H, \lambda;j}+
\Delta_{2l}^{m+1} B^{H, \lambda;j})\bigg]
\nonumber\\
&= \frac{1}{2}
\sum_{l=2^{(m-n)}(k-1)+1}^{2^{(m-n)}k}
\bigg[
\Delta_{2l-1}^{m+1}B^{H,\lambda;i}
\Delta_{2l}^{m+1}B^{H,\lambda;j} - \Delta_{2l}^{m+1}B^{H,\lambda;i}
\Delta_{2l-1}^{m+1}B^{H,\lambda;j}\bigg].
\nonumber
\end{align}
By using the notation of tensor products, we obtain that
\begin{align}
&B_{t_{k-1}^{n}, t_{k}^{n}}^{H, \lambda;m+1,2} - B_{t_{k-1}^{n}, t_{k}^{n}}^{H, \lambda;m,2}\nonumber\\
&= \frac{1}{2}
\sum_{l=2^{(m-n)}(k-1)+1}^{2^{(m-n)}k}
\bigg[
\Delta_{2l-1}^{m+1}B^{H,\lambda}\otimes
\Delta_{2l}^{m+1}B^{H,\lambda} - \Delta_{2l}^{m+1}B^{H,\lambda}
\otimes\Delta_{2l-1}^{m+1}B^{H,\lambda}\bigg],
\nonumber
\end{align}
and thus
\begin{align}\label{eq2.2}
&\mathbb{E}\bigg(B_{t_{k-1}^{n}, t_{k}^{n}}^{H, \lambda;m+1,2} - B_{t_{k-1}^{n}, t_{k}^{n}}^{H, \lambda;m,2}\bigg)^{2}\nonumber\\
&\leq c\sum_{\substack{
    i\neq j \\
    i,j=1,\ldots, d
}}\sum_{r,l=2^{(m-n)}(k-1)+1}^{2^{(m-n)}k}
\mathbb{E}\bigg[
(
\Delta_{2l-1}^{m+1}B^{H,\lambda;i}
\Delta_{2l}^{m+1}B^{H,\lambda;j} - \Delta_{2l}^{m+1}B^{H,\lambda;i}
\Delta_{2l-1}^{m+1}B^{H,\lambda;j})\nonumber\\
&~~\times
(
\Delta_{2r-1}^{m+1}B^{H,\lambda;i}
\Delta_{2r}^{m+1}B^{H,\lambda;j} - \Delta_{2r}^{m+1}B^{H,\lambda;i}
\Delta_{2r-1}^{m+1}B^{H,\lambda;j})
\bigg]\nonumber\\
&=c\sum_{\substack{
    i\neq j \\
    i,j=1,\ldots, d
}}\sum_{r,l=2^{(m-n)}(k-1)+1}^{2^{(m-n)}k}
\bigg[[
\mathbb{E}
(
\Delta_{2l-1}^{m+1}B^{H,\lambda;i}
\Delta_{2r-1}^{m+1}B^{H,\lambda;i})]^{2}
\nonumber\\
&~~-
\mathbb{E}(
\Delta_{2l-1}^{m+1}B^{H,\lambda;j}
\Delta_{2r}^{m+1}B^{H,\lambda;j})
\mathbb{E}(
\Delta_{2l}^{m+1}B^{H,\lambda;i}
\Delta_{2r-1}^{m+1}B^{H,\lambda;i})\bigg]\nonumber\\
&=c\sum_{\substack{
    i\neq j \\
    i,j=1,\ldots, d
}}
\sum_{
\substack{
    l>r \\
r,l=2^{(m-n)}(k-1)+1}
}^{2^{(m-n)}k}
\bigg[[
\mathbb{E}
(
\Delta_{2l-1}^{m+1}B^{H,\lambda;i}
\Delta_{2r-1}^{m+1}B^{H,\lambda;i})]^{2}
\nonumber\\
&~~-
\mathbb{E}(
\Delta_{2l-1}^{m+1}B^{H,\lambda;j}
\Delta_{2r}^{m+1}B^{H,\lambda;j})
\mathbb{E}(
\Delta_{2l}^{m+1}B^{H,\lambda;i}
\Delta_{2r-1}^{m+1}B^{H,\lambda;i})\bigg]\nonumber\\
&~~+c\sum_{\substack{
    i\neq j \\
    i,j=1,\ldots, d
}}
\sum_{
l=2^{(m-n)}(k-1)+1}^{2^{(m-n)}k}
\bigg[[
\mathbb{E}
(
\Delta_{2l-1}^{m+1}B^{H,\lambda;i}
\Delta_{2l-1}^{m+1}B^{H,\lambda;i})]^{2}
\nonumber\\
&~~-
\mathbb{E}(
\Delta_{2l-1}^{m+1}B^{H,\lambda;j}
\Delta_{2l}^{m+1}B^{H,\lambda;j})
\mathbb{E}(
\Delta_{2l}^{m+1}B^{H,\lambda;i}
\Delta_{2l-1}^{m+1}B^{H,\lambda;i})\bigg]
:=I_{1}+I_{2},
\end{align}
where in the first equality we have used the independent and identically distributed property of the components of the TFBM.

According to Lemma A.1, we find that
\begin{align}\label{eq2.2-1}
I_{1}&\leq \sum_{\substack{
    i\neq j \\
    i,j=1,\ldots, d
}}
\sum_{
\substack{
    l>r \\
r,l=2^{(m-n)}(k-1)+1}
}^{2^{(m-n)}k}
\frac{c_{H, \lambda}}{2^{(4H+2)m}}
\leq
\frac{c_{H, \lambda}}{2^{4Hm}2^{2n}}.
\end{align}
By making use of the H\"{o}lder inequality, we have
\begin{align}\label{eq2.2-2}
I_{2}
&\leq \sum_{\substack{
    i\neq j \\
    i,j=1,\ldots, d
}}
\sum_{
l=2^{(m-n)}(k-1)+1}
^{2^{(m-n)}k}
\bigg[[
\mathbb{E}
(
\Delta_{2l-1}^{m+1}B^{H,\lambda;i})^{2}]^{2}
\nonumber\\
&~~~~+\mathbb{E}
(
\Delta_{2l-1}^{m+1}B^{H,\lambda;j})^{2}
\mathbb{E}
(
\Delta_{2l}^{m+1}B^{H,\lambda;i})^{2}\bigg]\nonumber\\
&\leq c
\sum_{
l=2^{(m-n)}(k-1)+1}^{2^{(m-n)}k}
\bigg[
C^{2}_{\frac{1}{2^{m+1}}}
\frac{1}{2^{2H(m+1)}}\bigg]^{2}\nonumber\\
&\leq c
\bigg[
C^{2}_{\frac{1}{2^{m}}}\bigg]^{2}
\frac{1}{2^{(4H-1)m}2^{n}},
\end{align}
where in the last inequality we have used the monotonically
increasing property of $C^{2}_{t}t^{2H}$ with respect to $t$. Inserting \eqref{eq2.2-1}-\eqref{eq2.2-2} into \eqref{eq2.2} results in
\begin{align}
&\mathbb{E}\bigg(B_{t_{k-1}^{n}, t_{k}^{n}}^{H, \lambda;m+1,2} - B_{t_{k-1}^{n}, t_{k}^{n}}^{H, \lambda;m,2}\bigg)^{2}
\leq
\frac{c_{H, \lambda}}{2^{(4H-1)m}2^{n}}.\nonumber
\end{align}

Now we consider the term $B_{t_{k-1}^{n}, t_{k}^{n}}^{H, \lambda;m, 2}$.

For $n \geq m$, in view of \eqref{eq2.4},
by using some similar arguments as in \eqref{eq2.5},
we have
\begin{align}
[\mathbb{E}|B_{t_{k-1}^{n},
t_{k}^{n}}^{H, \lambda;m, 2}|^{2}]^{\frac{1}{2}} \leq c C^{2}_{\frac{1}{2^{m}}}
\frac{2^{2m(1-H)}}{2^{2n}}.\nonumber
\end{align}
For $n < m$, it is clear that
\begin{align}
B_{t_{k-1}^{n}, t_{k}^{n}}^{H, \lambda;m, 2}
= \sum_{l=n+1}^{m}\bigg(B_{t_{k-1}^{n},t_{k}^{n}}^{H, \lambda;l, 2} - B_{t_{k-1}^{n},t_{k}^{n}}^{H, \lambda;l-1, 2}\bigg) + B_{t_{k-1}^{n},t_{k}^{n}}^{H, \lambda;n, 2}. \nonumber
\end{align}
Subsequently, application of the Minkowski inequality yields that
\begin{align}
[\mathbb{E}|B_{t_{k-1}^{n},
t_{k}^{n}}^{H, \lambda;m, 2}|^{2}]^{\frac{1}{2}}
&\leq \sum_{l=n+1}^m
[\mathbb{E}|B_{t_{k-1}^{n},t_{k}^{n}}^{H, \lambda;l,2} - B_{t_{k-1}^{n},t_{k}^{n}}^{H, \lambda;l-1,2}|^{2}]^{\frac{1}{2}}  + [\mathbb{E}|B_{t_{k-1}^{n},t_{k}^{n}}^{H, \lambda;n,2}|^{2}]^{\frac{1}{2}} \nonumber\\
&
\leq
\sum_{l=n+1}^m
\frac{c_{H,\lambda}}{2^{\frac{(4H-1)l}{ 2}}2^{\frac{n}{2}}}
+cC^{2}_{\frac{1}{2^{n}}}
\frac{1}{2^{2H n}}
\leq \frac{c_{H,\lambda}}{2^{2nH}}.
\end{align}
We complete the proof.
\end{proof}

\begin{lemma}
For any $ m, n \geq 1$, $ k = 1, 2, \cdots, 2^n $, $H\in (\frac{1}{4}, \frac{1}{2}]$, $\lambda>0$ and $p\in (2,4)$, the following estimate holds at the third level paths:
\[
[\mathbb{E}|B_{t_{k-1}^{n}, t_{k}^{n}}^{H,\lambda;m, 3}|^{\frac{p}{3}}]^{\frac{3}{p}} \leq
\begin{cases}
\frac{ c}{2^{3H n}}
, & n < m; \\
c [C^{2}_{\frac{1}{2^{m}}}]^{\frac{3}{2}}
\frac{2^{3m(1-H)}}{2^{3n}}, & n \geq m,
\end{cases}
\]
\[
[\mathbb{E}|B_{t_{k-1}^{n}, t_{k}^{n}}^{H, \lambda; m+1, 3} - B_{t_{k-1}^{n}, t_{k}^{n}}^{H, \lambda; m, 3}|^{\frac{p}{3}}]^{\frac{3}{p}} \leq
\begin{cases}
\frac{c}
{2^{\frac{(6H-1)m}{ 2}}2^{\frac{n}{2}}}, & n < m; \\
c
[C^{2}_{\frac{1}{2^{m}}}]^{\frac{3}{2}}
\frac{2^{3m(1-H)}}{2^{3n}}  , & n \geq m,
\end{cases}
\]
where $c$ is some positive constant not depending on $m, n, k$.
\end{lemma}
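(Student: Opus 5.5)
Since $p\in(2,4)$ gives $\frac{p}{3}<2$, Hölder's inequality reduces every estimate to the corresponding $L^2$ bound, as in the proof of the second-level lemma. The proof then follows the pattern of the two preceding lemmas: first the two estimates for $n\geq m$, then the increment $B^{m+1,3}-B^{m,3}$ for $n<m$, and finally the bound on $B^{m,3}$ for $n<m$ by telescoping.

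\emph{Case $n\geq m$.} Here $[t^n_{k-1},t^n_k]\subset[t^m_{l-1},t^m_l]$ for a unique $l$, and $B^{H,\lambda;m}$ is linear on this interval. Iterating the computation of \eqref{eq2.4-1200} gives
\begin{align}
B^{H,\lambda;m,3}_{t^n_{k-1},t^n_k}=\frac{2^{3(m-n)}}{6}(\Delta^m_l B^{H,\lambda})^{\otimes 3}.\nonumber
\end{align}
Gaussian hypercontractivity then yields $\|(\Delta^m_l B)^{\otimes3}\|_{L^2}\leq c\,[C^2_{2^{-m}}2^{-2mH}]^{3/2}$, and hence the bound $c[C^2_{2^{-m}}]^{3/2}2^{3m(1-H)}2^{-3n}$. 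For the increment, $B^{m+1,3}$ on the same interval equals $\frac{2^{3(m+1-n)}}{6}(\Delta^{m+1}_{2l-1}B)^{\otimes3}$ or the analogous term with index $2l$. Applying Minkowski and the monotonicity of $C^2_t t^{2H}$, exactly as in \eqref{eq2.5}, gives the same order.

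\emph{Increment for $n<m$.} Chen's relation \eqref{eq0-2.6} expands the third level over the dyadic pieces of generation $m$ inside $[t^n_{k-1},t^n_k]$, and we compare with generation $m+1$. Refining one level-$m$ interval $[u,w]$ at its midpoint $v$, the difference of the three-step signatures of the piecewise linear path is $\exp(\Delta_1)\otimes\exp(\Delta_2)-\exp(\Delta_1+\Delta_2)$. Its second-level part is $A_l=\frac12[\Delta_1,\Delta_2]$, and its third-level part is a cubic polynomial in $\Delta_1,\Delta_2$ with no pure $\Delta_i^{\otimes3}$ terms. Here $\Delta_1=\Delta^{m+1}_{2l-1}B$ and $\Delta_2=\Delta^{m+1}_{2l}B$. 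Propagating these local differences through Chen's relation, the increment $B^{m+1,3}-B^{m,3}$ over $[t^n_{k-1},t^n_k]$ splits into three kinds of sums over $l$:
\begin{itemize}
\item[(a)] local cubic terms $Q_l(\Delta_1,\Delta_2)$;
\item[(b)] cross terms $B^{H,\lambda}_{t^n_{k-1},t^m_{l-1}}\otimes A_l$ and $A_l\otimes B^{H,\lambda}_{t^m_l,t^n_k}$;
\item[(c)] cross terms of the form $B^{m,2}_{t^n_{k-1},t^m_{l-1}}\otimes(\Delta^m_l B)$ and $A_r\otimes(\Delta^m_l B)$ for $r<l$.
\end{itemize}
Terms of type (c) are rearranged into (b)-type sums by summation by parts.

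The sum of type (a) is handled like $I_2$ in \eqref{eq2.2-2}. The diagonal terms contribute at most $2^{m-n}\cdot2^{-6Hm}$, and the off-diagonal correlations are summable by Lemma A.1. This gives a second moment of order $2^{-(6H-1)m}2^{-n}$.

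The cross sums (b) and (c) are the \textbf{main obstacle}. A cross term pairs a level-one increment of size $2^{-nH}$ with the antisymmetric $A_l$. The second moment of $\sum_l (\text{first-level factor})\otimes A_l$ is a fourth- and sixth-order Gaussian moment, which I will expand by Wick's formula. Terms pairing $A_l$ with $A_r$ decay like the $I_1$ estimate \eqref{eq2.2-1} from Lemma A.1, since the covariance of disjoint increments decays like $|l-r|^{2H-2}2^{-2Hm}$ and $4H-4<-1$. The terms pairing the first-level factor across $l,r$ contribute $2^{-2nH}$. The resulting bound is $2^{-2nH}2^{-(4H-1)m}2^{-n}$, which is $\leq 2^{-(6H-1)m}2^{-n}$ because $n<m$. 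I expect this bookkeeping to be the delicate part, together with the correlations between different components $i,j,h$, which use the independence of components as in \eqref{eq2.2}. Taking square roots gives $c\,2^{-(6H-1)m/2}2^{-n/2}$.

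\emph{Bound on $B^{m,3}$ for $n<m$.} Telescope exactly as in the previous lemma:
\begin{align}
B^{m,3}_{t^n_{k-1},t^n_k}=\sum_{l=n+1}^m\big(B^{l,3}-B^{l-1,3}\big)+B^{n,3}.\nonumber
\end{align}
By Minkowski, $\|B^{m,3}\|_{L^2}\leq\sum_{l>n}c\,2^{-(6H-1)l/2}2^{-n/2}+c\,2^{-3Hn}$. The series converges because $6H>1$, and its sum is of order $2^{-(6H-1)n/2}2^{-n/2}=2^{-3Hn}$. Together with the bound on $B^{n,3}$ from the case $n\geq m$ and the boundedness of $C^2_t$ on $(0,1]$ (Appendix), this gives the claimed estimate $c\,2^{-3Hn}$.
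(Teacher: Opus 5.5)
\textbf{Gap in the increment estimate for $n<m$.} You follow the paper's route: reduction to $L^2$, the cube formula for $n\ge m$, the Coutin--Qian splitting of $B^{H,\lambda;m+1,3}-B^{H,\lambda;m,3}$ into local cubic terms plus first-level$\,\otimes A_l$ cross terms, then telescoping. But the step meant to deliver the stated increment bound for $n<m$ fails. Your own cross-term estimate is $2^{-2nH}2^{-(4H-1)m}2^{-n}$, and
\[
2^{-2nH}\,2^{-(4H-1)m}\,2^{-n}=2^{2H(m-n)}\cdot 2^{-(6H-1)m}\,2^{-n}\ \ge\ 2^{-(6H-1)m}\,2^{-n}\qquad (n<m).
\]
So the inequality you invoke goes the wrong way, by the unbounded factor $2^{2H(m-n)}$; it would need $2^{-2nH}\le 2^{-2mH}$, i.e.\ $n\ge m$.

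This is not slack in your bookkeeping: the cross terms really are that large. Take the $(1,1,2)$-entry of the increment. It equals $\sum_l X_l A^{12}_l$ plus local cubic terms, where $X_l$ is the first component of $B^{H,\lambda}_{t^m_{l-1}}-B^{H,\lambda}_{t^n_{k-1}}$. For Brownian motion, the model case $H=\frac12$, each $A^{12}_l$ is centred and independent of everything built from increments before $t^m_{l-1}$, so the sum is a martingale. Its second moment is therefore exactly
\[
\sum_{l\le 2^{m-n}}\mathbb{E}|X_l|^2\,\mathbb{E}|A^{12}_l|^2\asymp 2^{-m-2n},
\]
while the local terms are only $O(2^{-2m-n})$. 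This is far larger than the claimed $2^{-2m-n}$. The exponent $\frac{(6H-1)m}{2}+\frac n2$ in the statement is not the true order, so no refinement of this argument can reach it.

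\textbf{What is actually provable.} Your argument establishes
\[
\mathbb{E}\bigl|B^{H,\lambda;m+1,3}_{t^n_{k-1},t^n_k}-B^{H,\lambda;m,3}_{t^n_{k-1},t^n_k}\bigr|^2\le c\,(2^{m-n})^{1+2H}2^{-6Hm}=c\,2^{-(4H-1)m}\,2^{-(1+2H)n}.
\]
This is exactly where the paper's own proof ends (its last displayed increment bound), not the form written in the statement. It is also the form the paper later feeds into the tail estimate for $\rho_3$.

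With this corrected bound, your telescoping still gives the first estimate:
\[
\sum_{l>n}2^{-(4H-1)l/2}2^{-(1+2H)n/2}\lesssim 2^{-(4H-1)n/2}2^{-(1+2H)n/2}=2^{-3Hn}.
\]
Convergence now uses $H>\frac14$, not merely $6H>1$.

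\textbf{Minor point in the case $n\ge m$.} For $n=m$ the interval $[t^m_{k-1},t^m_k]$ contains a breakpoint of $B^{H,\lambda;m+1}$. There $B^{H,\lambda;m+1,3}$ equals $\frac16(\Delta_1^{\otimes3}+\Delta_2^{\otimes3})+\frac12(\Delta_1^{\otimes2}\otimes\Delta_2+\Delta_1\otimes\Delta_2^{\otimes2})$, not a single cube. The bound survives, but your formula, like the paper's, only covers $n>m$.
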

\begin{proof}
With $p \in (2, 4)$ in mind, to derive the $L^{\frac{p}{3}}$-norm of the third-level estimate,
it suffices to establish its corresponding $L^2$-norm.

In a similar way as in \eqref{eq2.4-1200},
we derive that for $n\geq m$,
\begin{align}
B_{t_{k-1}^{n}, t_{k}^{n}}^{H, \lambda; m, 3; i, j,\xi}
&=\int_{t_{k-1}^{n}}^{t_{k}^{n}}
\int_{t_{k-1}^{n}}^{r}
B_{t_{k-1}^{n},v}^{H, \lambda; m, i}
dB_{v}^{H, \lambda; m, j}
dB_{r}^{H, \lambda; m, \xi}\nonumber\\
&=\frac{\Delta_{l}^{m} B^{H, \lambda;i}
\Delta_{l}^{m} B^{H, \lambda;j}
\Delta_{l}^{m} B^{H, \lambda;\xi}
}{(\Delta t^{m})^{3}}
\int_{t_{k-1}^{n}}^{t_{k}^{n}}
\int_{t_{k-1}^{n}}^{r}
(v-t_{k-1}^{n})
dvdr\nonumber\\
&=\frac{1}{3}
2^{3(m-n)-1}
\Delta_{l}^{m} B^{H, \lambda;i}
\Delta_{l}^{m} B^{H, \lambda;j}
\Delta_{l}^{m} B^{H, \lambda;\xi},\nonumber
\end{align}
where $l$ is the unique integer such that $[t_{k-1}^n, t_k^n] \subset [t_{l-1}^m, t_l^m]$.
It follows that
\begin{align}
&B_{t_{k-1}^{n}, t_{k}^{n}}^{H, \lambda; m, 3}
=\frac{1}{3}
2^{3(m-n)-1}
(\Delta_{l}^{m} B^{H, \lambda})^{\otimes 3}.\nonumber
\end{align}
As a result,
\begin{align}
&B_{t_{k-1}^n, t_k^n}^{H, \lambda;m+1,3}
- B_{t_{k-1}^n, t_k^n}^{H, \lambda;m,3} \nonumber\\
&=
\begin{cases}
\frac{1}{3}
2^{3(m+1-n)-1}(\Delta_{2l-1}^{m+1}B^{H, \lambda})^{\otimes 3} -
\frac{1}{3}
2^{3(m-n)-1}(\Delta_l^m B^{H, \lambda})^{\otimes 3},
& [t_{k-1}^n, t_k^n] \subset [t_{2l-2}^{m+1}, t_{2l-1}^{m+1}]; \\
\frac{1}{3}
2^{3(m+1-n)-1}(\Delta_{2l}^{m+1}B^{H, \lambda})^{\otimes 3} -
\frac{1}{3}
2^{3(m-n)-1}(\Delta_l^m B^{H, \lambda})^{\otimes 3}, &
[t_{k-1}^n, t_k^n] \subset [t_{2l-1}^{m+1}, t_{2l}^{m+1}]. \nonumber
\end{cases}
\end{align}
In the case
$[t_{k-1}^n, t_k^n] \subset [t_{2l-2}^{m+1}, t_{2l-1}^{m+1}]$,
based on the fact that the components of the TFBM are independent and identically distributed, we have
\begin{align}
&
\mathbb{E}\bigg(B_{t_{k-1}^n, t_k^n}^{H, \lambda;m+1,3}
- B_{t_{k-1}^n, t_k^n}^{H, \lambda;m,3}\bigg)^{2} \nonumber\\
&=\frac{2^{6(m-n)-2}}{9}
\sum_{i,j,\xi}^{d}
\mathbb{E}\bigg(2^{3}
\Delta_{2l-1}^{m+1}B^{H, \lambda;i}
\Delta_{2l-1}^{m+1}B^{H, \lambda;j}
\Delta_{2l-1}^{m+1}B^{H, \lambda;\xi}
\nonumber\\
&~~~~-
\Delta_{l}^{m}B^{H, \lambda;i}
\Delta_{l}^{m}B^{H, \lambda;j}
\Delta_{l}^{m}B^{H, \lambda;\xi}
\bigg)^{2} \nonumber\\
&\leq c
2^{6(m-n)-2}
\sum_{i,j,\xi}^{d}
\bigg[
\mathbb{E}(\Delta_{2l-1}^{m+1}B^{H, \lambda;i})^{2}
\mathbb{E}(\Delta_{2l-1}^{m+1}B^{H, \lambda;j})^{2}
\mathbb{E}(\Delta_{2l-1}^{m+1}B^{H, \lambda;\xi})^{2}
\nonumber\\
&~~~~+
\mathbb{E}(\Delta_{l}^{m}B^{H, \lambda;i})^{2}
\mathbb{E}(\Delta_{l}^{m}B^{H, \lambda;j})^{2}
\mathbb{E}(\Delta_{l}^{m}B^{H, \lambda;\xi})^{2}\bigg]\nonumber\\
&\leq c
[C^{2}_{\frac{1}{2^{m}}}]^{3}
\frac{2^{6m(1-H)}}{2^{6n}}.\nonumber
\end{align}
For the case $[t_{k-1}^n, t_k^n] \subset [t_{2l-1}^{m+1},t_{2l}^{m+1}]$, an analogous approach can be employed to arrive at the same result.

For $n < m$, according to \cite[Lemma 11]{Coutin-Qian} that
\begin{align}\label{eq2.6-1}
&B_{t_{k-1}^{n}, t_{k}^{n}}^{H, \lambda; m+1, 3}
-B_{t_{k-1}^{n}, t_{k}^{n}}^{H, \lambda; m, 3}
\nonumber\\
&=\frac{1}{2}
\sum_l\bigl(B_{t_{2l-1}^{m+1}}^{H, \lambda}-
B_{t_{k-1}^n}^{H, \lambda}\bigr)
\otimes \bigl(\triangle_{2l-1}^{m+1} B^{H, \lambda}
\otimes \triangle_{2l}^{m+1}B^{H, \lambda}
- \triangle_{2l}^{m+1} B^{H, \lambda}
\otimes \triangle_{2l-1}^{m+1}B^{H, \lambda}
\bigr) \nonumber\\
&+\frac{1}{2}
\sum_l \bigl(\triangle_{2r-1}^{m+1} B^{H, \lambda}
\otimes \triangle_{2r}^{m+1} B^{H, \lambda}
- \triangle_{2r}^{m+1} B^{H, \lambda}
\otimes \triangle_{2r-1}^{m+1} B^{H, \lambda}
\bigr) \otimes \bigl(
B^{H, \lambda}_{t_k^n}-B^{H, \lambda}
_{t_{2l+1}^{m+1}}\bigr) \nonumber\\
&+\frac{1}{3}
\sum_l \triangle_{2l-1}^{m+1}
B^{H, \lambda}
\otimes \bigl(\triangle_{2l}^{m+1}
B^{H, \lambda} \otimes \triangle_{2l}^{m+1}
B^{H, \lambda} + \triangle_{2l-1}^{m+1} B^{H, \lambda} \otimes \triangle_{2l}^{m+1} B^{H, \lambda}\bigr) \nonumber\\
&-
\frac{1}{6} \sum_l \triangle_{2l}^{m+1} B^{H, \lambda}
\otimes \bigl(\triangle_{2l}^{m+1}B^{H, \lambda}
\otimes \triangle_{2l-1}^{m+1} B^{H, \lambda}
+ \triangle_{2l-1}^{m+1} B^{H, \lambda}
\otimes \triangle_{2l}^{m+1} B^{H, \lambda}\bigr) \nonumber\\
&-\frac{1}{6} \sum_l \bigl(\triangle_{2l-1}^{m+1}
B^{H, \lambda}
\otimes \triangle_{2l}^{m+1} B^{H, \lambda} + \triangle_{2l}^{m+1} B^{H, \lambda} \otimes \triangle_{2l-1}^{m+1} B^{H, \lambda}\bigr) \otimes \triangle_{2l-1}^{m+1} B^{H, \lambda},
\end{align}
where all the summations run over \(2^{m-n}(k-1)+1 \leq l \leq 2^{m-n}k\).
We first estimate the term
\begin{align}\label{eq2.6-2}
I_3 &\equiv \bigg| \sum_{l=2 + 2^{m-n}(k-1)}^{2^{m-n}k}
 \bigg\{ \bigl(B^{H, \lambda}_{t_{2l-2}^{m+1}} - B^{H, \lambda}_{t_{k-1}^n}\bigr)\nonumber\\
&\otimes \bigl(\triangle_{2l-1}^{m+1} B^{H, \lambda} \otimes \triangle_{2l}^{m+1} B^{H, \lambda} - \triangle_{2l}^{m+1} B^{H, \lambda} \otimes \triangle_{2l-1}^{m+1} B^{H, \lambda}\bigr) \bigg\} \bigg|^2,
\end{align}
Set \(A_l = B^{H, \lambda}_{t_{2l-1}^{m+1}} - B^{H, \lambda}_{t_{k-1}^n}\) and
\(A_l^i = B^{H, \lambda;i}_{t_{2l-1}^{m+1}} - B^{H, \lambda;i}_{t_{k-1}^n}\). Then
\begin{align}\label{eq2.6-3}
I_3
&= \sum_{i\neq j,\xi}
\bigg(
\sum_{l=2 + 2^{m-n}(k-1)}^{2^{m-n}k}
A_l^\xi\bigg(
\triangle_{2l-1}^{m+1}
B^{H, \lambda;i}
\triangle_{2l}^{m+1}B^{H, \lambda;j}
- \triangle_{2l}^{m+1}
B^{H, \lambda;i}
\triangle_{2l-1}^{m+1}
B^{H, \lambda;j}
\bigg) \bigg)^2 \nonumber\\
&= \sum_{i\neq j,\xi}
\sum_{l=2 + 2^{m-n}(k-1)}^{2^{m-n}k}
(A_l^\xi)^2
\bigg(\triangle_{2l-1}^{m+1}
B^{H, \lambda;i}
\triangle_{2l}^{m+1} B^{H, \lambda;j}
- \triangle_{2l}^{m+1} B^{H, \lambda;i}
\triangle_{2l-1}^{m+1} B^{H, \lambda;i}\bigg)^2
\nonumber\\
&~~~~+ 2 \sum_{i\neq j, \xi}
\sum_{r<l}
\bigg\{ A_r^\xi A_l^\xi
\triangle_{2r-1}^{m+1}
B^{H, \lambda;i}
\triangle_{2r}^{m+1} B^{H, \lambda;j}
\triangle_{2l-1}^{m+1} B^{H, \lambda;i}
\triangle_{2l}^{m+1} B^{H, \lambda;j}
\nonumber\\
&~~~~- A_r^\xi A_l^\xi
\triangle_{2l}^{m+1}
B^{H, \lambda;i}
\triangle_{2l-1}^{m+1} B^{H, \lambda;j}
\triangle_{2r-1}^{m+1}
B^{H, \lambda;i}
\triangle_{2r}^{m+1} B^{H, \lambda;j}
\nonumber\\
&~~~~- A_r^\xi A_l^\xi
\triangle_{2l-1}^{m+1} B^{H, \lambda;i} \triangle_{2l}^{m+1} B^{H, \lambda;j} \triangle_{2r}^{m+1} B^{H, \lambda;i} \triangle_{2r-1}^{m+1} B^{H, \lambda;j}
\nonumber\\
&~~~~ + A_r^\xi A_l^\xi \triangle_{2l}^{m+1}
B^{H, \lambda;i} \triangle_{2l-1}^{m+1} B^{H, \lambda;j} \triangle_{2r}^{m+1} B^{H, \lambda;i} \triangle_{2r-1}^{m+1} B^{H, \lambda;j} \bigg\}.
\end{align}
For \(i \neq j\), using the Cauchy-Schwarz inequality,
we obtain that
\begin{align}\label{eq2.6-4}
&\bigg|\mathbb{E}\bigg(A_r^\xi A_l^\xi
\triangle_{2l-1}^{m+1}B^{H, \lambda;i}
\triangle_{2r-1}^{m+1} B^{H, \lambda;i}
\triangle_{2l}^{m+1} B^{H, \lambda;j} \triangle_{2r}^{m+1} B^{H, \lambda;j}\bigg)\bigg| \nonumber\\
&= [\mathbb{E}(A_r^\xi)^{2}]^{\frac{1}{2}}
[\mathbb{E}(A_l^\xi)^{2}]^{\frac{1}{2}}
\bigg|\mathbb{E}\bigg(
\triangle_{2l-1}^{m+1} B^{H, \lambda;i}
\triangle_{2r-1}^{m+1} B^{H, \lambda;i}\bigg)\bigg| \bigg|\mathbb{E}\bigg(\triangle_{2l}^{m+1}
B^{H, \lambda;j} \triangle_{2r}^{m+1}
B^{H, \lambda;j}\bigr)\bigg| \nonumber\\
&\leq c_{H,\lambda}\bigg(\frac{2l - 2 - 2^{m-n}(k-1)}{2^{m+1}}\bigg)^H \bigg(\frac{2r - 2 - 2^{m-n}(k-1)}{2^{m+1}}\bigg)^H \nonumber\\
&\times\bigg(\frac{2l-2r}{2^{m+1}}\bigg)^{4H-4}
\bigg(\frac{1}{2^{m+1}}\bigg)^{4}\nonumber\\
&\leq c_{H,\lambda} \bigl(2^{m-n}\bigr)^{2H} \left(\frac{1}{2^m}\right)^{6H} \frac{1}{(r-l)^{4-4H}}.
\end{align}
Hence
\begin{align}
\mathbb{E} I_3
&\leq c_{H,\lambda} 2^{m-n} \left(\frac{1}{2^m}\right)^{4H} + c_{H,\lambda} \bigl(2^{m-n}\bigr)^{1+2H} \left(\frac{1}{2^m}\right)^{6H} \nonumber\\
&\leq c_{H,\lambda} \bigl(2^{m-n}\bigr)^{1+2H} \left(\frac{1}{2^m}\right)^{6H}.\nonumber
\end{align}
Similarly, we may estimate the second summation appeared in the right side of \eqref{eq2.6-1} and eventually we get the same upper bound as the above.

Next we estimate the remaining terms in \eqref{eq2.6-1}. By symmetry, consider for example the following term:
\begin{align}
&\mathbb{E}\left| \sum_{l=2 + 2^{m-n}(k-1)}^{2^{m-n}k}
\triangle_{2l-1}^{m+1} B^{H, \lambda}
\otimes \triangle_{2l}^{m+1} B^{H, \lambda}
\otimes \triangle_{2l}^{m+1} B^{H, \lambda} \right|^2 \nonumber\\
&= \sum_{i,j,\xi}
\sum_{l=2 + 2^{m-n}(k-1)}^{2^{m-n}k}
\mathbb{E}\bigl(\triangle_{2l-1}^{m+1} B^{H, \lambda;i}\bigr)^2 \mathbb{E}\bigl(\triangle_{2l}^{m+1} B^{H, \lambda;j}\bigr)^2 \mathbb{E}\bigl(\triangle_{2l}^{m+1} B^{H, \lambda;\xi}\bigr)^2 \nonumber\\
&+ 2 \sum_{ i,j,\xi}
\sum_{
\substack{
    l>r \\
r,l=2^{(m-n)}(k-1)+1}
}^{2^{(m-n)}k}
\mathbb{E}\bigl(\triangle_{2r-1}^{m+1} B^{H, \lambda;i} \triangle_{2l-1}^{m+1} B^{H, \lambda;i}\bigr) \nonumber\\
&\times\mathbb{E}\bigl(\triangle_{2r}^{m+1} B^{H, \lambda;j} \triangle_{2l}^{m+1} B^{H, \lambda;j}\bigr)
\mathbb{E}\bigl(\triangle_{2r}^{m+1} B^{H, \lambda;\xi} \triangle_{2l}^{m+1} B^{H, \lambda;\xi}\bigr) \nonumber\\
&+ 2 \sum_i
\sum_{
\substack{
    l>r \\
r,l=2^{(m-n)}(k-1)+1}
}^{2^{(m-n)}k}
\mathbb{E}\bigg[\bigl(\triangle_{2l}^{m+1} B^{H, \lambda;i}\bigr)^2 \bigl(\triangle_{2r}^{m+1} B^{H, \lambda;i}\bigr)^2 \bigl(\triangle_{2r-1}^{m+1} B^{H, \lambda;i} \triangle_{2l-1}^{m+1} B^{H, \lambda;i}\bigr)\bigg].\nonumber
\end{align}
In view of Lemma A.2 and \cite[Lemma 13]{Coutin-Qian}, we deduce that
\begin{align}
&\mathbb{E}\left| \sum_{l=2 + 2^{m-n}(k-1)}^{2^{m-n}k}
\triangle_{2l-1}^{m+1} B^{H, \lambda}
\otimes \triangle_{2l}^{m+1} B^{H, \lambda}
\otimes \triangle_{2l}^{m+1} B^{H, \lambda} \right|^2
&\leq c_{H,\lambda} 2^{m-n} \frac{1}{2^{6mH}}.
\end{align}
Similarly we may estimate the other terms appeared in \eqref{eq2.6-1} and finally get that
\begin{align}
\mathbb{E}\left| B_{t_{k-1}^{n}, t_{k}^{n}}^{H, \lambda; m+1, 3}
-B_{t_{k-1}^{n}, t_{k}^{n}}^{H, \lambda; m, 3} \right|^2
\leq c_{H,\lambda} \bigl(2^{m-n}\bigr)^{1+2H} \frac{1}{2^{6mH}}.
\end{align}

Now we consider the term $B_{t_{k-1}^{n}, t_{k}^{n}}^{H, \lambda;m, 3}$.

For $n \geq m$, in view of \eqref{eq2.4},
by using some similar arguments as in \eqref{eq2.5},
we have
\begin{align}
[\EE|B_{t_{k-1}^{n}, t_{k}^{n}}^{H, \lambda;m, 3}|^2]^{\frac{1}{2}} \leq c \bigg(C^{2}_{\frac{1}{2^{m}}}\bigg)^{\frac{3}{2}}
\frac{2^{3m(1-H)}}{2^{3n}}.\nonumber
\end{align}
For $n < m$, it is clear that
\begin{align}
B_{t_{k-1}^{n}, t_{k}^{n}}^{H, \lambda;m, 3}
= \sum_{l=n+1}^{m}\bigg(B_{t_{k-1}^{n},t_{k}^{n}}^{H, \lambda;l, 3} - B_{t_{k-1}^{n},t_{k}^{n}}^{H, \lambda;l-1, 3}\bigg) + B_{t_{k-1}^{n},t_{k}^{n}}^{H, \lambda;n, 3}. \nonumber
\end{align}
Then by the Minkowski inequality, we see that
\begin{align}
[\EE|B_{t_{k-1}^{n},t_{k}^{n}}^{H, \lambda;m, 3}|^2]^{\frac{1}{2}}
&\leq \sum_{l=n+1}^m
\bigg[\EE\bigg|B_{t_{k-1}^{n},t_{k}^{n}}^{H, \lambda;l,3} - B_{t_{k-1}^{n},t_{k}^{n}}^{H, \lambda;l-1,3}\bigg|^2\bigg]^{\frac{1}{2}} + \bigg[\EE\bigg|B_{t_{k-1}^{n},t_{k}^{n}}^{H, \lambda;n,3}\bigg|^2\bigg]^{\frac{1}{2}}\nonumber\\
&
\leq
\sum_{l=n+1}^m
c_{H,\lambda} \bigl(2^{l-1-n}\bigr)^{1+2h} \frac{1}{2^{6H(l-1)}}
+c\bigg(C^{2}_{\frac{1}{2^{n}}}\bigg)^{\frac{3}{2}}
\frac{1}{2^{3H n}}
\nonumber\\
&
\leq \frac{c}{2^{\frac{n}{2}}}
\frac{1}{2^{\frac{(6H-1)n}{ 2}}}
+c\bigg(C^{2}_{\frac{1}{2^{n}}}\bigg)^{\frac{3}{2}}
\frac{1}{2^{3H n}}
\leq c
\frac{1}{2^{3H n}}.
\end{align}

We complete the proof.

\end{proof}

In order to study the behavior of $ \mathbf{B}^{H,\lambda;m}$ in the space $ \mathbf{\Omega}_p(\mathbb{R}^d) $, we may need to control the $ p $-variation distance $ d_p$ in a suitable way. For $ \bm{X}, \tilde{\bm{X}} \in \mathbf{\Omega}_p(\mathbb{R}^d)$, define
\begin{align}\label{eq2.6}
\rho_j(\bm{X}, \tilde{\bm{X}})
:= \bigg(\sum_{n=1}^\infty n^\gamma \sum_{k=1}^{2^n} |X_{t_{k-1}^{n},t_k^n}^j - \tilde{X}_{t_{k-1}^{n},t_k^n}^j|^{\frac{p}{j}}
\bigg)^{\frac{j}{p}}, \quad j = 1, 2,3,
\end{align}
where $\gamma > p - 1$ is some fixed universal constant. For notational simplicity,
we will use $ \rho_j(\bm{X}) $ to denote $ \rho_j(\bm{X}, \tilde{\bm{X}})$ with $ \tilde{\bm{X}} = (1, 0, 0,0)$.

The following result, which is important for us, is proved in
\cite{Lyons-Qian}.

\begin{Proposition}
There exists some positive constant $ R = R(p, \gamma) $, such that for any $ \bm{X}, \tilde{\bm{X}} \in \mathbf{\Omega}_p(\mathbb{R}^d) $,
\begin{align}
d_p(\bm{X}, \tilde{\bm{X}}) &\leq R
\max\{\rho_1(\bm{X}, \tilde{\bm{X}}),
\rho_2(\bm{X}, \tilde{\bm{X}}),
\rho_3(\bm{X}, \tilde{\bm{X}}),
\rho_1(\bm{X}, \tilde{\bm{X}})
(\rho_1(\bm{X}) + \rho_1(\tilde{\bm{X}})),
\nonumber\\
&
\rho_2(\bm{X}, \tilde{\bm{X}})
(\rho_1(\bm{X}) + \rho_1(\tilde{\bm{X}})),
\rho_1(\bm{X}, \tilde{\bm{X}})
(\rho_2(\bm{X}) + \rho_2(\tilde{\bm{X}}))
\}. \nonumber
\end{align}
\end{Proposition}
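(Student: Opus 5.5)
The plan is to prove the estimate for dyadic endpoints $s<t$ first, extend it to all $s<t$ in $[0,1]$ by continuity, and then pass to $p$-variation through superadditivity. The proof is purely deterministic. Two ingredients drive it: Chen's relations \eqref{eq0-2.5}--\eqref{eq0-2.6}, and a weighted H\"older inequality in which the weight $n^\gamma$ with $\gamma>p-1$ makes $\sum_n n^{-\gamma/(p-1)}$ converge.

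\emph{Step 1: dyadic decomposition and the first level.} Fix dyadic $s<t$ in $[0,1]$. I decompose $[s,t]$ into maximal dyadic intervals, taking at most two intervals of each level $n$, and list them in time order as $J_1<J_2<\cdots$. Chen's relation at level one gives $x_{s,t}=\sum_i x_{J_i}$. Grouping by level and applying H\"older with exponents $p$ and $q=p/(p-1)$, I get
\begin{align}
|x_{s,t}-\tilde x_{s,t}| \le c\Big(\sum_{n} n^{-\gamma q/p}\Big)^{1/q}\Big(\sum_n n^{\gamma}\sum_{J\subset[s,t],\,|J|=2^{-n}} |x_J-\tilde x_J|^p\Big)^{1/p}. \nonumber
\end{align}
The first factor is finite precisely because $\gamma>p-1$. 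Set $\omega_1(s,t):=\sum_n n^\gamma\sum_{J\subset[s,t]}|x_J-\tilde x_J|^p$, where the inner sum runs over dyadic intervals $J\subset[s,t]$. This is superadditive: the intervals of a partition are disjoint, so every dyadic $J$ counted for one interval $[t_i,t_{i+1}]$ lies inside it and is counted only once across the partition. Summing over any partition therefore gives $\sum_i|x_{t_i,t_{i+1}}-\tilde x_{t_i,t_{i+1}}|^p\le c\,\rho_1(\bm X,\tilde{\bm X})^p$. The same argument with $\tilde{\bm X}=0$ bounds the $p$-variation of $x$ by $c\rho_1(\bm X)$.

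\emph{Step 2: the second level.} Iterating Chen's relation along $J_1<J_2<\cdots$ gives $X^2_{s,t}=\sum_i X^2_{J_i}+\sum_{i<i'}x_{J_i}\otimes x_{J_{i'}}$. The first sum is handled exactly as in Step 1, now with exponent $p/2$, and yields $\rho_2(\bm X,\tilde{\bm X})$. I rewrite the cross sum as $\sum_i x_{s,u_i}\otimes x_{J_i}$, where $u_i$ is the left endpoint of $J_i$. In the difference I use
\begin{align}
a\otimes b-\tilde a\otimes\tilde b=(a-\tilde a)\otimes b+\tilde a\otimes(b-\tilde b). \nonumber
\end{align}
Each factor is then bounded by Step 1, applied on the subinterval $[s,u_i]$ and on the single interval $J_i$, followed by H\"older across levels. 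This gives a bound of the form $\omega_1(s,t)^{1/p}\big(\omega_1^{X}(s,t)^{1/p}+\omega_1^{\tilde X}(s,t)^{1/p}\big)$, where $\omega_1^X$ and $\omega_1^{\tilde X}$ are the controls of Step 1 built from $x$ and from $\tilde x$ alone. Raising to the power $p/2$ and using superadditivity together with H\"older for the sum over the partition produces the term $\rho_1(\bm X,\tilde{\bm X})(\rho_1(\bm X)+\rho_1(\tilde{\bm X}))$.

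\emph{Step 3: the third level, which I expect to be the main obstacle.} Chen's relation \eqref{eq0-2.6} gives
\begin{align}
X^3_{s,t}=\sum_i\Big(X^3_{J_i}+X^2_{s,u_i}\otimes x_{J_i}+x_{s,u_i}\otimes X^2_{J_i}\Big). \nonumber
\end{align}
The pieces are treated as follows.
\begin{itemize}
\item The sum of the $X^3_{J_i}$ produces $\rho_3(\bm X,\tilde{\bm X})$.
\item The terms $x_{s,u_i}\otimes X^2_{J_i}$ produce $\rho_1(\bm X,\tilde{\bm X})(\rho_2(\bm X)+\rho_2(\tilde{\bm X}))$, together with a term with $\rho_1$ and $\rho_2$ interchanged.
\item The terms $X^2_{s,u_i}\otimes x_{J_i}$ require Step 2 applied on the partial intervals $[s,u_i]$, so they produce $\rho_2(\bm X,\tilde{\bm X})(\rho_1(\bm X)+\rho_1(\tilde{\bm X}))$.
\end{itemize}
The delicate point is that $X^2_{s,u_i}$ itself contains the quadratic cross terms from Step 2. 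Its difference therefore also brings in products $\rho_1(\bm X,\tilde{\bm X})\rho_1^2$, which do not appear in the list as stated. I would try to absorb them by estimating $X^2_{s,u_i}$ directly through the second-level rho functional, which is legitimate in the geometric setting where $X^2$ is itself determined by the path. Failing that, I would bound $\rho_1^2$ by a constant times $\rho_2$ plus lower-order terms, or else enlarge the maximum by this harmless extra term; that costs nothing in the convergence application that follows.

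The bookkeeping then has to confirm two things: that H\"older across levels with exponent $p/3$ still converges, since $\gamma>p-1>p/3-1$, and that every bound is expressed through superadditive controls, so that summing over partitions is valid. Finally, both sides are continuous in $s$ and $t$ and dyadic points are dense, so the estimate extends from dyadic endpoints to all $s<t$. Collecting the three levels in the definition of $d_p$ gives the stated maximum with a constant $R(p,\gamma)$.
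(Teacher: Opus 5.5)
Your dyadic scheme is the standard Hambly--Lyons/Lyons--Qian argument: maximal dyadic decomposition of $[s,t]$, Chen's relations, H\"older across levels with weight $n^\gamma$ ($\gamma>p-1$), and superadditive dyadic controls. The paper gives no proof of its own and only cites Lyons--Qian. Your Steps 1--2 are correct.

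\textbf{The gap in Step 3.} You flag the problem and then leave it open. Expanding $X^2_{s,u_i}$ via Step 2 makes $\sum_i\big(X^2_{s,u_i}\otimes x_{J_i}-\tilde X^2_{s,u_i}\otimes \tilde x_{J_i}\big)$ contribute two cubic terms:
\begin{itemize}
\item $\rho_1(\bm X,\tilde{\bm X})\,\rho_1(\bm X)\big(\rho_1(\bm X)+\rho_1(\tilde{\bm X})\big)$;
\item $\rho_1(\bm X,\tilde{\bm X})\,\rho_1(\tilde{\bm X})^2$.
\end{itemize}
Neither is in the stated maximum, and none of your three remedies is actually carried out.
\begin{itemize}
\item The first rests on a false premise. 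For $p>2$ the second level is \emph{not} determined by $x$; only its symmetric part is. The antisymmetric area is exactly the extra datum of the lift.
\item The second is asserted without a reason and with unspecified ``lower-order terms''.
\item The third proves a weaker inequality than the Proposition.
\end{itemize}

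\textbf{The missing idea.} It is a one-line consequence of geometricity. For $\bm X\in\mathbf{\Omega}_p(\mathbb{R}^d)$ one has $\mathrm{Sym}(X^2_{u,v})=\frac12\, x_{u,v}\otimes x_{u,v}$; this holds for lifts of smooth paths and passes to $d_p$-limits. With the Euclidean norm on $\mathbb{R}^d\otimes\mathbb{R}^d$, symmetrization is an orthogonal projection, so $\|X^2_J\|\ge \frac12\|x_J\|^2$ for every dyadic $J$.

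Summing with the weights $n^\gamma$ gives $\omega^{X}_1(s,t)\le 2^{p/2}\,\omega^{X}_2(s,t)$ for all $s<t$, where $\omega^X_j$ is your level-$j$ dyadic control. In particular $\rho_1(\bm X)^2\le 2\rho_2(\bm X)$, and likewise for $\tilde{\bm X}$. Hence
$\rho_1(\bm X,\tilde{\bm X})\big(\rho_1(\bm X)+\rho_1(\tilde{\bm X})\big)^2\le 4\,\rho_1(\bm X,\tilde{\bm X})\big(\rho_2(\bm X)+\rho_2(\tilde{\bm X})\big)$,
which is already in the list, with no extra terms. The bound must be applied to $\bm X$ and $\tilde{\bm X}$ separately, not to the difference: $x_J^{\otimes 2}-\tilde x_J^{\otimes 2}$ can vanish while $x_J-\tilde x_J$ does not. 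With this inequality inserted, Step 3 closes and gives exactly the stated maximum.

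\textbf{A smaller correction.} The controls $\omega(s,t)$ are not continuous in $(s,t)$, so ``both sides are continuous'' is not the right justification for non-dyadic endpoints. Instead, approximate $[s,t]$ from inside by dyadic $[s_k,t_k]$, and use continuity of the left-hand side together with monotonicity $\omega(s_k,t_k)\le\omega(s,t)$.
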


Now let
\begin{align}\label{eq2.7}
I(\bm{X}, \tilde{\bm{X}}) &:=
\max\{\rho_1(\bm{X}, \tilde{\bm{X}}),
\rho_2(\bm{X}, \tilde{\bm{X}}),
\rho_3(\bm{X}, \tilde{\bm{X}}),
\rho_1(\bm{X}, \tilde{\bm{X}})
(\rho_1(\bm{X}) + \rho_1(\tilde{\bm{X}})),
\nonumber\\
&
\rho_2(\bm{X}, \tilde{\bm{X}})
(\rho_1(\bm{X}) + \rho_1(\tilde{\bm{X}})),
\rho_1(\bm{X}, \tilde{\bm{X}})
(\rho_2(\bm{X}) + \rho_2(\tilde{\bm{X}}))
\}.
\end{align}
and observe that
\begin{align}\label{eq2.8}
&\{\omega : \mathbf{B}^{H,\lambda;m} \text{ is not Cauchy under } d_p\}\nonumber\\
&\subset \{\omega : \sum_{m=1}^{\infty} d_p(\mathbf{B}^{H,\lambda;m}, \mathbf{B}^{H,\lambda;m+1}) = \infty\}\nonumber\\
&\subset \limsup_{m \to \infty} \{\omega : d_p(\mathbf{B}^{H,\lambda;m}, \mathbf{B}^{H,\lambda;m+1}) > \frac{R}{2^{m\beta}}\}\nonumber\\
&
\subset \limsup_{m \to \infty} \{\omega : I(\mathbf{B}^{H,\lambda;m}, \mathbf{B}^{H,\lambda;m+1}) > \frac{1}{2^{m\beta}}\},
\end{align}
where $\beta$ is some positive constant to be chosen. Notice that the right hand side of \eqref{eq2.8} is
$\mathcal{B}(\Omega)$-measurable so its capacity is well-defined. Therefore, in order to prove that for almost-surely,
$\mathbf{B}^{H, \lambda; m}$ is a Cauchy sequence under $d_p$, it suffices to show that the right hand side of \eqref{eq2.8} has capacity zero. This can be shown by using the Borel-Cantelli lemma.

According to \eqref{eq2.7},
we may first need to establish estimates for
\begin{align}
P(\rho_j(\mathbf{B}^{H,\lambda;m}, \mathbf{B}^{H,\lambda;m+1}) > \eta), \ j = 1, 2,3,
\end{align}
and
\begin{align}
P(\rho_1(\mathbf{B}^{H,\lambda;m}) > \eta),~~~~
P(\rho_2(\mathbf{B}^{H,\lambda;m}) > \eta),
\end{align}
where $ m \geq 1 $ and $\eta > 0$. They are contained in the following lemma.

\begin{lemma}
For $H\in (\frac{1}{4}, \frac{1}{2})$, $\lambda > 0$,
$ m \geq 1$ and $\eta> 0$, we have the following estimates:
\begin{itemize}
\item[(1)]
\[P(\rho_1(\mathbf{B}^{H,\lambda;m})> \eta)
 \leq c\eta^{-p},\]
 \[P(\rho_2(\mathbf{B}^{H,\lambda;m})> \eta)
 \leq c\eta^{-p}. \]
\item[(2)] Let $\theta \in (0, Hp - 1)$ be some constant such that
\begin{align}
n^{\gamma+1} \leq c \frac{2^{n(p-1)}}{2^{n(p-\theta-1)}}, \quad
\forall n \geq 1, \nonumber
\end{align}
Then we have \begin{align}
P(\rho_1(\mathbf{B}^{H,\lambda;m}, \mathbf{B}^{H,\lambda;m+1}) > \eta) \leq c\eta^{-p}
[C^{2}_{\frac{1}{2^{m}}}]^{\frac{p}{2}}
\frac{1}{2^{m(Hp-\theta-1)}},
\nonumber
\end{align}

\begin{align}
P(\rho_2(\mathbf{B}^{H,\lambda;m}, \mathbf{B}^{H,\lambda;m+1}) > \eta) \leq c\eta^{-\frac{p}{2}}
[C^{2}_{\frac{1}{2^{m}}}]^{\frac{p}{2}}
\frac{1}{2^{m(Hp-\theta-1)}},\nonumber
\end{align}

\begin{align}
P(\rho_3(\mathbf{B}^{H,\lambda;m}, \mathbf{B}^{H,\lambda;m+1}) > \eta) \leq c\eta^{-\frac{p}{3}}
\frac{1}{2^{m(Hp-\theta-1)}}.\nonumber
\end{align}

\end{itemize}

\end{lemma}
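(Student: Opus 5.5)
All estimates will follow from Chebyshev's inequality applied to the $\frac{p}{j}$-th power of $\rho_j$, together with the moment bounds of the three preceding lemmas. Since $\rho_j^{p/j}$ is a weighted sum of $\frac{p}{j}$-th powers of increments, we have
\[
P(\rho_j>\eta)\le \eta^{-p/j}\,\mathbb{E}\rho_j^{p/j}
=\eta^{-p/j}\sum_{n\ge1}n^\gamma\sum_{k=1}^{2^n}\mathbb{E}|\cdot|^{p/j}.
\]
Each expectation will be bounded by the case-wise estimates of the first-, second- and third-level lemmas. We then sum over $k$, which costs a factor $2^n$, and over $n$, splitting into $n\le m$ and $n>m$. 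The quantity $C^2_t$ is bounded for $t\in(0,1]$: by its defining integral, or by the small-argument expansion of $K_H$ recorded in the Appendix, $C^2_t\to 2\Gamma(2H)\cdot(\text{const})$ as $t\to0$, so $C^2_t$ is continuous and bounded on $(0,1]$. This lets us absorb such factors into $c$ wherever convenient.

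\emph{Part (1).} For $\rho_1$ we use $\mathbb{E}\rho_1^p=\sum_n n^\gamma 2^n\,\mathbb{E}|B^{m,1}_{t_{k-1}^n,t_k^n}|^p$.
\begin{itemize}
\item For $n\le m$ the first lemma gives the summand $n^\gamma 2^{n(1-Hp)}$. This is summable because $Hp>1$, which we choose, i.e.\ $p>1/H$ with $p<4$; that is possible since $H>\frac14$.
\item For $n>m$ the summand is $n^\gamma 2^n 2^{mp(1-H)}2^{-np}$. Summing over $n>m$ gives about $m^\gamma 2^{m(1-Hp)}$, which is bounded uniformly in $m$.
\end{itemize}
For $\rho_2$ we use $\mathbb{E}\rho_2^{p/2}$, so Chebyshev yields $\eta^{-p/2}$.
\begin{itemize}
\item For $n\le m$ the terms are $n^\gamma 2^{n}2^{-Hpn}$, again summable.
\item For $n>m$ they are $n^\gamma 2^n 2^{mp(1-H)}2^{-np}$, the same as before.
\end{itemize}
The statement asks for $\eta^{-p}$. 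If $\eta\ge1$ we have $\eta^{-p/2}$ versus $\eta^{-p}$; if $\eta<1$ the bound $\eta^{-p}\ge1$ is trivial. So I would instead apply Chebyshev with the $p$-th power, i.e.\ bound $\mathbb{E}\rho_2^p$. Using Gaussian hypercontractivity (second chaos) for the second-level increments, and Minkowski in $L^{p/2}$ applied to the series defining $\rho_2^{p/2}$, we get $\|\rho_2^{p/2}\|_{L^2}\le\sum_n n^\gamma 2^n\,\|\,|B^{m,2}|^{p/2}\|_{L^2}$, which is finite by the same exponents. This gives $\eta^{-p}$ for all $\eta$.

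\emph{Part (2).} For the differences, level 1 gives nonzero terms only for $n>m$. There $\mathbb{E}\rho_1^p\le c[C^2_{2^{-m}}]^{p/2}\sum_{n>m}n^\gamma 2^{n}2^{mp(1-H)}2^{-np}$. Using the assumed $n^{\gamma}\le n^{\gamma+1}\le c\,2^{n\theta}$, this is at most $c[C^2_{2^{-m}}]^{p/2}2^{mp(1-H)}2^{-m(p-1-\theta)}=c[C^2]^{p/2}2^{-m(Hp-\theta-1)}$. For level 2 we use Chebyshev with exponent $p/2$.
\begin{itemize}
\item For $n>m$ the terms are $[C^2]^{p/2}n^\gamma 2^n 2^{mp(1-H)}2^{-np}$, giving the same bound.
\item For $n\le m$ the terms are $n^\gamma 2^n\,2^{-(4H-1)mp/4}2^{-np/4}$. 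The sum over $n\le m$ is dominated by $m^\gamma 2^{m(1-p/4)}2^{-(4H-1)mp/4}=m^\gamma 2^{-m(Hp-1)}\le c\,2^{-m(Hp-1-\theta)}$.
\end{itemize}
The remaining factor $[C^2_{2^{-m}}]^{p/2}$ in the $n\le m$ part is bounded below, since $C^2_t$ stays away from zero near $t=0$, so it can be inserted. Level 3 is identical: for $n\ge m$ the exponents are $3m(1-H)p/3-np$, and for $n<m$ the sum $\sum_{n<m}n^\gamma 2^n 2^{-(6H-1)mp/6}2^{-np/6}$ is dominated by $m^\gamma 2^{m(1-p/6)-(6H-1)mp/6}=m^\gamma 2^{-m(Hp-1)}$.

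The main obstacle is the $n\le m$ sums in the difference estimates. Their geometric ratio $2^{1-p/4}$ (respectively $2^{1-p/6}$) exceeds 1, so the sum is dominated by its largest term $n=m$. Only after combining with the $m$-dependent prefactor does the exponent collapse exactly to $-(Hp-1)$. The polynomial weight $m^\gamma$ must then be absorbed via $\theta$, and care with the boundedness and positivity of $C^2_t$ near $0$ is also needed.
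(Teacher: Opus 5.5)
Your proposal is correct and is essentially the paper's argument: Chebyshev's inequality applied to $\rho_j^{p/j}$ (the paper does this on the truncated events $A_N$ and lets $N\to\infty$), the case-wise moment bounds of the three preceding lemmas summed over $k$ and split at $n=m$, and absorption of the polynomial weight $m^{\gamma}$ into $2^{m\theta}$. The one place you go beyond the paper is $P(\rho_2(\mathbf{B}^{H,\lambda;m})>\eta)\le c\eta^{-p}$: there the paper's ``similarly'' would only give $c\eta^{-p/2}$ from Chebyshev at order $p/2$, whereas your bound on $\mathbb{E}\rho_2^{p}$ via Minkowski in $L^2$ and hypercontractivity on the second chaos does give the stated exponent, and your two-sided bounds on $C^2_t$ near $t=0$ justify inserting or dropping the $C^2_{2^{-m}}$ factors.
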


\begin{proof}
First consider
\begin{align}
P(\rho_1(\mathbf{B}^{H,\lambda;m}) > \eta) = P\bigg(\sum_{n=1}^\infty n^\gamma \sum_{k=1}^{2^n} |B_{t_{k-1}^{n},t_{k}^{n}}^{H,\lambda;m,1}|^p > \eta^p\bigg).
\nonumber
\end{align}
Define
\begin{align}
A_N := \bigg\{\omega : \sum_{n=1}^N n^\gamma \sum_{k=1}^{2^n} |B_{t_{k-1}^{n},t_{k}^{n}}^{H,\lambda;m,1}|^p > \eta^p\bigg\} \in \mathcal{B}(\Omega),\nonumber
\end{align}
and
\begin{align}
A := \bigg\{\omega : \sum_{n=1}^\infty n^\gamma \sum_{k=1}^{2^n} |B_{t_{k-1}^{n},t_{k}^{n}}^{H,\lambda;m,1}|^p > \eta^p\bigg\} \in \mathcal{B}(\Omega). \nonumber
\end{align}
It is obvious that $A_N \uparrow A$. By the properties of the capacity $P$, we have
\begin{align}
P(A) = \lim_{N \to \infty} P(A_N).   \nonumber
\end{align}
On the other hand, by the Chebyshev inequality for the capacity $P$ and Lemma 2.1, we have
\begin{align}
P(A_N) &\leq \eta^{-p} \sum_{n=1}^N n^\gamma \sum_{k=1}^{2^n} \mathbb{E}\bigg[ \bigg| B_{t_{k-1}^{n},t_{k}^{n}}^{H,\lambda;m,1}\bigg|^p \bigg]
\nonumber\\
&
\leq c\eta^{-p} \bigg[ \sum_{n=1}^m n^\gamma 2^n
\bigg(C^{2}_{\frac{1}{2^{n}}}
\bigg(\frac{1}{2^{n}}\bigg)^{2H}\bigg)^{\frac{p}{2}}
+ \sum_{n=m+1}^{\infty} n^\gamma 2^n
\bigg[
\frac{2^{m}}{2^{n}}
\bigg(
C^{2}_{\frac{1}{2^{m}}}
\bigg(\frac{1}{2^{m}}\bigg)^{2H}\bigg)^{\frac{1}{2}}
\bigg]^{p}\bigg]\nonumber\\
&\leq c\eta^{-p}
\bigg[ \sum_{n=1}^m n^\gamma
[C^{2}_{\frac{1}{2^{n}}}]^{\frac{p}{2}}
\frac{1}{2^{n(Hp-1)}}
+ 2^{mp}
\bigg[C^{2}_{\frac{1}{2^{m}}}
\bigg(\frac{1}{2^{m}}\bigg)^{2H}
\bigg]^{\frac{p}{2}}
\sum_{n=m+1}^{\infty} n^\gamma
\frac{1}{2^{n(p-1)}}\bigg]\nonumber\\
&
\leq c\eta^{-p}. \nonumber
\end{align}
It follows that
\begin{align}
P(\rho_1(\mathbf{B}^{H,\lambda;m}) > \eta) = P(A) \leq c\eta^{-p}.
\nonumber
\end{align}
Similarly,
\begin{align}
P(\rho_2(\mathbf{B}^{H,\lambda;m}) > \eta) \leq c\eta^{-p}.
\nonumber
\end{align}

Now consider
\begin{align}
P(\rho_1(\mathbf{B}^{H,\lambda;m}, \mathbf{B}^{H,\lambda;m+1}) > \eta) = P\bigg( \sum_{n=1}^{\infty} n^\gamma \sum_{k=1}^{2^n} \bigg| B_{t^{n}_{k-1}, t^{n}_{k}}^{H,\lambda;m+1,1} - B_{t^{n}_{k-1}, t^{n}_{k}}^{H,\lambda;m,1} \bigg|^p > \eta^p \bigg).
\nonumber
\end{align}
By similar reasons we will have
\begin{align}
&P(\rho_1(\mathbf{B}^{H,\lambda;m}, \mathbf{B}^{H,\lambda;m+1}) > \eta)\nonumber\\
&\leq \eta^{-p} \sum_{n=1}^{\infty}
n^{\gamma} \sum_{k=1}^{2^n} \mathbb{E}\bigg[ \bigg| B_{t^{n}_{k-1}, t^{n}_{k}}^{H,\lambda;m+1,1} - B_{t^{n}_{k-1}, t^{n}_{k}}^{H,\lambda;m,1}  \bigg|^p \bigg]\nonumber\\
&
\leq c\eta^{-p}  \sum_{n=m+1}^{\infty} n^{\gamma} 2^n \frac{2^{mp}}{2^{np}}
\bigg[C^{2}_{\frac{1}{2^{m}}}\bigg(\frac{1}{2^{m}}\bigg)^{2H}
\bigg]^{\frac{p}{2}}
\nonumber\\
&
= c\eta^{-p}
2^{mp}
\bigg[C^{2}_{\frac{1}{2^{m}}}\bigg(\frac{1}{2^{m}}\bigg)^{2H}
\bigg]^{\frac{p}{2}}
\sum_{n=m+1}^{\infty} n^{\gamma} \frac{1}{2^{n(p-1)}}.\nonumber
\end{align}
Since $\theta \in (0, Hp-1)$ is such that
\begin{align}
n^{\gamma+1} \leq c \frac{2^{n(p-1)}}{2^{n(p-\theta-1)}}, \quad \forall n \geq 1, \nonumber
\end{align}
we arrive at
\begin{align}
P(\rho_1(\mathbf{B}^{H,\lambda;m}, \mathbf{B}^{H,\lambda;m+1}) > \eta)
\leq c\eta^{-p}
[C^{2}_{\frac{1}{2^{m}}}]^{\frac{p}{2}}
\frac{1}{2^{m(Hp-\theta-1)}}.\nonumber
\end{align}
Consider the second level part. By similar reasons, we have
\begin{align}
&P(\rho_2(\mathbf{B}^{H,\lambda;m}, \mathbf{B}^{H,\lambda;m+1}) > \eta)\nonumber\\
&\leq \eta^{-\frac{p}{2}} \sum_{n=1}^{\infty}
n^{\gamma} \sum_{k=1}^{2^n} \mathbb{E}\bigg[ \bigg|
B_{t^{n}_{k-1}, t^{n}_{k}}^{H,\lambda;m+1,2} -
B_{t^{n}_{k-1}, t^{n}_{k}}^{H,\lambda;m,2}  \bigg|^{\frac{p}{2}}
\bigg]\nonumber\\
&\leq c\eta^{-\frac{p}{2}}
\bigg[ \sum_{n=1}^m n^{\gamma} 2^n
\bigg(
C^{2}_{\frac{1}{2^{m}}}
\frac{1}{2^{\frac{(4H-1)m}{ 2}}2^{\frac{n}{2}}}
\bigg)^{\frac{p}{2}}
+ \sum_{n=m+1}^\infty
 n^{\gamma} 2^n
\bigg(
C^{2}_{\frac{1}{2^m}}
\frac{2^{2m(1-H)}}{2^{2n}}
\bigg)^{\frac{p}{2}}\bigg]\nonumber\\
&\leq
c\eta^{-\frac{p}{2}}
\bigg[
[C^{2}_{\frac{1}{2^{m}}}]^{\frac{p}{2}}
\frac{1}{2^{\frac{(4H-1)mp}{ 4}}}
\sum_{n=1}^m
n^{\gamma} 2^{n(1-\frac{p}{4})}
+
[C^{2}_{\frac{1}{2^{m}}}]^{\frac{p}{2}}
2^{m(1-H)p}
\sum_{n=m+1}^\infty n^{\gamma}
2^{n(1-p)}
\bigg]\nonumber\\
&\leq
c\eta^{-\frac{p}{2}}
\bigg[
[C^{2}_{\frac{1}{2^{m}}}]^{\frac{p}{2}}
\frac{1}{2^{\frac{(4H-1)mp}{ 4}}}
2^{m(1-\frac{p}{4})}m^{\gamma+1}
+
[C^{2}_{\frac{1}{2^{m}}}]^{\frac{p}{2}}
2^{m(1-H)p}
\frac{1}{2^{m(p-1-\theta)}}
\bigg]\nonumber\\
&\leq
c\eta^{-\frac{p}{2}}
\bigg[
[C^{2}_{\frac{1}{2^{m}}}]^{\frac{p}{2}}
\frac{1}{2^{\frac{(4H-1)mp}{ 4}}}
2^{m(1-\frac{p}{4}+\theta)}
+
[C^{2}_{\frac{1}{2^{m}}}]^{\frac{p}{2}}
\frac{1}{2^{m(Hp-\theta-1)}}
\bigg]\nonumber\\
&\leq
c\eta^{-\frac{p}{2}}
[C^{2}_{\frac{1}{2^{m}}}]^{\frac{p}{2}}
\frac{1}{2^{m(Hp-\theta-1)}}.\nonumber
\end{align}
Finally, for the third level part, we have
\begin{align}
&P(\rho_3(\mathbf{B}^{H,\lambda;m}, \mathbf{B}^{H,\lambda;m+1}) > \eta)\nonumber\\
&\leq c\eta^{-\frac{p}{3}}
\bigg[
\sum_{n=1}^m n^{\gamma} 2^n
\bigg(
\frac{1}
{2^{m(4H-1)}2^{n(1+2H)}}\bigg)^{\frac{p}{6}}
+\sum_{n=m+1}^\infty
 n^{\gamma} 2^n
\bigg(C^{2}_{\frac{1}{2^{m}}}\bigg)^{\frac{p}{2}}
\bigg(\frac{2^{3m(1-H)}}{2^{3n}} \bigg)^{\frac{p}{3}}
\bigg]\nonumber\\
&\leq
c\eta^{-\frac{p}{3}}
\frac{1}{2^{m(Hp-\theta-1)}}.\nonumber
\end{align}

\end{proof}

Now we are in position to prove the main result of this section.

\begin{theorem}
Outside a $\mathcal{B}(\Omega)$-measurable set of capacity zero, $\mathbf{B}^{H,\lambda;m}$ is a Cauchy sequence under the $p$-variation distance $d_p$. In particular, for almost-surely, the sample paths of $B_{t}^{H,\lambda}$ can be enhanced to be geometric rough paths
\begin{align}
\mathbf{B}_{s,t}^{H,\lambda} =
(1, B_{s,t}^{H,\lambda;1}, B_{s,t}^{H,\lambda;2},
B_{s,t}^{H,\lambda;3}
), \quad 0 \leq s < t \leq 1, \nonumber
\end{align}
with roughness $p$, which are defined as the limit of sample (geometric rough) paths of $\mathbf{B}^{H,\lambda;m}$ in $\mathbf{\Omega}_p(\mathbb{R}^d)$ under the $p$-variation distance $d_p$.
\end{theorem}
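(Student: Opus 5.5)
We prove the Cauchy property through the inclusion \eqref{eq2.8}. It suffices to find $\beta>0$ with
\[\sum_{m\ge1}P\big(I(\mathbf{B}^{H,\lambda;m},\mathbf{B}^{H,\lambda;m+1})>2^{-m\beta}\big)<\infty.\]
The Borel--Cantelli lemma then shows that the $\limsup$ set in \eqref{eq2.8} has capacity zero. Off that set, the Proposition gives $d_p(\mathbf{B}^{H,\lambda;m},\mathbf{B}^{H,\lambda;m+1})\le R2^{-m\beta}$ for all large $m$, so the sequence is Cauchy in $d_p$.

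\textbf{Step 1: splitting $I$.} By the definition \eqref{eq2.7},
\[\{I>2^{-m\beta}\}\subset \bigcup_{j=1}^3\{\rho_j(\mathbf{B}^{H,\lambda;m},\mathbf{B}^{H,\lambda;m+1})>2^{-m\beta}\}\cup \mathcal{P}_m,\]
where $\mathcal{P}_m$ collects the product terms. Each product term is handled by the elementary inclusion $\{ab>\varepsilon\}\subset\{a>\varepsilon 2^{m\delta}\}\cup\{b>2^{-m\delta}\}$ for a small $\delta>0$. For example,
\[\{\rho_1(\mathbf{B}^{m},\mathbf{B}^{m+1})(\rho_1(\mathbf{B}^m)+\rho_1(\mathbf{B}^{m+1}))>2^{-m\beta}\}\subset\{\rho_1(\mathbf{B}^m,\mathbf{B}^{m+1})>2^{-m(\beta+\delta)}\}\cup\{\rho_1(\mathbf{B}^m)>\tfrac12 2^{m\delta}\}\cup\{\rho_1(\mathbf{B}^{m+1})>\tfrac12 2^{m\delta}\}.\]
The terms $\rho_2(\cdot,\cdot)(\rho_1+\rho_1)$ and $\rho_1(\cdot,\cdot)(\rho_2+\rho_2)$ are treated the same way.

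\textbf{Step 2: summability.} We apply part (2) of the preceding lemma with $\eta=2^{-m(\beta+\delta)}$. We also use $C^2_{2^{-m}}\le c$, which holds because $C_t^2$ is bounded as $t\to0$ for $\lambda>0$ (appendix/Bessel bounds). This gives
\begin{align*}
P\big(\rho_j(\mathbf{B}^m,\mathbf{B}^{m+1})>2^{-m(\beta+\delta)}\big)\le c\,2^{m(\beta+\delta)p/j}\,2^{-m(Hp-\theta-1)}, \quad j=1,2,3.
\end{align*}
The worst case is $j=1$. Choose $p\in(2,4)$ with $Hp>1$, which is possible since $H>\frac14$, and take $p<1/H'$ for the relevant regime. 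Then pick $\theta$ small and $\beta,\delta$ small enough that $(\beta+\delta)p<Hp-\theta-1$. With these choices all three series are geometric and summable.

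For the other pieces, part (1) of the lemma gives
\[P\big(\rho_i(\mathbf{B}^m)>\tfrac12 2^{m\delta}\big)\le c\,2^{-m\delta p}, \quad i=1,2,\]
uniformly in $m$. This is summable for any $\delta>0$. Hence the Borel--Cantelli series converges, and the first claim follows.

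\textbf{Step 3: identifying the limit.} $\mathbf{\Omega}_p(\mathbb{R}^d)$ is complete under $d_p$, being the closure of the smooth lifts. So on the complement of the null set the limit $\mathbf{B}^{H,\lambda}$ exists and is a geometric rough path. Its first level is $B^{H,\lambda}$, because the first level of $\mathbf{B}^{H,\lambda;m}$ converges to $B^{H,\lambda}$ uniformly by continuity of sample paths. Chen's identities \eqref{eq0-2.5}--\eqref{eq0-2.6} pass to the limit.

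\textbf{Main obstacle.} The delicate point is the exponent bookkeeping in Step 2. We need $Hp-\theta-1>0$ while also having $p\in(2,4)$ and $p>1/H$. We must also check that the third-level bound in the lemma really yields the decay $2^{-m(Hp-\theta-1)}$ uniformly. That decay rests on the $n<m$ sum, with exponent $(6H-1)$ versus $(4H-1)$, and so only uses $H>\frac14$. I would first verify that estimate and fix, for instance, $p$ slightly above $1/H$, with $\theta$ and $\beta$ chosen accordingly.
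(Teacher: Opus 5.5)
Your proposal is correct and follows the paper's argument. The paper also splits each product term in $I$, with your $\delta$ taken equal to $\beta$, so its condition reads $0<\beta<\frac{Hp-\theta-1}{2p}$; it then applies both parts of the preceding lemma and concludes by Borel--Cantelli and the inclusion \eqref{eq2.8}.

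Two small slips. Your general inclusion should read $\{ab>\varepsilon\}\subset\{a>\varepsilon 2^{-m\delta}\}\cup\{b>2^{m\delta}\}$; your worked example already uses this correct form. The stray condition ``$p<1/H'$'' should simply be $p<4$, so that $p\in(1/H,4)$.
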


\begin{proof}
By Lemma 2.3, we have
\begin{align}
&P\bigg(I(\mathbf{B}^{H,\lambda;m}, \mathbf{B}^{H,\lambda;m+1}) > \frac{1}{2^{m\beta}}\bigg)\nonumber\\
&\leq \sum_{j=1}^3
P\bigg(\rho_j(
\mathbf{B}^{H,\lambda;m}, \mathbf{B}^{H,\lambda;m+1}
) > \frac{1}{2^{m\beta}}\bigg) \nonumber\\
&~~~~ + P
\bigg(\rho_1(
\mathbf{B}^{H,\lambda;m}, \mathbf{B}^{H,\lambda;m+1})(
\rho_1(\mathbf{B}^{H,\lambda;m}) +
\rho_1(\mathbf{B}^{H,\lambda;m+1}))
> \frac{1}{2^{m\beta}}\bigg)\nonumber\\
&~~~~+ P
\bigg(\rho_1(
\mathbf{B}^{H,\lambda;m}, \mathbf{B}^{H,\lambda;m+1})(
\rho_2(\mathbf{B}^{H,\lambda;m}) +
\rho_2(\mathbf{B}^{H,\lambda;m+1}))
> \frac{1}{2^{m\beta}}\bigg)\nonumber\\
&~~~~+ P
\bigg(\rho_2(
\mathbf{B}^{H,\lambda;m}, \mathbf{B}^{H,\lambda;m+1})(
\rho_1(\mathbf{B}^{H,\lambda;m}) +
\rho_1(\mathbf{B}^{H,\lambda;m+1}))
> \frac{1}{2^{m\beta}}\bigg)\nonumber\\
&\leq
3 P\bigg(\rho_1(
\mathbf{B}^{H,\lambda;m}, \mathbf{B}^{H,\lambda;m+1})
> \frac{1}{2^{2m\beta}}\bigg) \nonumber\\
&~~~~+2
P\bigg(\rho_2(
\mathbf{B}^{H,\lambda;m}, \mathbf{B}^{H,\lambda;m+1})
> \frac{1}{2^{2m\beta}}\bigg)\nonumber\\
&~~~~+
P\bigg(\rho_3(
\mathbf{B}^{H,\lambda;m}, \mathbf{B}^{H,\lambda;m+1})
> \frac{1}{2^{m\beta}}\bigg)\nonumber\\
&~~~~+2 P\bigg(\rho_1(\mathbf{B}^{H,\lambda;m})
> \frac{2^{m\beta}}{2}\bigg)
+2 P\bigg(\rho_1(
\mathbf{B}^{H,\lambda;m+1}
) > \frac{2^{m\beta}}{2}\bigg)\nonumber\\
&~~~~+P\bigg(\rho_2(\mathbf{B}^{H,\lambda;m})
> \frac{2^{m\beta}}{2}\bigg)
+P\bigg(\rho_2(\mathbf{B}^{H,\lambda;m+1})
> \frac{2^{m\beta}}{2}\bigg)\nonumber\\
&\leq c\bigg[
[C^{2}_{\frac{1}{2^{m}}}]^{\frac{p}{2}}
\frac{1}{2^{m(Hp-1-\theta-2\beta p)}}
+
[C^{2}_{\frac{1}{2^{m}}}]^{\frac{p}{2}}
\frac{1}{2^{m(Hp-1-\theta-\frac{\beta p}{2})}}
+\frac{1}{2^{m\beta p}}\bigg], \nonumber
\end{align} where $\theta \in (0, Hp-1)$ is some fixed constant.

If we choose $\beta$ such that
\begin{align}
0 < \beta < \frac{Hp-\theta-1}{2p},\nonumber
\end{align} then
\begin{align}
\sum_{m=1}^{\infty} P\bigg(I(\mathbf{B}^{H,\lambda;m}, \mathbf{B}^{H,\lambda;m+1}) > \frac{1}{2^{m\beta}}\bigg) < \infty.
\nonumber
\end{align}
By the Borel-Cantelli lemma, we have
\begin{align}
P\bigg(\limsup_{m \to \infty} \bigg\{\omega : I(\mathbf{B}^{H,\lambda;m}, \mathbf{B}^{H,\lambda;m+1}) > \frac{1}{2^{m\beta}}\bigg\}\bigg) = 0, \nonumber
\end{align}
and the result follows from the inclusion \eqref{eq2.8}.

\end{proof}

\noindent{\section{Existence-uniqueness theorem and solution norm estimates}}

To establish the existence and uniqueness of the solution to \eqref{eq0.1}, we first introduce the pure rough differential equation given by
\begin{align}\label{eq4.1}
dy_t = g(y_t) dB^{H,\lambda}_{t}, \quad \forall t \in [a,b], y_a \in \mathbb{R}^d.
\end{align}
In particular, we first verify the differentiability of the solution to \eqref{eq4.1} with respect to its initial condition, and then transform the original system \eqref{eq0.1} into an ordinary differential equation by applying the Doss-Sussmann technique \cite{Sussmann}.

It should be noted that the existence, uniqueness, and continuity of the solution to \eqref{eq4.1} can be established by arguments analogous to those in \cite{Gubinelli}. We thus omit the detailed proof here and focus on verifying the differentiability of the solution $y_t(\mathbf{B}^{H,\lambda}, y_a)$ to \eqref{eq4.1} with respect to the initial value $y_a$. To this end, we first present the following a priori estimates.

\begin{Proposition}
The solution $y_t(\mathbf{B}^{H,\lambda}, y_a)$ to \eqref{eq4.1} is uniformly continuous
w.r.t. $y_a$, i.e. for any two solutions $y(\mathbf{B}^{H,\lambda}, y_a)$ and $\bar{y}(\mathbf{B}^{H,\lambda}, \bar{y}_a)$ the following estimates hold
\begin{align}
    \| \bar{y} - y \|_{\infty,[a,b]} \leq \| \bar{y}_a - y_a \| e^{(\log 2) \bar{N}_{[a,b]}(\mathbf{B}^{H,\lambda})},\nonumber
\end{align}
and
\begin{align}\label{eq4.2}
&||| \bar{y} - y, R^{(\bar{y}-y)\sharp} ,
R^{(\bar{y}-y)\sharp\sharp}|||_{p-\text{var},[a,b]}
\nonumber\\
&\leq \| \bar{y}_a - y_a \|\bar{N}_{[a,b]}^{\frac{p-1}{p}}(\mathbf{B}^{H,\lambda})
e^{(\log 2) \bar{N}_{[a,b]}(\mathbf{B}^{H,\lambda})}
- \| \bar{y}_a - y_a \|,
\end{align}
where $||| y, R^{y\sharp}, R^{y\sharp\sharp}|||_{p-\text{var},[s,t]} := ||| y |||_{p-\text{var},[s,t]}+
||| R^{y\sharp} |||_{\frac{p}{3}-\text{var},[s,t]^2}
+||| R^{y\sharp\sharp} |||_{\frac{p}{2}-\text{var},[s,t]^2}
$,
$\bar{N}_{[a,b]}(\mathbf{B}^{H,\lambda})$ is the maximal index of the maximal stopping time in the
greedy sequence
\begin{align}\label{eq4.3}
    \tau_0 = a, \tau_{k+1} := \inf \left\{ t > \tau_k :
     ||| \mathbf{B}^{H,\lambda} |||_{p-\text{var},[\tau_k, t]} = \min\bigg\{\frac{1}{12\Theta_{3}}, \bigg(\frac{1}{12\Theta_{3}}\bigg)^{\frac{1}{3}}\bigg\} \right\} \wedge b,
\end{align}
that lies in the interval $[a,b]$, with $\Theta_{3}$ given by \eqref{eq4.42-2}.

\end{Proposition}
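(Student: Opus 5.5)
The difference $\xi_t := \bar y_t - y_t$ satisfies a rough equation of the form
\[
d\xi_t = \big[g(\bar y_t)-g(y_t)\big]\,dB^{H,\lambda}_t = G_t\,\xi_t\,dB^{H,\lambda}_t,
\qquad G_t:=\int_0^1 Dg(y_t+\eta\xi_t)\,d\eta .
\]
We first treat one greedy interval $[\tau_k,\tau_{k+1}]$ from \eqref{eq4.3} and then concatenate over the $\bar N_{[a,b]}(\mathbf{B}^{H,\lambda})$ intervals.

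\textbf{Step 1 (controlled structure of $\xi$).} Since $y$ and $\bar y$ solve \eqref{eq4.1}, both are controlled by $B^{H,\lambda}$ with Gubinelli derivatives $y'=g(y)$ and $y''=Dg(y)g(y)$. Hence $\xi$ is controlled with
\[
\xi' = g(\bar y)-g(y), \qquad \xi'' = Dg(\bar y)g(\bar y)-Dg(y)g(y).
\]
Using $g\in C^3_b$ and the mean value theorem, we bound
\[
\|\xi'\|,\ \|\xi''\| \le C\|g\|_{\infty,C_g}\|\xi\|,
\]
together with the $p$-variation seminorms of $\xi''$, $R^{\xi\sharp}$, $R^{\xi\sharp\sharp}$. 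These bounds are linear in $\|\xi\|_{\infty,[s,t]}+|||\xi,R^{\xi\sharp},R^{\xi\sharp\sharp}|||_{p-\text{var},[s,t]}$, with coefficients polynomial in $|||\mathbf{B}^{H,\lambda}|||_{p-\text{var},[s,t]}$ (degree up to 3). The third derivative $D^3g$ is needed for the remainder $R^{\xi''}$.

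\textbf{Step 2 (one-interval estimate).} Apply the $p$-variation sewing estimate \eqref{eq0-2.17} to $\xi_{s,t}=\int_s^t [g(\bar y)-g(y)]\,dB^{H,\lambda}$. Collecting terms gives, on any $[s,t]\subset[\tau_k,\tau_{k+1}]$,
\[
|||\xi,R^{\xi\sharp},R^{\xi\sharp\sharp}|||_{p-\text{var},[s,t]}
\le \Theta_3\Big(|||\mathbf{B}^{H,\lambda}|||_{p-\text{var},[s,t]}+|||\mathbf{B}^{H,\lambda}|||^3_{p-\text{var},[s,t]}\Big)\Big(\|\xi_s\|+|||\xi,R^{\xi\sharp},R^{\xi\sharp\sharp}|||_{p-\text{var},[s,t]}\Big),
\]
where $\Theta_3$ is the constant built from $C_p$ and $\|g\|_{\infty,C_g}$. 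On a greedy interval we have
\[
|||\mathbf{B}^{H,\lambda}|||_{p-\text{var},[\tau_k,\tau_{k+1}]}\le \min\Big\{\tfrac{1}{12\Theta_3},\big(\tfrac{1}{12\Theta_3}\big)^{1/3}\Big\},
\]
so the prefactor is at most $1/6<1/2$. Absorbing it into the left-hand side yields
\[
|||\xi,R^{\xi\sharp},R^{\xi\sharp\sharp}|||_{p-\text{var},[\tau_k,\tau_{k+1}]}\le \|\xi_{\tau_k}\|,
\]
and therefore
\[
\|\xi\|_{\infty,[\tau_k,\tau_{k+1}]}\le \|\xi_{\tau_k}\|+|||\xi|||_{p-\text{var},[\tau_k,\tau_{k+1}]}\le 2\|\xi_{\tau_k}\|.
\]
The main obstacle is this absorption step. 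With three levels, the bookkeeping of $\xi''$ and of the two remainders must close in a single seminorm using only $C^3_b$ regularity of $g$, and the cubic term is exactly why the threshold uses the $\frac{1}{3}$ power.

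\textbf{Step 3 (concatenation).} Inducting over $k$ gives $\|\xi_{\tau_k}\|\le 2^k\|\xi_a\|$, so
\[
\|\xi\|_{\infty,[a,b]}\le 2^{\bar N}\|\xi_a\| = \|\xi_a\|\,e^{(\log 2)\bar N_{[a,b]}(\mathbf{B}^{H,\lambda})},
\]
which is the first claimed estimate.

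For \eqref{eq4.2} we use the superadditivity of $p$-variation over the greedy intervals in the form
\[
|||\cdot|||_{p-\text{var},[a,b]}\le \bar N^{\frac{p-1}{p}}\sum_k |||\cdot|||_{p-\text{var},[\tau_k,\tau_{k+1}]}.
\]
This is Hölder's inequality applied to the control, and the analogous bound holds for the $\frac p2$- and $\frac p3$-variations of the remainders. Combining it with
\[
\sum_{k=0}^{\bar N-1} \|\xi_{\tau_k}\|\le \|\xi_a\|\sum_{k=0}^{\bar N-1} 2^k=\|\xi_a\|\big(2^{\bar N}-1\big)
\]
gives
\[
|||\xi,R^{\xi\sharp},R^{\xi\sharp\sharp}|||_{p-\text{var},[a,b]}\le \|\xi_a\|\,\bar N^{\frac{p-1}{p}}\big(e^{(\log 2)\bar N}-1\big)\le \|\xi_a\|\,\bar N^{\frac{p-1}{p}}e^{(\log 2)\bar N}-\|\xi_a\|,
\]
using $\bar N\ge 1$. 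This is \eqref{eq4.2}.
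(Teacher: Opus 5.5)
Your overall architecture is the same as the paper's Step 2: take the difference of the two solutions, apply the $p$-variation sewing estimate, absorb on greedy intervals with threshold $\min\{\frac{1}{12\Theta_3},(\frac{1}{12\Theta_3})^{1/3}\}$, get the factor $2^{\bar N}$ by induction and $\bar N^{\frac{p-1}{p}}$ by H\"older. However, you skip the ingredient the paper spends its whole Step 1 on, and without it your one-interval inequality is not justified. The bounds for the controlled structure of $\xi=\bar y-y$ are \emph{not} linear in $\|\xi\|_{\infty}+|||\xi,R^{\xi\sharp},R^{\xi\sharp\sharp}|||_{p-\text{var}}$ with coefficients depending only on $|||\mathbf{B}^{H,\lambda}|||$ and $g$. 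Two examples:
\begin{itemize}
\item The sewing estimate for $\int_s^t[g(\bar y_u)-g(y_u)]\,dB^{H,\lambda}_u$ needs $|||[g(\bar y)-g(y)]''|||_{p-\text{var},[s,t]}$. This is only bounded by $12C_g^3|||\xi|||_{p-\text{var},[s,t]}+6C_g^3\|\xi\|_{\infty,[s,t]}(|||y|||_{p-\text{var},[s,t]}+|||\bar y|||_{p-\text{var},[s,t]})$, cf.\ \eqref{eq4.40}.
\item $R^{(g(\bar y)-g(y))\sharp}$ produces terms like $C_g\|\xi\|_{\infty}|||R^{\bar y\sharp}|||_{\frac{p}{3}-\text{var},[s,t]^2}$ and $|||\bar y|||^2_{p-\text{var},[s,t]}|||\xi|||_{p-\text{var},[s,t]}|||B^{H,\lambda;1}|||_{p-\text{var},[s,t]}$, cf.\ \eqref{eq4.33}.
\end{itemize}
These cross terms carry the variation norms of the two solutions themselves, and nothing in your proposal controls them.

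The paper first proves an a priori bound for an arbitrary solution of \eqref{eq4.1}, see \eqref{eq4.20}--\eqref{eq4.24}. It then builds $|||y,R^{y\sharp},R^{y\sharp\sharp}|||$ and $|||\bar y,R^{\bar y\sharp},R^{\bar y\sharp\sharp}|||$ (the latter also squared) into $\Theta_3$. That is why $\Theta_3$ in \eqref{eq4.42-2}, and hence the threshold in \eqref{eq4.3}, depends on the solutions. It is not ``the constant built from $C_p$ and $\|g\|_{\infty,C_g}$'' as you assert. As written, you neither prove your Step 2 inequality nor work with the $\Theta_3$ of the statement.

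There are two ways to close the gap. You can follow the paper and insert the a priori bounds for $y,\bar y$ into $\Theta_3$. Alternatively, first run your absorption argument on $y_{s,t}=\int_s^t g(y_u)\,dB^{H,\lambda}_u$ itself to obtain a local estimate $|||y,R^{y\sharp},R^{y\sharp\sharp}|||_{p-\text{var},[s,t]}\le C|||\mathbf{B}^{H,\lambda}|||_{p-\text{var},[s,t]}$ whenever $|||\mathbf{B}^{H,\lambda}|||_{p-\text{var},[s,t]}\le\delta_0(g,p)$. Then your cross terms really are polynomial in $|||\mathbf{B}^{H,\lambda}|||$, and you get a solution-independent threshold. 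That is a slightly cleaner result than the paper's, but you must then say explicitly that your $\Theta_3$ differs from \eqref{eq4.42-2}.

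Separately, the H\"older concatenation $|||\cdot|||_{[a,b]}\le\bar N^{\frac{p-1}{p}}\sum_k|||\cdot|||_{[\tau_k,\tau_{k+1}]}$ does not carry over verbatim to the two-parameter remainders, because they are not additive. For $s<u<t$ one has $R^{\xi\sharp\sharp}_{s,t}=R^{\xi\sharp\sharp}_{s,u}+R^{\xi\sharp\sharp}_{u,t}+(\xi''_u-\xi''_s)B^{H,\lambda;1}_{u,t}$, so partitions straddling the $\tau_k$ pick up cross terms. The paper passes over this in the same way in \eqref{eq4.24}. In either version, the exact form of \eqref{eq4.2} for the remainder components needs an extra argument.
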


\begin{proof}
The proof proceeds in two steps.

\noindent \textbf{Step 1:} Recall from \eqref{eq0-2.15} that
\begin{align}\label{eq4.4}
&\bigg\| \int_s^t g(y_u) db^{H,\lambda}_{u} - g(y_s) \otimes B^{H,\lambda;1}_{s,t} -
[g(y)]'_s B^{H,\lambda;2}_{s,t}
-[g(y)]''_s B^{H,\lambda;3}_{s,t}
\bigg\| \nonumber\\
&\leq C_\alpha |t - s|^{4\alpha} \bigg( ||| B^{H,\lambda;1} |||_{\alpha,[s,t]} ||| R^{g(y)\sharp} |||_{3\alpha,[s,t]^2} +
||| R^{g(y)\sharp \sharp} |||_{2\alpha,[s,t]^2}
||| B^{H,\lambda;2}|||_{2\alpha,[s,t]^2}
\nonumber\\
&~~~~~~~+
||| [g(y)]'' |||_{\alpha,[s,t]} ||| B^{H,\lambda;3} |||_{3\alpha,[s,t]^2} \bigg).
\end{align}
It follows that $y$ is controlled by $b^{H,\lambda}$ with $y' = g(y)$. In view of \eqref{eq0-2.11}, we have
\begin{align}\label{eq4.5}
g(y_t) - g(y_s) &=
\int_0^1 Dg(y_s + \eta y_{s,t}) y_{s,t} d\eta \nonumber\\
&=Dg(y_s) y'_s \otimes B^{H,\lambda;1}_{s,t}+
Dg(y_s) y''_s \otimes B^{H,\lambda;2}_{s,t}
\nonumber\\
&+\int_0^1 [ Dg(y_s + \eta y_{s,t}) - Dg(y_s) ]
y'_s \otimes B^{H,\lambda;1}_{s,t} d\eta
\nonumber\\
&+\int_0^1 [ Dg(y_s + \eta y_{s,t}) - Dg(y_s) ]
y''_s \otimes B^{H,\lambda;2}_{s,t} d\eta
\nonumber\\
&+\int_0^1 Dg(y_s + \eta y_{s,t}) R^{y\sharp}_{s,t} d\eta
\nonumber\\
&=Dg(y_s) y'_s \otimes B^{H,\lambda;1}_{s,t}+
Dg(y_s) y''_s \otimes B^{H,\lambda;2}_{s,t}
+D^{2}g(y_s)(y'_s)^{2}B^{H,\lambda;2}_{s,t}
\nonumber\\
&+\int_0^1 [ Dg(y_s + \eta y_{s,t}) - Dg(y_s) ]
y''_s \otimes B^{H,\lambda;2}_{s,t} d\eta
\nonumber\\
&+\int_0^1 Dg(y_s + \eta y_{s,t}) R^{y\sharp}_{s,t} d\eta
\nonumber\\
&+\int_0^1\int_0^1\int_0^1
D^{3}g(y_s+\xi\eta\tau y_{s,t})\xi\eta y_{s,t}
d\tau \eta y_{s,t}d\xi y'_s \otimes B^{H,\lambda;1}_{s,t} d\eta
\nonumber\\
&+\frac{1}{2}D^{2}g(y_s)y''_sy'_s
B^{H,\lambda;2}_{s,t}\otimes
B^{H,\lambda;1}_{s,t}
+\frac{1}{2}D^{2}g(y_s)R^{y\sharp}_{s,t}
y'_s\otimes B^{H,\lambda;1}_{s,t},
\end{align}
where we have used
\begin{align}\label{eq4.5-1}
&\int_0^1 [ Dg(y_s + \eta y_{s,t}) - Dg(y_s) ]
y'_s \otimes B^{H,\lambda;1}_{s,t} d\eta
\nonumber\\
&=\int_0^1\int_0^1D^{2}g
(y_s + \eta \xi y_{s,t})\eta y_{s,t}d\xi
y'_s\otimes B^{H,\lambda;1}_{s,t} d\eta\nonumber\\
&=\int_0^1\int_0^1[
D^{2}g(y_s + \xi\eta y_{s,t}) - D^{2}g(y_s) ]
\eta y_{s,t}d\xi y'_s\otimes B^{H,\lambda;1}_{s,t}d\eta
+\frac{1}{2}D^{2}g(y_s)y_{s,t}y'_s \otimes B^{H,\lambda;1}_{s,t}
\nonumber\\
&=\int_0^1\int_0^1[
D^{2}g(y_s + \xi\eta y_{s,t}) - D^{2}g(y_s) ]
\eta y_{s,t}d\xi y'_s\otimes B^{H,\lambda;1}_{s,t}d\eta
+D^{2}g(y_s)(y'_s)^{2}B^{H,\lambda;2}_{s,t}
\nonumber\\
&~~~~+\frac{1}{2}D^{2}g(y_s)y''_s
y'_s B^{H,\lambda;2}_{s,t}\otimes B^{H,\lambda;1}_{s,t}
+\frac{1}{2}D^{2}g(y_s)R^{y\sharp}_{s,t}y'_s\otimes B^{H,\lambda;1}_{s,t}.
\end{align}
Then by \eqref{eq0-2.11}, we obtain that
\begin{align}\label{eq4.6}
R^{g(y)\sharp}_{s,t}&=
\int_0^1 \int_0^1
D^{2}g(y_s + \eta\xi y_{s,t})\eta y_{s,t}d\xi
y''_s \otimes B^{H,\lambda;2}_{s,t} d\eta
\nonumber\\
&+\int_0^1 Dg(y_s + \eta y_{s,t}) R^{y\sharp}_{s,t} d\eta
\nonumber\\
&+\int_0^1\int_0^1[
D^{2}g(y_s + \xi\eta y_{s,t}) - D^{2}g(y_s) ]
\eta y_{s,t}d\xi y'_s\otimes B^{H,\lambda;1}_{s,t}d\eta
\nonumber\\
&+\frac{1}{2}D^{2}g(y_s)y''_sy'_s
B^{H,\lambda;2}_{s,t}\otimes B^{H,\lambda;1}_{s,t}
+\frac{1}{2}D^{2}g(y_s)R^{y\sharp}_{s,t}
y'_s\otimes B^{H,\lambda;1}_{s,t}.
\end{align}
Using $y'_{s} = g(y_{s})$ and $y''_{s}=[ g(y) ]'_s = Dg(y_s)g(y_s)$, we estimate
\begin{align}\label{eq4.7}
\| R^{g(y)\sharp}_{s,t} \|
&\leq C_{g}^{3}
\int_0^1\int_0^1 \eta \|y_{s,t}\|\|B^{H,\lambda;2}_{s,t}\| d\xi d\eta
+C_{g}\int_0^1 \|R^{y\sharp}_{s,t}\| d\eta
+\frac{1}{2}C_{g}^{4}\|B^{H,\lambda;2}_{s,t}\|
\|B^{H,\lambda;1}_{s,t}\|\nonumber\\
&+C_{g}^{2}
\|y_{s,t}\|\| B^{H,\lambda;1}_{s,t} \|
+\frac{1}{2}C_{g}^{2}\|R^{y\sharp}_{s,t}\|
\|B^{H,\lambda;1}_{s,t}\|\nonumber\\
&\leq
\frac{1}{2}C_{g}^{3}\|y_{s,t}\|\|B^{H,\lambda;2}_{s,t}\|
+C_{g}\|R^{y\sharp}_{s,t}\|
+\frac{1}{2}C_{g}^{4}\|B^{H,\lambda;2}_{s,t}\|
\|B^{H,\lambda;1}_{s,t}\|
\nonumber\\
&
+C_{g}^{2}
\| B^{H,\lambda;1}_{s,t} \|\|y_{s,t}\|
+\frac{1}{2}C_{g}^{2}|||B^{H,\lambda;1}|||_{p-\text{var},[s,t]}
\|R^{y\sharp}_{s,t}\|
.
\end{align}
This, together with the H\"older inequality and the Minkowski inequality, yields
\begin{align}\label{eq4.9}
&||| R^{g(y)\sharp} |||_{\frac{p}{3}-\text{var},[s,t]^2}
\nonumber\\
&\leq
\frac{1}{2}C_{g}^{3}|||y|||_{p-\text{var},[s,t]}
|||B^{H,\lambda;2}|||_{\frac{p}{2}-\text{var},[s,t]^{2}}
+C_{g}
|||R^{y\sharp}|||_{\frac{p}{3}-\text{var},[s,t]^{2}}
\nonumber\\
&+C_{g}^{2}(t-s)^{\frac{2}{p}}
||| B^{H,\lambda;1} |||_{p-\text{var},[s,t]}
|||y|||_{p-\text{var},[s,t]}
+\frac{1}{2}C_{g}^{4}
|||B^{H,\lambda;2}|||_{\frac{p}{2}-\text{var},[s,t]^{2}}
|||B^{H,\lambda;1}|||_{p-\text{var},[s,t]}\nonumber\\
&
+\frac{1}{2}C_{g}^{2}
|||B^{H,\lambda;1}|||_{p-\text{var},[s,t]}
|||R^{y\sharp}|||_{\frac{p}{3}-\text{var},[s,t]^{2}}
.
\end{align}
\begin{align}\label{eq4.8}
||| [g(y)]' |||_{p-\text{var},[s,t]} \leq 2 C_g^2 ||| y |||_{p-\text{var},[s,t]}, \quad \| [g(y)]' \|_{\infty,[s,t]} \leq C_g^2.
\end{align}
Due to
\begin{align}\label{eq4.10}
[g(y_s)]''=[Dg(y_s)]^{2}g(y_s)+D^{2}g(y_s)[g(y_s)]^{2},
\end{align}
by the Minkowski inequality, we have
\begin{align}\label{eq4.11}
|||[g(y)]''|||_{p-\text{var},[s,t]}
&\leq\bigg(
\sup_{\Pi([s,t])} \sum_{i=1}^n
\|[Dg(y_{t_{i+1}})]^{2}g(y_{t_{i+1}})
-[Dg(y_{t_{i}})]^{2}g(y_{t_{i}})
\|^p \bigg)^{1/p}\nonumber\\
&+
\bigg(
\sup_{\Pi([s,t])} \sum_{i=1}^n
\|D^{2}g(y_{t_{i+1}})[g(y_{t_{i+1}})]^{2}
-D^{2}g(y_{t_{i}})[g(y_{t_{i}})]^{2}
\|^p \bigg)^{1/p}\nonumber\\
&\leq 6C_{g}^{3}|||y|||_{p-\text{var},[s,t]}.
\end{align}
It follows from \eqref{eq0-2.11-1} that
\begin{align}\label{eq4.12}
R^{g(y)\sharp\sharp}_{s,t}
&=[g(y)]'_{t}-[g(y)]'_{s}-[g(y)]''_{s}B^{H,\lambda;1}_{s,t}
\nonumber\\
&=Dg(y_{t})g(y_{t})
-Dg(y_{s})g(y_{s})
-[D^{2}g(y_{s})g^{2}(y_{s})
+[Dg(y_{s})]^{2}g(y_{s})
]B^{H,\lambda;1}_{s,t}\nonumber\\
&=[Dg(y_{t})-Dg(y_{s})]g(y_{t})
+Dg(y_{s})[g(y_{t})-g(y_{s})]
\nonumber\\
&-
[D^{2}g(y_{s})g^{2}(y_{s})
+[Dg(y_{s})]^{2}g(y_{s})
]B^{H,\lambda;1}_{s,t}\nonumber\\
&=\int_{0}^{1}
D^{2}g(y_{s}+\eta y_{s,t} )y_{s,t}d\eta g(y_{t})
+ Dg(y_{s})\int_{0}^{1}
Dg(y_{s}+\eta y_{s,t} )y_{s,t}d\eta
\nonumber\\
&-
[D^{2}g(y_{s})g^{2}(y_{s})
+[Dg(y_{s})]^{2}g(y_{s})
]B^{H,\lambda;1}_{s,t}\nonumber\\
&=\int_{0}^{1}
D^{2}g(y_{s}+\eta y_{s,t} )y'_{s}g(y_{t})B^{H,\lambda;1}_{s,t}
-D^{2}g(y_{s})g^{2}(y_{s})B^{H,\lambda;1}_{s,t}d\eta\nonumber\\
&+\int_{0}^{1}
D^{2}g(y_{s}+\eta y_{s,t} )
(y''_s B^{H,\lambda;2}_{s,t}+
    R^{y\sharp}_{s,t})d\eta g(y_{t})
\nonumber\\
&+\int_{0}^{1}
[Dg(y_{s}+\eta y_{s,t} )
-Dg(y_{s})]Dg(y_{s})g(y_{s})
B^{H,\lambda;1}_{s,t}d\eta\nonumber\\
&+\int_{0}^{1}Dg(y_{s}+\eta y_{s,t} )
(y''_s B^{H,\lambda;2}_{s,t}+
    R^{y\sharp}_{s,t})d\eta Dg(y_{s})
\nonumber\\
&=\int_{0}^{1}
[D^{2}g(y_{s}+\eta y_{s,t} )-D^{2}g(y_{s})]
g(y_{t})g(y_{s})B^{H,\lambda;1}_{s,t}d\eta\nonumber\\
&+\int_{0}^{1}D^{2}g(y_{s})g(y_{s})
[g(y_{t} )-g(y_{s})] B^{H,\lambda;1}_{s,t}d\eta\nonumber\\
&+\int_{0}^{1}D^{2}g(y_{s}+\eta y_{s,t} )
(y''_s B^{H,\lambda;2}_{s,t}+
    R^{y\sharp}_{s,t})d\eta g(y_{t})
\nonumber\\
&+\int_{0}^{1}
[Dg(y_{s}+\eta y_{s,t} )
-Dg(y_{s})]Dg(y_{s})g(y_{s})
B^{H,\lambda;1}_{s,t}d\eta\nonumber\\
&+\int_{0}^{1}Dg(y_{s}+\eta y_{s,t} )
(y''_s B^{H,\lambda;2}_{s,t}+
    R^{y\sharp}_{s,t})d\eta Dg(y_{s}),
\end{align}
which leads to
\begin{align}\label{eq4.13}
\|R^{g(y)\sharp\sharp}_{s,t}\|
\leq 2C^{3}_{g}\|y_{s,t}\|\|B^{H,\lambda;1}_{s,t}\|
+2C^{4}_{g}\|B^{H,\lambda;2}_{s,t}\|
+2C^{2}_{g}\|R^{y\sharp}_{s,t}\|,
\end{align}
and thus
\begin{align}\label{eq4.14}
|||R^{g(y)\sharp\sharp}|||_{
\frac{p}{2}-\text{var},[s,t]^{2}}
&\leq 2C^{3}_{g}|||y|||_{
p-\text{var},[s,t]}
|||B^{H,\lambda;1}|||_{p-\text{var},[s,t]}\nonumber\\
&+2C^{4}_{g}
|||B^{H,\lambda;2}|||_{\frac{p}{2}-\text{var},[s,t]^{2}}
+2C^{2}_{g}|||R^{y\sharp}|||
_{\frac{p}{3}-\text{var},[s,t]^{2}}.
\end{align}
In view of \eqref{eq0-2.17}, \eqref{eq4.9}, \eqref{eq4.11} and \eqref{eq4.14}, we deduce the estimate
\begin{align}\label{eq4.15}
\| y_{s,t} \|
&= \bigg\| \int_s^t g(y_u) db^{H,\lambda}_u \bigg\|
\nonumber\\
&\leq
\| g(y_s) \| \| B^{H,\lambda;1}_{s,t} \| +
\| [g(y)]'_s\| \| B^{H,\lambda;2}_{s,t} \|
+
\| [g(y)]''_s \| \| B^{H,\lambda;3}_{s,t} \|
\nonumber\\
&+
C_p \bigg( ||| B^{H,\lambda;1} |||_{p-\text{var},[s,t]} ||| R^{g(y)\sharp}|||_{\frac{p}{3}-\text{var},[s,t]^2} +
||| B^{H,\lambda;2} |||_{\frac{p}{2}-\text{var},[s,t]^{2}} ||| R^{g(y)\sharp\sharp} |||_{\frac{p}{2}-\text{var},[s,t]^2}
\nonumber\\
&+
||| [g(y)]'' |||_{p-\text{var},[s,t]} ||| B^{H,\lambda;3} |||_{\frac{p}{3}-\text{var},[s,t]^2} \bigg)
\nonumber\\
&\leq
C_p\bigg(
\frac{1}{2}C_{g}^{3}|||y|||_{p-\text{var},[s,t]}
|||B^{H,\lambda;2}|||_{\frac{p}{2}-\text{var},[s,t]^{2}}
||| B^{H,\lambda;1} |||_{p-\text{var},[s,t]}
\nonumber\\
&+
C_{g}
|||R^{y\sharp}|||_{\frac{p}{3}-\text{var},[s,t]^{2}}
||| B^{H,\lambda;1} |||_{p-\text{var},[s,t]}
+C_{g}^{2}(t-s)^{\frac{2}{p}}
||| B^{H,\lambda;1} |||^{2}_{p-\text{var},[s,t]}
|||y|||_{p-\text{var},[s,t]}
\nonumber\\
&+\frac{1}{2}C_{g}^{4}
|||B^{H,\lambda;2}|||_{\frac{p}{2}-\text{var},[s,t]^{2}}
|||B^{H,\lambda;1}|||^{2}_{p-\text{var},[s,t]}
+\frac{1}{2}C_{g}^{2}
|||B^{H,\lambda;1}|||^{2}_{p-\text{var},[s,t]}
|||R^{y\sharp}|||_{\frac{p}{3}-\text{var},[s,t]^{2}}
\nonumber\\
&+
2C^{3}_{g}|||y|||_{
p-\text{var},[s,t]}
|||B^{H,\lambda;1}|||_{p-\text{var},[s,t]}
|||B^{H,\lambda;2}|||_{\frac{p}{2}-\text{var},[s,t]^{2}}\nonumber\\
&+2C^{4}_{g}
|||B^{H,\lambda;2}|||^{2}_{\frac{p}{2}-\text{var},[s,t]^{2}}
+2C^{2}_{g}|||R^{y\sharp}|||
_{\frac{p}{3}-\text{var},[s,t]^{2}}
|||B^{H,\lambda;2}|||_{\frac{p}{2}-\text{var},[s,t]^{2}}
\nonumber\\
&+6C_{g}^{3}|||y|||_{p-\text{var},[s,t]}
|||B^{H,\lambda;3}|||_{\frac{p}{3}-\text{var},[s,t]^{2}}
\bigg)\nonumber\\
&+C_{g}|||B^{H,\lambda;1}|||_{p-\text{var},[s,t]}
+ C^{2}_{g}  |||B^{H,\lambda;2}|||_{\frac{p}{2}-\text{var},[s,t]^{2}}
+2C^{3}_{g} |||B^{H,\lambda;3}|||_{\frac{p}{3}-\text{var},[s,t]^{2}}.
\end{align}
Consequently, this leads to the $p$-variation norm
\begin{align}\label{eq4.16}
&|||y|||_{p-\text{var},[s,t]}\nonumber\\
&\leq
C_p\bigg(
\frac{1}{2}C_{g}^{3}|||y|||_{p-\text{var},[s,t]}
|||B^{H,\lambda;2}|||_{\frac{p}{2}-\text{var},[s,t]^{2}}
||| B^{H,\lambda;1} |||_{p-\text{var},[s,t]}
\nonumber\\
&+
C_{g}
|||R^{y\sharp}|||_{\frac{p}{3}-\text{var},[s,t]^{2}}
||| B^{H,\lambda;1} |||_{p-\text{var},[s,t]}
+C_{g}^{2}(t-s)^{\frac{2}{p}}
||| B^{H,\lambda;1} |||^{2}_{p-\text{var},[s,t]}
|||y|||_{p-\text{var},[s,t]}
\nonumber\\
&+\frac{1}{2}C_{g}^{4}
|||B^{H,\lambda;2}|||_{\frac{p}{2}-\text{var},[s,t]^{2}}
|||B^{H,\lambda;1}|||^{2}_{p-\text{var},[s,t]}
+\frac{1}{2}C_{g}^{2}
|||B^{H,\lambda;1}|||^{2}_{p-\text{var},[s,t]^{2}}
|||R^{y\sharp}|||_{\frac{p}{3}-\text{var},[s,t]^{2}}
\nonumber\\
&+
2C^{3}_{g}|||y|||_{
p-\text{var},[s,t]}
|||B^{H,\lambda;1}|||_{p-\text{var},[s,t]}
|||B^{H,\lambda;2}|||_{\frac{p}{2}-\text{var},[s,t]^{2}}\nonumber\\
&+2C^{4}_{g}
|||B^{H,\lambda;2}|||^{2}_{\frac{p}{2}-\text{var},[s,t]^{2}}
+2C^{2}_{g}|||R^{y\sharp}|||
_{\frac{p}{3}-\text{var},[s,t]^{2}}
|||B^{H,\lambda;2}|||_{\frac{p}{2}-\text{var},[s,t]^{2}}
\nonumber\\
&+6C_{g}^{3}|||y|||_{p-\text{var},[s,t]}
|||B^{H,\lambda;3}|||_{\frac{p}{3}-\text{var},[s,t]^{2}}
\bigg)\nonumber\\
&+C_{g}|||B^{H,\lambda;1}|||_{p-\text{var},[s,t]}
+ C^{2}_{g}  |||B^{H,\lambda;2}|||_{\frac{p}{2}-\text{var},[s,t]^{2}}
+2C^{3}_{g} |||B^{H,\lambda;3}|||_{\frac{p}{3}-\text{var},[s,t]^{2}}.
\end{align}
Combining \eqref{eq0-2.11} and \eqref{eq0-2.17} yields,
for $R^{y\sharp}$, the estimate
\begin{align}\label{eq4.17}
&\|R^{y\sharp}_{s,t}\|
=\|y_{s,t}-y'_s B^{H,\lambda;1}_{s,t}-y''_s B^{H,\lambda;2}_{s,t}\|\nonumber\\
&=\bigg\|\int_{s}^{t}
g(y_{u})db^{H,\lambda}_{u}
-g(y_{s})B^{H,\lambda;1}_{s,t}
-[g(y)]'_{s} B^{H,\lambda;2}_{s,t}\bigg\|\nonumber\\
&\leq C_{p}\bigg(
||| B^{H,\lambda;1} |||_{p-\text{var},[s,t]} ||| R^{g(y)\sharp} |||_{\frac{p}{3}-\text{var},[s,t]^2} +
    ||| B^{H,\lambda;2} |||_{\frac{p}{2}-\text{var},[s,t]^2} ||| R^{g(y)\sharp\sharp} |||_{\frac{p}{2}-\text{var},[s,t]^2}
    \nonumber\\
    &+
    ||| [g(y)]''_{s}|||_{p-\text{var},[s,t]}
    ||| B^{H,\lambda;3} |||_{\frac{p}{3}-\text{var},[s,t]^2} \bigg)
+\|[g(y)]''_{s} \| \|B^{H,\lambda;3}_{s,t} \|.
\end{align}
This implies that $|||R^{y\sharp}|||_{\frac{p}{3}-\text{var},[s,t]^2}$ shares the same estimate as $|||y|||_{p-\text{var},[s,t]}$.
By virtue of \eqref{eq0-2.11} and \eqref{eq0-2.11-1}, we find that
\begin{align}\label{eq4.18}
R^{y\sharp \sharp}_{s,t}
&=g(y_{t})-g(y_{s})-Dg(y_{s})g(y_{s})B^{H,\lambda;1}_{s,t}
\nonumber\\
&=\int_{0}^{1}
Dg(y_{s}+\eta y_{s,t})y_{s,t}d\eta-
Dg(y_{s})g(y_{s})B^{H,\lambda;1}_{s,t}\nonumber\\
&=\int_{0}^{1}
[Dg(y_{s}+\eta y_{s,t})-Dg(y_{s})]
g(y_{s})B^{H,\lambda;1}_{s,t}d\eta
\nonumber\\
&+\int_{0}^{1}Dg(y_{s}+\eta y_{s,t})
y''_s B^{H,\lambda;2}_{s,t}d\eta
+\int_{0}^{1}Dg(y_{s}+\eta y_{s,t})
R^{y\sharp}_{s,t}d\eta\nonumber\\
&=\int_{0}^{1}\int_{0}^{1}
D^{2}g(y_{s}+\xi\eta y_{s,t})\eta y_{s,t}d\xi
g(y_{s})B^{H,\lambda;1}_{s,t}d\eta
\nonumber\\
&+\int_{0}^{1}Dg(y_{s}+\eta y_{s,t})
Dg(y_{s})g(y_{s}) B^{H,\lambda;2}_{s,t}d\eta
+\int_{0}^{1}Dg(y_{s}+\eta y_{s,t})
R^{y\sharp}_{s,t}d\eta,
\end{align}
and consequently
\begin{align}\label{eq4.19}
|||R^{y\sharp \sharp}\|_{\frac{p}{2}-\text{var},[s,t]^{2}}
&\leq \frac{1}{2}
C_{g}^{2}
|||y|||_{p-\text{var},[s,t]}
|||B^{H,\lambda;1}_{s,t}|||_{p-\text{var},[s,t]}\nonumber\\
&+C_{g}^{3}|||B^{H,\lambda;2}|||_{\frac{p}{2}-\text{var},[s,t]^{2}}
+C_{g} |||R^{y\sharp}|||_{\frac{p}{3}-\text{var},[s,t]^{2}}.
\end{align}
Finally, combining \eqref{eq4.16} and \eqref{eq4.19}, we conclude that
\begin{align}\label{eq4.20}
& ||| y, R^{y\sharp}, R^{y\sharp\sharp}|||_{p-\text{var},[s,t]}\nonumber\\
&= ||| y |||_{p-\text{var},[s,t]} +
||| R^{y\sharp} |||_{\frac{p}{3}-\text{var},[s,t]^2}
+||| R^{y\sharp\sharp} |||_{\frac{p}{2}-\text{var},[s,t]^2}
\nonumber\\
&\leq\Theta(1+||| y, R^{y\sharp}, R^{y\sharp\sharp}|||_{p-\text{var},[s,t]}
),
\end{align}
where
\begin{align}
\Theta&:=\Theta_{1}
(1+|||\mathbf{B}^{H,\lambda}|||_{p-\text{var},[s,t]}
\vee|||\mathbf{B}^{H,\lambda}|||^{2}_{p-\text{var},[s,t]}
\vee|||\mathbf{B}^{H,\lambda}|||^{3}_{p-\text{var},[s,t]}),
\nonumber
\end{align}
and
\begin{align}
\Theta_{1}:=C_{p}[2C_{g}+(5+2(b-a)^{\frac{2}{p}})
C^{2}_{g}+17C^{3}_{g}+5C^{4}_{g}]
+2C_{g}+3C^{2}_{g}+5C^{3}_{g}.
\nonumber
\end{align}
If $\Theta\leq \frac{1}{2}$ or $|||\mathbf{B}^{H,\lambda}|||_{p-\text{var},[s,t]}\leq \min
\{\frac{1}{2\Theta_{1}}-1, (\frac{1}{2\Theta_{1}}-1)^{\frac{1}{3}}\}$, then
\begin{align}\label{eq4.22}
||| y, R^{y\sharp}, R^{y\sharp\sharp}|||_{p-\text{var},[s,t]}\leq
\frac{\Theta}{1-\Theta}.
\end{align}
By employing an argument similar to that in \cite[Theorem 2.4]{Duc-Hong} and constructing a greedy sequence of stopping times $\bigg\{ \tau_i\bigg( \min
\{\frac{1}{2\Theta_{1}}-1, (\frac{1}{2\Theta_{1}}-1)^{\frac{1}{3}}\}, [a,b], p \bigg) \bigg\}_{i \in \mathbb{N}}$, we conclude that
\begin{align}\label{eq4.23}
\| y \|_{\infty,[a,b]} &\leq \| y_a \| + \frac{\Theta}{1-\Theta} N_{[a,b]}(\mathbf{B}^{H,\lambda}) \nonumber\\
&\leq \| y_a \| + \frac{\Theta}{1-\Theta} \bigg[ 1 + \bigg(
\min\{\frac{1}{2\Theta_{1}}-1, (\frac{1}{2\Theta_{1}}-1)^{\frac{1}{3}}\}\bigg)^{-p}
 \| \mathbf{B}^{H,\lambda} \|^p_{p-\text{var},[a,b]} \bigg],
\end{align}
and
\begin{align}\label{eq4.24}
&||| y, R^{y\sharp}, R^{y\sharp\sharp}|||_{p-\text{var},[s,t]}
\nonumber\\
&\leq N_{[a,b]}^{\frac{p-1}{p}}(\mathbf{B}^{H,\lambda}) \sum_{k=0}^{N_{[a,b]}(\mathbf{B}^{H,\lambda})-1}
||| y, R^{y\sharp}, R^{y\sharp\sharp}|||_{p-\text{var},[\tau_k, \tau_{k+1}]}\nonumber\\
&\leq\frac{\Theta}{1-\Theta}
N_{[a,b]}^{\frac{2p-1}{p}}(\mathbf{B}^{H,\lambda})\nonumber\\
&\leq\frac{\Theta}{1-\Theta}
2^{\frac{p-1}{p}}
\bigg[ 1 + \bigg(
\min\{\frac{1}{2\Theta_{1}}-1, (\frac{1}{2\Theta_{1}}-1)^{\frac{1}{3}}\}\bigg)^{1-2p}
 ||| \mathbf{B}^{H,\lambda} |||^{2p-1}_{p-\text{var},[a,b]} \bigg],
\end{align}
where in the last inequality we have used \eqref{eq0-2.19}.

\noindent \textbf{Step 2:}
Let us consider two solutions, denoted by  $y_t(\mathbf{B}^{H,\lambda}, y_a)$ and $\bar{y}_t(\mathbf{B}^{H,\lambda}, \bar{y}_a)$, which remain within the bounded range $\frac{\Theta}{1-\Theta}
N_{[a,b]}^{\frac{2p-1}{p}}(\mathbf{B}^{H,\lambda})$. The difference  $z_t = \bar{y}_t - y_t$ then satisfies the rough integral equation
\begin{align}\label{eq4.25}
z_t = z_a + \int_a^t  g(\bar{y}_s) - g(y_s) db^{H,\lambda}_s,
\end{align}
with the initial value $z_a=\bar{y}_a - y_a$.
Applying \eqref{eq4.6}, we find that
\begin{align}\label{eq4.26}
R^{(g(\bar{y})-g(y))\sharp}_{s,t}&=
\int_0^1 \int_0^1
D^{2}g(\bar{y}_s + \eta\xi \bar{y}_{s,t})\eta \bar{y}_{s,t}d\xi
\bar{y}''_s \otimes B^{H,\lambda;2}_{s,t} d\eta
\nonumber\\
&-
\int_0^1 \int_0^1
D^{2}g(y_s + \eta\xi y_{s,t})\eta y_{s,t}d\xi
y''_s \otimes B^{H,\lambda;2}_{s,t} d\eta
\nonumber\\
&+
\int_0^1 Dg(\bar{y}_s + \eta \bar{y}_{s,t}) R^{\bar{y}\sharp}_{s,t} d\eta
-
\int_0^1 Dg(y_s + \eta y_{s,t}) R^{y\sharp}_{s,t} d\eta
\nonumber\\
&+\int_0^1\int_0^1[
D^{2}g(\bar{y}_s + \xi\eta \bar{y}_{s,t}) - D^{2}g(\bar{y}_s) ]
\eta \bar{y}_{s,t}d\xi \bar{y}'_s\otimes B^{H,\lambda;1}_{s,t}d\eta
\nonumber\\
&-
\int_0^1\int_0^1[
D^{2}g(y_s + \xi\eta y_{s,t}) - D^{2}g(y_s) ]
\eta y_{s,t}d\xi y'_s\otimes B^{H,\lambda;1}_{s,t}d\eta
\nonumber\\
&+\frac{1}{2}D^{2}g(\bar{y}_s)\bar{y}''_s
\bar{y}'_s
B^{H,\lambda;2}_{s,t}\otimes B^{H,\lambda;1}_{s,t}
-
\frac{1}{2}D^{2}g(y_s)y''_sy'_s
B^{H,\lambda;2}_{s,t}\otimes B^{H,\lambda;1}_{s,t}
\nonumber\\
&
+\frac{1}{2}D^{2}g(\bar{y}_s)R^{\bar{y}\sharp}_{s,t}
\bar{y}'_s\otimes B^{H,\lambda;1}_{s,t}
-
\frac{1}{2}D^{2}g(y_s)R^{y\sharp}_{s,t}
y'_s\otimes B^{H,\lambda;1}_{s,t}\nonumber\\
&:=II_{1}+II_{2}+II_{3}+II_{4}+II_{5}.
\end{align}
Thanks to $y'_{s}=g(y_{s})$ and $y''_{s}=[g(y)]'_s = Dg(y_s)g(y_s)$, we define $Q(y_s):=Dg(y_s)g(y_s)$.
Then we have $\|Q(\bar{y}_s)-Q(y_s)\|\leq 2C_{g}^{2}\|z_{s}\|$ and
\begin{align}\label{eq4.26-1}
||| Q(\bar{y}) - Q(y) |||_{p-\text{var},[s,t]}
\leq 2 C_g^2 \left( ||| z |||_{p-\text{var},[s,t]} + || z ||_{\infty,[s,t]} ||| y |||_{p-\text{var},[s,t]} \right).
\end{align}
It is straightforward to verify that
\begin{align}\label{eq4.27}
II_{1}&=
\int_0^1 \int_0^1
\bigg(D^{2}g(\bar{y}_s + \eta\xi \bar{y}_{s,t})
-D^{2}g(y_s + \eta\xi y_{s,t})\bigg)
\eta \bar{y}_{s,t}d\xi
\bar{y}''_s \otimes B^{H,\lambda;2}_{s,t} d\eta
\nonumber\\
&+
\int_0^1 \int_0^1
D^{2}g(y_s + \eta\xi y_{s,t})
\eta(\bar{y}_{s,t}-y_{s,t})d\xi
\bar{y}''_s \otimes B^{H,\lambda;2}_{s,t} d\eta
\nonumber\\
&+
\int_0^1 \int_0^1
D^{2}g(y_s + \eta\xi y_{s,t})
\eta y_{s,t}d\xi
(\bar{y}''_s-y''_s)
\otimes B^{H,\lambda;2}_{s,t} d\eta
\nonumber\\
&=
\int_0^1 \int_0^1\int_0^1
D^{3}g\bigg(
(y_s + \eta\xi y_{s,t})+\tau
((\bar{y}_s + \eta\xi \bar{y}_{s,t})-
(y_s + \eta\xi y_{s,t}))\bigg)d\tau
\nonumber\\
&\times\bigg((\bar{y}_s + \eta\xi \bar{y}_{s,t})-
(y_s + \eta\xi y_{s,t})
\bigg)
\eta \bar{y}_{s,t}d\xi
Dg(\bar{y}_s)g(\bar{y}_s) \otimes B^{H,\lambda;2}_{s,t} d\eta
\nonumber\\
&+
\int_0^1 \int_0^1
D^{2}g(y_s + \eta\xi y_{s,t})
\eta z_{s,t}
d\xi Dg(\bar{y}_s)g(\bar{y}_s)
\otimes B^{H,\lambda;2}_{s,t} d\eta
\nonumber\\
&+
\int_0^1 \int_0^1
D^{2}g(y_s + \eta\xi y_{s,t})
\eta y_{s,t}d\xi
\bigg(Dg(\bar{y}_s)g(\bar{y}_s)
-Dg(y_s)g(y_s)\bigg)
\otimes B^{H,\lambda;2}_{s,t} d\eta,
\end{align}
which yields the estimate
\begin{align}\label{eq4.28}
II_{1}&\leq C^{3}_{g}
\bigg(\frac{1}{2}\|z_{s}\|+
\frac{1}{6}\|z_{s,t}\|\bigg)
\|\bar{y}_{s,t}\|
\|B^{H,\lambda;2}_{s,t}\|
+\frac{1}{2}C^{3}_{g}\|z_{s,t}\|
\|B^{H,\lambda;2}_{s,t}\|\nonumber\\
&
+\frac{1}{2}C_{g}\|y_{s,t}\|
\|Q(\bar{y}_s)-Q(y_s)\|\|B^{H,\lambda;2}_{s,t}\|.
\end{align}
Similarly, we deduce that
\begin{align}\label{eq4.29}
II_{2}&\leq
C_{g}\|z_{s}\|\|R_{s,t}^{\bar{y}\sharp}\|
+\frac{1}{2}C_{g}
\|z_{s,t}\|\|R_{s,t}^{\bar{y}\sharp}\|
+C_{g}\|R_{s,t}^{z\sharp}\|,
\end{align}
\begin{align}\label{eq4.30}
II_{3}&\leq
C^{2}_{g}\|\bar{y}_{s,t}\|
\|B^{H,\lambda;1}_{s,t}\|
\|z\|_{\infty}
+
C^{2}_{g}\|y_{s,t}\|
\|B^{H,\lambda;1}_{s,t}\|
\|z\|_{\infty}
\nonumber\\
&+C^{2}_{g}\|z_{s,t}\|\|B^{H,\lambda;1}_{s,t}\|
+\frac{1}{6}C^{2}_{g}
\|\bar{y}_{s,t}\|\|B^{H,\lambda;1}_{s,t}\|
\|z_{s,t}\|,
\end{align}
\begin{align}\label{eq4.31}
II_{4}&\leq
C^{4}_{g}\|z_{s}\|
\|B^{H,\lambda;2}_{s,t}\|\|B^{H,\lambda;1}_{s,t}\|
+\frac{1}{2}C^{2}_{g}
\|Q(\bar{y}_{s})-Q(y_{s})\|
\|B^{H,\lambda;2}_{s,t}\|\|B^{H,\lambda;1}_{s,t}\|,
\end{align}
and
\begin{align}\label{eq4.32}
II_{5}&\leq
\frac{1}{2}C^{2}_{g}
\|B^{H,\lambda;1}_{s,t}\|
\|R_{s,t}^{\bar{y}\sharp}\|\|z_{s}\|
+\frac{1}{2}C^{2}_{g}
\|B^{H,\lambda;1}_{s,t}\|
\|R_{s,t}^{z\sharp}\|
+\frac{1}{2}C^{2}_{g}
\|R_{s,t}^{y\sharp}\|
\|B^{H,\lambda;1}_{s,t}\|\|z_{s}\|,
\end{align}
Using \eqref{eq4.26} and \eqref{eq4.28}-\eqref{eq4.32} results in
\begin{align}\label{eq4.33}
&|||R^{(g(\bar{y})-g(y))\sharp}|||
_{\frac{p}{3}-\text{var},[s,t]^{2}}\leq C^{3}_{g}|||z|||
_{p-\text{var},[s,t]}
|||\bar{y}|||
_{p-\text{var},[s,t]}
|||B^{H,\lambda;2}|||
_{\frac{p}{2}-\text{var},[s,t]^{2}}
\nonumber\\
&+\frac{1}{2}C^{3}_{g}
|||z|||
_{p-\text{var},[s,t]}
|||B^{H,\lambda;2}|||
_{\frac{p}{2}-\text{var},[s,t]^{2}}
+C^{3}_{g}
|||y|||
_{p-\text{var},[s,t]}
|||B^{H,\lambda;2}|||
_{\frac{p}{2}-\text{var},[s,t]^{2}}
||z||_{\infty}
\nonumber\\
&+C_{g}\|z\|_{\infty}
|||R^{\bar{y}\sharp}|||
_{\frac{p}{3}-\text{var},[s,t]^{2}}
+\frac{1}{2}C_{g}
|||z|||
_{p-\text{var},[s,t]}
|||R^{\bar{y}\sharp}|||
_{\frac{p}{3}-\text{var},[s,t]^{2}}+
C_{g}
|||R^{z\sharp}|||
_{\frac{p}{3}-\text{var},[s,t]^{2}}
\nonumber\\
&+
C^{2}_{g}(t-s)^{\frac{2}{p}}
|||\bar{y}|||_{p-\text{var},[s,t]}
||z||_{\infty}
|||B^{H,\lambda;1}|||
_{p-\text{var},[s,t]}
\nonumber\\
&+C^{2}_{g}(t-s)^{\frac{2}{p}}
|||z|||
_{p-\text{var},[s,t]}
|||B^{H,\lambda;1}|||
_{p-\text{var},[s,t]}
\nonumber\\
&+C^{2}_{g}(t-s)^{\frac{2}{p}}
||z||_{\infty}
|||y|||
_{p-\text{var},[s,t]}
|||B^{H,\lambda;1}|||
_{p-\text{var},[s,t]}
\nonumber\\
&+\frac{1}{6}C^{2}_{g}
|||\bar{y}|||^{2}
_{p-\text{var},[s,t]}
|||z|||
_{p-\text{var},[s,t]}
|||B^{H,\lambda;1}|||
_{p-\text{var},[s,t]}
\nonumber\\
&+C^{4}_{g}
\|z\|_{\infty}
|||B^{H,\lambda;1}|||
_{p-\text{var},[s,t]}
|||B^{H,\lambda;2}|||
_{\frac{p}{2}-\text{var},[s,t]^{2}}
\nonumber\\
&+
2C^{4}_{g}
|||B^{H,\lambda;1}|||
_{p-\text{var},[s,t]}
|||B^{H,\lambda;2}|||
_{\frac{p}{2}-\text{var},[s,t]^{2}}
\|z\|_{\infty}
\nonumber\\
&+
\frac{1}{2}C^{2}_{g}
|||B^{H,\lambda;1}|||
_{p-\text{var},[s,t]}
|||R^{\bar{y}\sharp}|||
_{\frac{p}{3}-\text{var},[s,t]^{2}}\|z\|_{\infty}
\nonumber\\
&+\frac{1}{2}C^{2}_{g}
|||B^{H,\lambda;1}|||
_{p-\text{var},[s,t]}
|||R^{z\sharp}|||
_{\frac{p}{3}-\text{var},[s,t]^{2}}
\nonumber\\
&+\frac{C^{2}_{g}}{2}
|||R^{y\sharp}|||
_{\frac{p}{3}-\text{var},[s,t]^{2}}
|||B^{H,\lambda;1}|||
_{p-\text{var},[s,t]}
\|z\|_{\infty}.
\end{align}
Note from \eqref{eq4.12} that
\begin{align}\label{eq4.34}
R^{(g(\bar{y})-g(y))\sharp\sharp}_{s,t}
&=
\int_{0}^{1}
[D^{2}g(\bar{y}_{s}+\eta \bar{y}_{s,t} )-D^{2}g(\bar{y}_{s})]
g(\bar{y}_{t})g(\bar{y}_{s})B^{H,\lambda;1}_{s,t}d\eta\nonumber\\
&-
\int_{0}^{1}
[D^{2}g(y_{s}+\eta y_{s,t} )-D^{2}g(y_{s})]
g(y_{t})g(y_{s})B^{H,\lambda;1}_{s,t}d\eta\nonumber\\
&+D^{2}g(\bar{y}_{s})g(\bar{y}_{s})
[g(\bar{y}_{t} )-g(\bar{y}_{s})] B^{H,\lambda;1}_{s,t}
-
D^{2}g(y_{s})g(y_{s})
[g(y_{t} )-g(y_{s})] B^{H,\lambda;1}_{s,t}\nonumber\\
&+\int_{0}^{1}D^{2}g(\bar{y}_{s}+\eta \bar{y}_{s,t} )
(\bar{y}''_s B^{H,\lambda;2}_{s,t}+
    R^{\bar{y}\sharp}_{s,t})d\eta g(\bar{y}_{s})
\nonumber\\
&-
\int_{0}^{1}D^{2}g(y_{s}+\eta y_{s,t} )
(y''_s B^{H,\lambda;2}_{s,t}+
    R^{y\sharp}_{s,t})d\eta g(y_{s})
\nonumber\\
&+\int_{0}^{1}
[Dg(\bar{y}_{s}+\eta \bar{y}_{s,t} )
-Dg(\bar{y}_{s})]Dg(\bar{y}_{s})g(\bar{y}_{s})
B^{H,\lambda;1}_{s,t}d\eta
\nonumber\\
&-
\int_{0}^{1}
[Dg(y_{s}+\eta y_{s,t} )
-Dg(y_{s})]Dg(y_{s})g(y_{s})
B^{H,\lambda;1}_{s,t}d\eta\nonumber\\
&+
\int_{0}^{1}Dg(\bar{y}_{s}+\eta \bar{y}_{s,t} )
(\bar{y}''_s B^{H,\lambda;2}_{s,t}+
    R^{\bar{y}\sharp}_{s,t})d\eta Dg(\bar{y}_{s})
\nonumber\\
&-
\int_{0}^{1}Dg(y_{s}+\eta y_{s,t} )
(y''_s B^{H,\lambda;2}_{s,t}+
    R^{y\sharp}_{s,t})d\eta Dg(y_{s})\nonumber\\
&:=II_{6}+II_{7}+II_{8}+II_{9}+II_{10}.
\end{align}
In a similar way as \eqref{eq4.27}, we estimate
\begin{align}\label{eq4.35}
II_{6}\leq 4C^{3}_{g}
\|B^{H,\lambda;1}_{s,t}\|\|z_{s}\|
+2C^{3}_{g}
\|B^{H,\lambda;1}_{s,t}\|\|z_{t}\|
+
\frac{1}{2}C^{3}_{g}\|B^{H,\lambda;1}_{s,t}\|\|z_{s,t}\|
\end{align}
\begin{align}\label{eq4.35}
II_{7}\leq 3C^{3}_{g}
\|z_{s}\|\|\bar{y}_{s,t}\|\|B^{H,\lambda;1}_{s,t}\|
+C^{3}_{g}\|B^{H,\lambda;1}_{s,t}\|z_{s,t}\|
(\frac{1}{2}\|\bar{y}_{s,t}\|+1),
\end{align}
\begin{align}\label{eq4.36}
II_{8}+II_{10}&\leq
2C^{2}_{g}(\|z_{s}\|+\frac{1}{2}\|z_{s,t}\|)
(C^{2}_{g}\|B^{H,\lambda;2}_{s,t}\|+
\|R^{\bar{y}\sharp}_{s,t}\|)\nonumber\\
&
+2C^{2}_{g}(2C^{2}_{g}\|z_{s}\|\|B^{H,\lambda;2}_{s,t}\|
+\|R^{(\bar{y}-y)\sharp}_{s,t}\|
)
+2C^{2}_{g}(C^{2}_{g}\|B^{H,\lambda;2}_{s,t}\|+
\|R^{y\sharp}_{s,t}\|
)\|z_{s}\|,
\end{align}
and
\begin{align}\label{eq4.37}
II_{9}
&\leq
C^{3}_{g}\|\bar{y}_{s,t}\|\|B^{H,\lambda;1}_{s,t}\|
(\frac{1}{2}\|z\|_{\infty}+\frac{1}{6}\|z_{s,t}\|)
\nonumber\\
&+\frac{1}{2}C^{3}_{g}\|z_{s,t}\|\|B^{H,\lambda;1}_{s,t}\|
+C^{3}_{g}\|y_{s,t}\|\|B^{H,\lambda;1}_{s,t}\|
\|z\|_{\infty}.
\end{align}
Therefore, we conclude that
\begin{align}\label{eq4.37-200}
&|||R^{(g(\bar{y})-g(y))\sharp\sharp}|||
_{\frac{p}{2}-\text{var},[s,t]^{2}}\nonumber\\
&\leq
8C^{3}_{g}
|||z|||_{p-\text{var},[s,t]}
|||B^{H,\lambda;1}|||_{p-\text{var},[s,t]}
+
4C^{3}_{g}
|||\bar{y}|||
_{p-\text{var},[s,t]}
|||B^{H,\lambda;1}|||_{p-\text{var},[s,t]}
\|z\|_{\infty}
\nonumber\\
&
+C^{3}_{g}|||\bar{y}|||_{p-\text{var},[s,t]}
|||B^{H,\lambda;1}|||_{p-\text{var},[s,t]}
|||z|||_{p-\text{var},[s,t]}
+
8C^{4}_{g}\|z\|_{\infty}|||B^{H,\lambda;2}|||
_{\frac{p}{2}-\text{var},[s,t]^{2}}
\nonumber\\
&+2C^{2}_{g}
\|z\|_{\infty}|||R^{\bar{y}\sharp}|||
_{\frac{p}{3}-\text{var},[s,t]^{2}}
+
C^{4}_{g}
|||z|||_{p-\text{var},[s,t]}
|||B^{H,\lambda;2}|||
_{\frac{p}{2}-\text{var},[s,t]^{2}}
\nonumber\\
&+C^{2}_{g}
|||z|||_{p-\text{var},[s,t]}
|||R^{\bar{y}\sharp}|||
_{\frac{p}{3}-\text{var},[s,t]^{2}}
+2C^{2}_{g}
|||R^{z\sharp}|||
_{\frac{p}{3}-\text{var},[s,t]^{2}}
\nonumber\\
&+
2C^{2}_{g}
|||R^{y\sharp}|||
_{\frac{p}{3}-\text{var},[s,t]^{2}}
\|z\|_{\infty}
+C_g^3
|||y|||_{p-\text{var},[s,t]}
|||B^{H,\lambda;1}|||_{p-\text{var},[s,t]}
\|z\|_{\infty}
.
\end{align}
Notice that
\begin{align}\label{eq4.39}
[g(\bar{y})-g(y)]''_{s,t}
&=\bigg[
\bigg(Dg(\bar{y}_t)Dg(\bar{y}_t)g(\bar{y}_t)
+D^{2}g(\bar{y}_t)g^{2}(\bar{y}_t)\bigg)
\nonumber\\
&~~-
\bigg(Dg(y_t)Dg(y_t)g(y_t)
+D^{2}g(y_t)g^{2}(y_t)\bigg)\bigg]
\nonumber\\
&-
\bigg[
\bigg(Dg(\bar{y}_s)Dg(\bar{y}_s)g(\bar{y}_s)
+D^{2}g(\bar{y}_s)g^{2}(\bar{y}_s)\bigg)
\nonumber\\
&~~-
\bigg(Dg(y_s)Dg(y_s)g(y_s)
+D^{2}g(y_s)g^{2}(y_s)\bigg)\bigg].
\end{align}
Following an argument similar to that used in \eqref{eq4.27}, we obtain the estimate
\begin{align}\label{eq4.40}
&|||[g(\bar{y})-g(y)]''|||
_{p-\text{var},[s,t]}\nonumber\\
&\leq 12C^{3}_{g}||| z |||_{p-\text{var},[s,t]}
+6C^{3}_{g}
\|z\|_{\infty}
(|||\bar{y}|||_{p-\text{var},[s,t]}
+|||y|||_{p-\text{var},[s,t]}).
\end{align}
Recalling \eqref{eq0-2.17}, we observe that
\begin{align}\label{eq4.41}
\|z_{s,t}\|&=\bigg\|\int_s^t  g(\bar{y}_u) - g(y_u) db_u^{H,\lambda}\bigg\|\nonumber\\
&\leq C_p \bigg( ||| B^{H,\lambda;1} |||_{p-\text{var},[s,t]}
||| R^{(g(\bar{y}) - g(y))\sharp} |||_{\frac{p}{3}-\text{var},[s,t]^2} \nonumber\\
&
+
||| B^{H,\lambda;2} |||_{\frac{p}{2}-\text{var},[s,t]^2}
||| R^{(g(\bar{y}) - g(y))\sharp\sharp} |||_{\frac{p}{2}-\text{var},[s,t]^2}
\nonumber\\
&+
||| [g(\bar{y})-g(y)]'' |||_{p-\text{var},[s,t]}
||| B^{H,\lambda;3} |||_{\frac{p}{3}-\text{var},[s,t]^2} \bigg)\nonumber\\
&+\|g(\bar{y}_{s})-g(y_{s})\|\|B^{H,\lambda;1}_{s,t}\|
+\|[g(\bar{y})-g(y)]'_{s}\|\|B^{H,\lambda;2}_{s,t}\|
+\|[g(\bar{y})-g(y)]''_{s}\|\|B^{H,\lambda;3}_{s,t}\|.
\end{align}
Substituting \eqref{eq4.33}, \eqref{eq4.37-200} and \eqref{eq4.40} into \eqref{eq4.41} yields
\begin{align}\label{eq4.42}
||| z |||_{p-\text{var},[s,t]}
&\leq \Theta_{2}
\bigg[ \| z \|_{\infty,[s,t]}
+||| z, R^{z\sharp},R^{z\sharp\sharp}|||_{p-\text{var},[s,t]}
\bigg]\nonumber\\
&\leq 2\Theta_{2}
\bigg[ \| z_{s} \|
+||| z, R^{z\sharp},R^{z\sharp\sharp}|||_{p-\text{var},[s,t]}
\bigg],
\end{align}
where
\begin{align}\label{eq4.42-1}
\Theta_{2}&:=\Theta_{3}
(||| \mathbf{B}^{H,\lambda} |||_{p-\text{var},[s,t]}
\vee ||| \mathbf{B}^{H,\lambda} |||^{2}_{p-\text{var},[s,t]}
\vee ||| \mathbf{B}^{H,\lambda} |||^{3}_{p-\text{var},[s,t]} ),
\end{align}
\begin{align}\label{eq4.42-2}
\Theta_{3}&:=
MC_{p}[C_{g}+(1+(t-s)^{\frac{2}{p}})
C^{2}_{g}+C^{3}_{g}+C^{4}_{g}]
\bigg[1+||| y, R^{y\sharp},R^{y\sharp\sharp}|||_{p-\text{var},[s,t]}
\nonumber\\
&
+||| \bar{y}, R^{\bar{y}\sharp},R^{\bar{y}\sharp\sharp}
|||_{p-\text{var},[s,t]}
+||| \bar{y}, R^{\bar{y}\sharp},R^{\bar{y}\sharp\sharp}
|||_{p-\text{var},[s,t]}^{2}
\bigg]
\end{align}
for some positive integer $M$,
and
\begin{align}
&\max\bigg\{||| y, R^{y\sharp}, R^{y\sharp\sharp}|||_{p-\text{var},[s,t]}
\vee
||| \bar{y}, R^{\bar{y}\sharp},R^{\bar{y}\sharp\sharp}
|||_{p-\text{var},[s,t]}\bigg\}
\nonumber\\
&\leq\frac{\Theta}{1-\Theta}
2^{\frac{p-1}{p}}
\bigg[ 1 + \bigg(
\min\{\frac{1}{2\Theta_{1}}-1, (\frac{1}{2\Theta_{1}}-1)^{\frac{1}{3}}\}\bigg)^{1-2p}
 ||| \mathbf{B}^{H,\lambda} |||^{2p-1}_{p-\text{var},[a,b]} \bigg],
 \nonumber
\end{align}
due to \eqref{eq4.24}.
From \eqref{eq0-2.11} and \eqref{eq0-2.15}, we observe that $\| R^{z\sharp} \|_{q-\text{var},[s,t]}$ admits an estimate similar to that of $||| z |||_{p-\text{var},[s,t]}$.
For $R^{z\sharp\sharp}$, we have
\begin{align}\label{eq4.43}
R^{z\sharp\sharp}_{s,t}
&=\|z'_{s,t}-z''_{s}B^{H,\lambda;1}_{s,t}\|=
\|(\bar{y}-y)'_{s,t}-(\bar{y}-y)''_{s}B^{H,\lambda;1}_{s,t}\|
\nonumber\\
&=\|(g(\bar{y})-g(y))_{s,t}-(g(\bar{y})-g(y))
'_{s}B^{H,\lambda;1}_{s,t}\|\nonumber\\
&=\|(g(\bar{y})-g(y))''_{s}B^{H,\lambda;2}_{s,t}
+R^{(g(\bar{y})-g(y))\sharp}_{s,t}\|
\nonumber\\
&\leq \|(g(\bar{y})-g(y))''_{s}\|\|B^{H,\lambda;2}_{s,t}\|
+\|R^{(g(\bar{y})-g(y))\sharp}_{s,t}\|.
\end{align}
Following an argument analogous to \eqref{eq4.39}, we obtain that
\begin{align}\label{eq4.44}
\|(g(\bar{y})-g(y))''_{s}\|
\leq 6C_{g}^{3}\|z_{s}\|,
\end{align}
which, together with \eqref{eq4.33}, yields the estimate for  $|||R^{z\sharp\sharp}|||
_{\frac{p}{2}-\text{var},[s,t]^{2}}$. Consequently, we conclude that
\begin{align}\label{eq4.45}
||| z, R^{z\sharp},R^{z\sharp\sharp}|||_{p-\text{var},[s,t]}
&\leq 6\Theta_{2}
\bigg[ \| z_{s} \|
+||| z, R^{z\sharp},R^{z\sharp\sharp}|||_{p-\text{var},[s,t]}
\bigg].
\end{align}
When $12\Theta_{3}
(||| \mathbf{B}^{H,\lambda} |||_{p-\text{var},[s,t]}
\vee ||| \mathbf{B}^{H,\lambda} |||^{2}_{p-\text{var},[s,t]}
\vee ||| \mathbf{B}^{H,\lambda} |||^{3}_{p-\text{var},[s,t]} )\leq 1$, then
\begin{align}\label{eq4.46}
||| z, R^{z\sharp},R^{z\sharp\sharp}|||_{p-\text{var},[s,t]}
\leq \| z_{s} \|.
\end{align}
Therefore, \eqref{eq4.2} is followed directly from the usage of the greedy sequence
of stopping times \eqref{eq4.3}, which is similar to \eqref{eq4.23} and \eqref{eq4.24}. In particular
$\bar{N}$ can be estimated by
\begin{align}\label{eq4.47}
\bar{N}_{[a,b]}(\mathbf{B}^{H,\lambda}) &\leq 1 +
\left[\min\bigg\{\frac{1}{12\Theta_{3}}, \bigg(\frac{1}{12\Theta_{3}}\bigg)^{\frac{1}{3}}\bigg\}\right]^{-p}
||| \mathbf{B}^{H,\lambda} |||^{p}_{p-\text{var},[a,b]},
\end{align}
where $\Theta_{3}$ is given by  \eqref{eq4.42-2}.
\end{proof}

\begin{theorem}
The solution $y_t(\mathbf{B}^{H,\lambda}, y_a)$ of \eqref{eq4.1} is differentiable w.r.t. initial
condition $y_a$, moreover, its derivatives $\frac{\partial y_t}{\partial y_a}(\mathbf{B}^{H,\lambda}, y_a)$ is the matrix solution of
the linearized rough differential equation
\begin{align}\label{eq4.48}
    d\xi_t = Dg(y_t) \xi_t dB^{H,\lambda}
\end{align}
\end{theorem}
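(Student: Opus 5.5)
Fix a direction $v\in\mathbb{R}^d$ and a small $h\neq 0$, and study the difference quotient
\[
\xi^h_t:=\frac{y_t(\mathbf{B}^{H,\lambda},y_a+hv)-y_t(\mathbf{B}^{H,\lambda},y_a)}{h}.
\]
The goal is to show that it converges, in the controlled-path norm $|||\cdot,R^{\cdot\sharp},R^{\cdot\sharp\sharp}|||_{p-\text{var},[a,b]}$ together with the sup norm, to the solution $\xi_t$ of the linear equation \eqref{eq4.48} with $\xi_a=v$.

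The first step is well-posedness of \eqref{eq4.48} itself. Its coefficient $A_t:=Dg(y_t)$ is a controlled path: since $g\in C^3_b$ and $(y,g(y),Dg(y)g(y))$ is controlled by $\mathbf{B}^{H,\lambda}$, the path $Dg(y)$ is controlled with Gubinelli derivatives $D^2g(y)g(y)$ and a second-order term built from $D^3g$ and $D^2g$. This is exactly where $C^3_b$ regularity is needed. The linear equation $\xi_t=v+\int_a^t A_s\xi_s\,dB^{H,\lambda}_s$ is then solved locally by the same fixed-point and a priori scheme as in Step 1 of the Proposition. On each greedy interval $[\tau_k,\tau_{k+1}]$ of \eqref{eq4.3} the bound $12\Theta_3(\cdots)\le1$ gives $|||\xi,R^{\xi\sharp},R^{\xi\sharp\sharp}|||_{p-\text{var},[\tau_k,\tau_{k+1}]}\le\|\xi_{\tau_k}\|$. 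Concatenating the intervals yields a global solution with the same exponential-in-$\bar N$ bound as in \eqref{eq4.2}. Linearity in $v$ then produces the matrix solution $\Phi_t$ with $\Phi_a=I$.

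The second step derives the equation for the quotient. Writing $\bar y=y(\cdot,y_a+hv)$ and $z=\bar y-y$, we have
\[
g(\bar y_t)-g(y_t)=\int_0^1Dg(y_t+\eta z_t)\,d\eta\; z_t,
\]
so $\xi^h$ solves the linear RDE $d\xi^h_t=A^h_t\xi^h_t\,dB^{H,\lambda}_t$ with $A^h_t=\int_0^1Dg(y_t+\eta z_t)\,d\eta$ and $\xi^h_a=v$. The Proposition gives $\|z\|_{\infty}+|||z,R^{z\sharp},R^{z\sharp\sharp}|||\le C|h|$, with $C$ depending only on $\bar N_{[a,b]}$ and $\|v\|$. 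It follows that $A^h$ converges to $A$ as a controlled path, with
\[
\|A^h-A\|_{\infty}+|||A^h-A,R^{(A^h-A)\sharp},R^{(A^h-A)\sharp\sharp}|||_{p-\text{var}}\le C|h|.
\]
The estimates here mirror those for $II_1$–$II_{10}$ in Step 2, now applied to $Dg$ in place of $g$ and using $\|D^3g\|_\infty$.

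The third step is the stability estimate, which I expect to be the main obstacle. Set $w=\xi^h-\xi$. It satisfies
\[
w_t=\int_a^t A_s w_s\,dB^{H,\lambda}_s+\int_a^t (A^h_s-A_s)\xi^h_s\,dB^{H,\lambda}_s,
\]
and the rough integral estimate \eqref{eq0-2.17} must be applied to products of controlled paths. In particular one has to compute the second Gubinelli derivative and the remainders $R^{\sharp},R^{\sharp\sharp}$ of $(A^h-A)\xi^h$ and bound them by $C|h|$ times norms of $\xi^h$; the uniform boundedness of those norms comes from the first step applied to $A^h$. The resulting local inequality has the form
\[
|||w,R^{w\sharp},R^{w\sharp\sharp}|||_{[\tau_k,\tau_{k+1}]}\le 6\Theta_2\bigl(\|w_{\tau_k}\|+|||w,R^{w\sharp},R^{w\sharp\sharp}|||_{[\tau_k,\tau_{k+1}]}\bigr)+C|h|.
\]
On greedy intervals, where $6\Theta_2\le\frac12$, this gives $\|w_{\tau_{k+1}}\|\le 2\|w_{\tau_k}\|+C|h|$. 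A discrete Gronwall iteration over the at most $\bar N$ intervals then yields $\|w\|_{\infty,[a,b]}\le C|h|\,2^{\bar N}\to0$. Hence $\partial y_t/\partial y_a\,v=\Phi_tv$.

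Finally, continuity of $y_a\mapsto\Phi_t$ follows from the same stability argument, since $A$ depends continuously on $y_a$ by the Proposition. Combined with the existence of all directional derivatives, this gives (Fréchet) differentiability and identifies the derivative as the matrix solution of \eqref{eq4.48}.
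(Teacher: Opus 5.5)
This is essentially the paper's proof: your $w=\xi^h-\xi$ is exactly the paper's remainder $r=\bar y-y-\xi$ divided by $h$, and your forcing term $(A^h-A)\xi^h$ is the paper's $e'/h$. Your greedy-time Gronwall bound $\|w\|_{\infty,[a,b]}\le C|h|\,2^{\bar N}$ is uniform over unit $v$, so it is just the quadratic estimate \eqref{eq4.82-1}; Fr\'echet differentiability therefore follows directly, and your final detour through continuity of $\Phi$ is unnecessary.

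One caveat, which the paper's Step 1 shares: because $\alpha<\frac13$, making $Dg(y)$ a level-three controlled path (second Gubinelli derivative $\alpha$-H\"older, $3\alpha$-remainder) needs $D^3g$ to be Lipschitz, e.g.\ $g\in C^4_b$. The rate-$|h|$ bound for $A^h\to A$ in that norm needs more still, so $C^3_b$ is not ``exactly'' the regularity required.
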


\begin{proof}
The proof proceeds in several steps.

\noindent \textbf{Step 1:}  For a fixed solution $y_t(\mathbf{B}^{H,\lambda}, y_a)$ on $[a,b]$, we first establish the existence and uniqueness of the solution to the linearized rough differential equation \eqref{eq4.48}, whose time-dependent coefficient is given by $\Sigma_t := Dg(y_t)$.
To this end, we follow Gubinelli's approach by analyzing the solution mapping $H_t = \xi_a + \int_a^t \Sigma_s \xi_s dB^{H,\lambda}_s$ on the set $\mathcal{D}^{2\alpha}([a,b], \xi_a)$ of the controlled
paths $\xi_t$ with the fixed initial conditions $\xi_a$. Note that $\Sigma_t =
Dg(y_t)$ is also controlled by $B^{H,\lambda}$ with
\begin{align}
&\Sigma_t-\Sigma_s=D g(y_{t})-D g(y_{s})
\nonumber\\
&=\int_{0}^{1}
D^{2}g(y_{s}+\eta y_{s,t})(y'_{s}B^{H,\lambda;1}_{s,t}+
y''_{s}B^{H,\lambda;2}_{s,t}+R_{s,t}^{y\sharp})d\eta.\nonumber
\end{align}
Due to
\begin{align}
\Sigma'_s=D^{2}g(y_{s})g(y_{s}),\nonumber
\end{align}
and
\begin{align}
\Sigma''_s=D^{3}g(y_{s})g^{2}
(y_{s})+D^{2}g(y_{s})Dg(y_{s})g(y_{s}),\nonumber
\end{align}
in light of \eqref{eq0-2.11}, we obtain that
\begin{align}
\|R_{s,t}^{\Sigma\sharp}\|
&\leq \frac{1}{6}C_{g}^{3}\|y_{s,t}\|\|B^{H,\lambda;1}_{s,t}\|
\|B^{H,\lambda;1}_{s,t}\|
+\frac{1}{4}C_{g}^{4}\|B^{H,\lambda;1}_{s,t}\|
\|B^{H,\lambda;2}_{s,t}\|\nonumber\\
&+\frac{1}{2}C_{g}^{2}\|R_{s,t}^{y\sharp}\|\|B^{H,\lambda;1}_{s,t}\|
+\frac{1}{2}C_{g}^{3}\|y_{s,t}\|\|B^{H,\lambda;2}_{s,t}\|
+C_{g}\|R_{s,t}^{y\sharp}\|.\nonumber
\end{align}
Furthermore, $\Sigma_t \xi_t$ is also controlled by $B^{H,\lambda}$ with
\[[\Sigma.\xi.]'_s = \Sigma'_s \xi_s + \Sigma_s \xi_s',\]
\[[\Sigma.\xi.]''_s = \Sigma''_s \xi_s + 2\Sigma'_s \xi'_s+ \Sigma_s \xi''_s,\]
and
\begin{align}
\|R_{s,t}^{(\Sigma\xi)\sharp}\| &\leq
\|\xi_s\| \|R_{s,t}^{\Sigma\sharp}\| +
\|\Sigma_s\| \|R_{s,t}^{\xi\sharp}\|+
\|R_{s,t}^{\Sigma\sharp}\| \|\xi_{s,t}\|\nonumber\\
&+
C_{g}^{2}\|B^{H,\lambda;1}_{s,t}
\|(\|\xi''_s\|\|B^{H,\lambda;2}_{s,t}\|+
R_{s,t}^{\xi\sharp})
+2C_{g}^{3}\|B^{H,\lambda;2}_{s,t}\|\|\xi_{s,t}\|.\nonumber
\end{align}
In view of \eqref{eq0-2.15}, it holds that
\begin{align}
&\| H_{s,t} - \Sigma_s \xi_s \otimes B^{H,\lambda;1}_{s,t} -
[\Sigma.\xi.]'_s B^{H,\lambda;2}_{s,t}
-[\Sigma.\xi.]''_s B^{H,\lambda;3}_{s,t}\| \nonumber\\
&\leq C_\alpha (t-s)^{4\alpha}
\bigg( |||B^{H,\lambda;1}|||_{\alpha,[s,t]} |||R^{(\Sigma\xi)\sharp}|||_{3\alpha,[s,t]}
+ |||B^{H,\lambda;2}|||_{2\alpha,[s,t]}
|||R^{(\Sigma\xi)\sharp\sharp}|||_{2\alpha,[s,t]}
\nonumber\\
&+
|||[\Sigma.\xi.]''|||_{\alpha,[s,t]}
|||B^{H,\lambda;3}|||_{3\alpha,[s,t]}
\bigg),\nonumber
\end{align}
where
\begin{align}
|||[\Sigma.\xi.]'|||_{\alpha,[s,t]}
&\leq ||\Sigma'||_{\infty}|||\xi|||_{\alpha,[s,t]}
+|||\Sigma'|||_{\alpha,[s,t]}
||\xi||_{\infty}\nonumber\\
&+||\Sigma||_{\infty}
|||\xi'|||_{\alpha,[s,t]}
+|||\Sigma|||_{\alpha,[s,t]}
||\xi'||_{\infty},\nonumber
\end{align}
\begin{align}
|||R^{(\Sigma\xi)\sharp}|||_{3\alpha,[s,t]}
&\leq ||\xi||_{\infty}
|||R^{\Sigma\sharp}|||_{3\alpha,[s,t]}
+||\Sigma||_{\infty}
|||R^{\xi\sharp}|||_{3\alpha,[s,t]}\nonumber\\
&+C_{g}^{2}|||B^{H,\lambda;1}|||_{\alpha,[s,t]}
(||\xi''||_{\infty}
|||B^{H,\lambda;2}|||_{2\alpha,[s,t]}
+(t-s)^{\alpha}|||R^{\xi\sharp}|||_{3\alpha,[s,t]})
\nonumber\\
&+2C_{g}^{3}|||B^{H,\lambda;2}|||_{2\alpha,[s,t]}
|||\xi|||_{\alpha,[s,t]}
+|||R^{\Sigma\sharp}|||_{3\alpha,[s,t]}
|||\xi|||_{\alpha,[s,t]}(t-s)^{\alpha},\nonumber
\end{align}
\begin{align}
||\xi'||_{\infty}\leq
||\Sigma||_{\infty}
||\xi_{a}||+(t-s)^{\alpha}|||\xi'|||_{\alpha,[s,t]},\nonumber
\end{align}
\begin{align}
|||\xi|||_{\alpha,[s,t]}\leq
||\Sigma||_{\infty}
||\xi_{a}|| |||B^{H,\lambda;1}|||_{\alpha,[s,t]}
+(t-s)^{\alpha}(|||B^{H,\lambda;1}|||_{\alpha,[s,t]}\vee (t-s)^{\alpha})
|||(\xi, \xi')|||_{2\alpha,[s,t]},\nonumber
\end{align}
\begin{align}
|||\xi'|||_{\alpha,[s,t]}\leq
|||\Sigma|||_{\alpha,[s,t]}||\xi||_{\infty}
+||\Sigma||_{\infty}|||\xi|||_{\alpha,[s,t]},\nonumber
\end{align}
\begin{align}
||\xi||_{\infty}
&\leq ||\xi_{a}||+(t-s)^{\alpha}
(||\Sigma||_{\infty}||\xi_{a}||
|||B^{H,\lambda;1}|||_{\alpha,[s,t]}\nonumber\\
&+(t-s)^{\alpha}(|||B^{H,\lambda;1}|||_{\alpha,[s,t]}\vee (t-s)^{\alpha})
|||(\xi, \xi')|||_{2\alpha,[s,t]}),\nonumber
\end{align}
\begin{align}
||\xi''||_{\infty}
\leq \|\Sigma'\|_{\infty}\|\xi\|_{\infty}
+\|\Sigma\|_{\infty}\|\xi'\|_{\infty},\nonumber
\end{align}
and
\begin{align}
|||\xi'|||_{\alpha,[s,t]}
\leq ||\xi''||_{\infty}|||B^{H,\lambda;1}|||_{\alpha,[s,t]}
+|||R^{\xi\sharp\sharp}|||_{2\alpha,[s,t]}
(t-s)^{\alpha}.\nonumber
\end{align}
Consequently, a direct computation shows that
\begin{align}
|||R^{(\Sigma\xi)\sharp}|||_{3\alpha,[s,t]}
&\leq \bigg[
||\xi_{a}||+(t-s)^{\alpha}
\bigg((t-s)^{\alpha}(|||B^{H,\lambda;1}|||_{\alpha,[s,t]}\vee (t-s)^{\alpha})
|||(\xi, \xi', \xi'')|||_{2\alpha,[s,t]}
\nonumber\\
&+
\|\Sigma\|_{\infty}
||\xi_{a}||
|||B^{H,\lambda;1}|||_{\alpha,[s,t]}
\bigg)\bigg]
|||R^{\Sigma\sharp}|||_{3\alpha,[s,t]}
+\|\Sigma\|_{\infty}|||R^{\Sigma\sharp}|||_{3\alpha,[s,t]}
\nonumber\\
&+C_{g}^{2}|||B^{H,\lambda;1}|||_{\alpha,[s,t]}
\bigg[\|\Sigma'\|_{\infty}
\bigg(||\xi_{a}||+(t-s)^{\alpha}\bigg(\|\Sigma\|_{\infty}
||\xi_{a}|||||B^{H,\lambda;1}|||_{\alpha,[s,t]}
\nonumber\\
&+(t-s)^{\alpha}(|||B^{H,\lambda;1}|||_{\alpha,[s,t]}\vee (t-s)^{\alpha})
|||(\xi, \xi', \xi'')|||_{2\alpha,[s,t]}\bigg)\bigg)
+\|\Sigma\|_{\infty}(\|\Sigma\|_{\infty}||\xi_{a}||
\nonumber\\
&+
(t-s)^{\alpha}|||\xi'|||_{\alpha,[s,t]})\bigg]
|||B^{H,\lambda;2}|||_{2\alpha,[s,t]}
+C_{g}^{2}|||B^{H,\lambda;1}|||_{\alpha,[s,t]}(t-s)^{\alpha}
|||R^{\xi\sharp}|||_{3\alpha,[s,t]}\nonumber\\
&+2C_{g}^{3}|||B^{H,\lambda;2}|||_{2\alpha,[s,t]}
|||\xi|||_{\alpha,[s,t]}
+|||R^{\Sigma\sharp}|||_{3\alpha,[s,t]}
|||\xi|||_{\alpha,[s,t]}
(t-s)^{\alpha},\nonumber
\end{align}
\begin{align}
|||R^{(\Sigma\xi)\sharp\sharp}|||_{2\alpha,[s,t]}
&\leq \|\Sigma'\|_{\infty}
(\|\xi''\|_{\infty}|||B^{H,\lambda;2}|||_{2\alpha,[s,t]}
+(t-s)^{\alpha}|||R^{\xi\sharp}|||_{3\alpha,[s,t]}
)\nonumber\\
&
+(\|\Sigma''\|_{\infty}|||B^{H,\lambda;2}|||_{2\alpha,[s,t]}+
|||R^{\Sigma\sharp}|||_{3\alpha,[s,t]})\|\xi'\|_{\infty}
\nonumber\\
&+|||\Sigma|||_{\alpha,[s,t]}
(\|\xi''\|_{\infty}|||B^{H,\lambda;1}|||_{\alpha,[s,t]}
\nonumber\\
&+(t-s)^{\alpha}|||R^{\xi\sharp\sharp}|||_{2\alpha,[s,t]})
+\|\Sigma\|_{\infty}|||R^{\xi\sharp\sharp}|||_{2\alpha,[s,t]},
\nonumber
\end{align}
\begin{align}
|||(\Sigma\xi)''|||_{\alpha,[s,t]}&\leq
|||\Sigma''|||_{\alpha,[s,t]}\|\xi\|_{\infty}
+||\Sigma''||_{\infty}|||\xi|||_{\alpha,[s,t]}\nonumber\\
&+2|||\Sigma'|||_{\alpha,[s,t]}\|\xi'\|_{\infty}
+2||\Sigma'||_{\infty}|||\xi|||_{\alpha,[s,t]}
\nonumber\\
&+|||\Sigma|||_{\alpha,[s,t]}
\|\xi''\|_{\infty}
+||\Sigma||_{\infty}|||\xi''|||_{\alpha,[s,t]}
+||\Sigma_{s}|||||\xi''|||_{\alpha,[s,t]},\nonumber
\end{align}
and
\begin{align}
||(\Sigma\xi)''||_{\infty}\leq
||\Sigma''||_{\infty}
\|\xi\|_{\infty}
+2||\Sigma'||_{\infty}
\|\xi'\|_{\infty}
+||\Sigma||_{\infty}
\|\xi''\|_{\infty}.\nonumber
\end{align}
Therefore,
\begin{align}
||||H'|||_{\alpha,[s,t]}\leq ||\Sigma||_{\infty}  |||\xi|||_{\alpha,[s,t]}
+||\xi||_{\infty}  |||\Sigma|||_{\alpha,[s,t]}.\nonumber
\end{align}
Notice that
\begin{align}
R^{H\sharp\sharp}_{s,t}
=\Sigma_{t}\xi_{t}-\Sigma_{s}\xi_{s}-
(\Sigma'_{s}\xi_{s}+\Sigma_{s}\xi'_{s})B^{H,\lambda;1}_{s,t}
=(\Sigma_{s}\xi_{s})''B^{H,\lambda;2}_{s,t}+
R^{(\Sigma\xi)\sharp}_{s,t}.\nonumber
\end{align}
As a result, we conclude that there exist constants
\[
M_1 = M_1(\Sigma, [a,b], C_{g}, B^{H,\lambda;1}, B^{H,\lambda;2}, B^{H,\lambda;3})\]
and
\[M_2 = M_2(\Sigma, [a,b], C_{g}, B^{H,\lambda;1}, B^{H,\lambda;2}, B^{H,\lambda;3}),
\]
such that
\begin{align}\label{eq4.68}
&|||H''|||_{\alpha,[s,t]} + |||R^{H\sharp}|||_{3\alpha ,[s,t]}
+|||R^{H\sharp\sharp}|||_{2\alpha,[s,t]}\nonumber\\
&\leq M_2\|\xi_\alpha\|+ M_1
\bigg[(t-a)^\alpha + |||B^{H,\lambda;1}|||_{\alpha,[s,t]}+ |||B^{H,\lambda;2}|||_{2\alpha,[s,t]}\nonumber\\
&+ |||B^{H,\lambda;3}|||_{3\alpha,[s,t]}\bigg]
|||(\xi,\xi',\xi'')|||_{2\alpha,[s,t]}.
\end{align}
This implies that on every interval $[\bar{\tau}_i,\bar{\tau}_{i+1}]$ of the greedy sequence of stopping times
$\{\bar{\tau}_i(\frac{1}{2M_1},I,\alpha)\}_{i\in\mathbb{N}}$ as in \eqref{eq0-2.18},
the solution mapping is a contraction from the set
\[
\bigg\{\mathcal{D}^{2\alpha}([a,b],\xi_a):
|||(\xi,\xi',\xi'')|||_{2\alpha,[\bar{\tau}_i,\bar{\tau}_{i+1}]}\leq 2M_2\|\xi_{\bar{\tau}_i}\|\bigg\}
\]
into itself. Consequently, a solution to \eqref{eq4.48} exists on each interval $[\bar{\tau}_i,\bar{\tau}_{i+1}]$. By linearity, it follows from estimate \eqref{eq4.68} with
$\|\xi_a-\bar{\xi}_a\|=0$ that $\|(\xi-\bar{\xi},\xi'-\bar{\xi}', \xi''-\bar{\xi}'')\|_{2\alpha}=0$,
which establishes uniqueness. Finally, concatenating the solutions over the intervals
$[\bar{\tau}_i,\bar{\tau}_{i+1}]$ yields the existence and uniqueness of the solution to \eqref{eq4.48} on $[a,b]$.

\noindent\textbf{Step 2:}
Let $\Phi(t,B^{H,\lambda},y_a)$ denote the solution matrix of the linearized system
\eqref{eq4.48}. Then $\xi_t = \Phi(t,B^{H,\lambda},y_a)(\bar{y}_a - y_a)$ is the solution of \eqref{eq4.48} with initial condition  $\xi_a = \bar{y}_a - y_a$. Define $r_t:=\bar{y}_t - y_t - \xi_t$, then $r_a = 0$ and
\begin{align}\label{eq4.69}
r_t
&= \int_a^t \int_0^1 \bigl[ Dg(y_s + \eta(\bar{y}_s - y_s)) - Dg(y_s) \bigr] (\bar{y}_s - y_s) d\eta dB^{H,\lambda}_s + \int_a^t Dg(y_s) r_s dB^{H,\lambda}_s, \nonumber\\
&= e_{a,t} + \int_a^t Dg(y_s) r_s dB^{H,\lambda}_s,
\end{align}
where
\[
e_{a,t} = \int_a^t \int_0^1 \bigl[ Dg(y_s + \eta(\bar{y}_s - y_s)) - Dg(y_s) \bigr] (\bar{y}_s - y_s) d\eta dB^{H,\lambda}_s,
\]
and $e$ is also controlled by $B^{H,\lambda}$ with $e_{a,a}=0$.
We are going to estimate $\|r\|_{\infty,[a,b]}$
and $\|r,R^{r\sharp},R^{r\sharp\sharp}\|_{p-\mathrm{var},[a,b]}$. We observe that
\begin{align}
r_{s,t}&=e_{s,t}+\int_{s}^{t}Dg(y_{u})r_{u}dB^{H,\lambda}_{u}\nonumber\\
&=e'_{s}\otimes x_{s,t}+e''_{s}\otimes B^{H,\lambda;2}_{s,t}+R^{e\sharp}_{s,t}
+\int_{s}^{t}Dg(y_{u})r_{u}dB^{H,\lambda}_{u}.\nonumber
\end{align}
Notice that $r'_{s}=e'_{s}+Dg(y_{s})r_{s}$ and
\begin{align}
r''_{s}
&=\int_{0}^{1}\int_{0}^{1}D^{3}g(y_{s}+\eta \xi (\bar{y}_{s}-y_{s}))
[g(y_{s})+\eta \xi (g(\bar{y}_{s})-g(y_{s}))]\eta (\bar{y}_{s}-y_{s})^{2}d\xi d\eta\nonumber\\
&+\int_{0}^{1}\int_{0}^{1}
D^{2}g(y_{s}+\eta \xi (\bar{y}_{s}-y_{s}))
\eta (g(\bar{y}_{s})-g(y_{s})) (\bar{y}_{s}-y_{s}) d\xi d\eta
\nonumber\\
&+ Dg(y_{s})\int_{0}^{1}[Dg(y_{s}+\eta(\bar{y}_{s}-y_{s}) )-
Dg(y_{s})](\bar{y}_{s}-y_{s}) d\eta
\nonumber\\
&+D^{2}g(y_{s})g(y_{s})r_{s}+Dg(y_{s})g(y_{s})r_{s},\nonumber
\end{align}
then
\begin{align}
||R^{r\sharp}_{s,t}||&\leq ||R^{e\sharp}_{s,t}||
+\|[Dg(y_{s}r_{s})]''\|\|B^{H,\lambda;3}_{s,t}\|\nonumber\\
&+C_p \bigg( ||| B^{H,\lambda} |||_{p-\text{var},[s,t]}
||| R^{(Dg(y)r)\sharp} \|_{\frac{p}{3}-\text{var},[s,t]^2}
\nonumber\\
&+||| B^{H,\lambda;2} |||_{\frac{p}{2}-\text{var},[s,t]}
||| R^{(Dg(y)r)\sharp\sharp} \|_{\frac{p}{2}-\text{var},[s,t]^2}
\nonumber\\
&
+||| (Dg(y)r)'' \|_{p-\text{var},[s,t]} ||| B^{H,\lambda;3} |||_{\frac{p}{3}-\text{var},[s,t]^2} \bigg).\nonumber
\end{align}
It is easy to see that
\begin{align}
&\|Dg(y_{t})r_{t}-Dg(y_{s})r_{s}-[Dg(y)r]'_{s}B^{H,\lambda;1}_{s,t}
-[Dg(y)r]''_{s}B^{H,\lambda;2}_{s,t}\|
\nonumber\\
&\leq \frac{1}{3}C_{g}^{3}\|y_{s,t}\|\|B^{H,\lambda;2}_{s,t}\|\|r_{s}\|
+\frac{1}{2}C_{g}^{2}(C_{g}^{2}\|B^{H,\lambda;2}_{s,t}\|+
\|R^{y\sharp}_{s,t}\|)
\|B^{H,\lambda;1}_{s,t}\|\|r_{s}\|
\nonumber\\
&+\frac{1}{2}C_{g}^{3}\|y_{s,t}\|\|B^{H,\lambda;2}_{s,t}\|\|r_{s}\|
+C_{g}\|R^{y\sharp}_{s,t}\|\|r_{s}\|
+C_{g}\|R^{r\sharp}_{s,t}\|
\nonumber\\
&+C_{g}^{2}\|B^{H,\lambda;1}_{s,t}\|
(\|r''_{s}\|\|B^{H,\lambda;2}_{s,t}\|+\|R^{r\sharp}_{s,t}\|)
+C_{g}(C_{g}^{2}\|B^{H,\lambda;2}_{s,t}\|+
\|R^{y\sharp}_{s,t}\|)\|r_{s,t}\|
\nonumber\\
&+C_{g}^{2} \|y_{s,t}\|
\|B^{H,\lambda;2}_{s,t}\| \|r'_{s}\|.\nonumber
\end{align}
In addition, we deduce that
\begin{align}
&|||R^{(Dg(y)r)\sharp}|||_{\frac{p}{3}-\mathrm{var}}\nonumber\\
&\leq C_{g}^{3}|||y|||_{p-\mathrm{var}}
|||B^{H,\lambda;2}|||_{\frac{p}{2}-\mathrm{var}}
\|r\|_{\infty}
+C_{g}|||R^{y\sharp}|||_{\frac{p}{3}-\mathrm{var}}\|r\|_{\infty}
+C_{g}|||R^{r\sharp}|||_{\frac{p}{3}-\mathrm{var}}
\nonumber\\
&
+\frac{1}{2}C_{g}^{2}(C_{g}^{2}
|||B^{H,\lambda;2}|||_{\frac{p}{2}-\mathrm{var}}
+|||R^{y\sharp}|||_{\frac{p}{3}-\mathrm{var}})
|||B^{H,\lambda;1}|||_{p-\mathrm{var}}\|r\|_{\infty}
\nonumber\\
&+C_{g}^{2}|||B^{H,\lambda;1}|||_{p-\mathrm{var}}
(\|r'\|_{\infty}|||B^{H,\lambda;2}|||_{\frac{p}{2}-\mathrm{var}}+
|||R^{r\sharp}|||_{\frac{p}{3}-\mathrm{var}})
\nonumber\\
&+C_{g}|||r|||_{p}(C_{g}^{2}
|||B^{H,\lambda;2}|||_{\frac{p}{2}-\mathrm{var}}
+|||R^{y\sharp}|||_{\frac{p}{3}-\mathrm{var}})
+C_{g}^{2} |||y|||_{p-\mathrm{var}}
|||B^{H,\lambda;2}|||_{\frac{p}{2}-\mathrm{var}}\|r'\|_{\infty},
\nonumber
\end{align}
and
\begin{align}
&|||R^{(Dg(y)r)\sharp\sharp}|||_{\frac{p}{2}-\mathrm{var}}
\nonumber\\
&\leq 2C_{g}^{3}|||y|||_{p-\mathrm{var}}
|||B^{H,\lambda;1}|||_{p-\mathrm{var}}\|r\|_{\infty}\nonumber\\
&+2C_{g}^{2}\|r\|_{\infty}
(C_{g}^{2}|||B^{H,\lambda;2}|||_{\frac{p}{2}-\mathrm{var}}+
|||R^{y\sharp}|||_{\frac{p}{3}-\mathrm{var}})\nonumber\\
&+\frac{1}{2}C_{g}^{2}|||y|||_{p-\mathrm{var}}
|||B^{H,\lambda;1}|||_{p-\mathrm{var}}\|r'\|_{\infty}
+C_{g}^{2}\|r'\|_{\infty}
(C_{g}^{2}|||B^{H,\lambda;2}|||_{\frac{p}{2}-\mathrm{var}}+
|||R^{y\sharp}|||_{\frac{p}{3}-\mathrm{var}})\nonumber\\
&+2C_{g}^{2}|||y|||_{p-\mathrm{var}}
|||r|||_{p-\mathrm{var}}
+C_{g}^{2}(\|r''\|_{\infty}
|||B^{H,\lambda;2}|||_{\frac{p}{2}-\mathrm{var}}+
|||R^{r\sharp}|||_{\frac{p}{3}-\mathrm{var}})
\nonumber\\
&+C_{g}|||r'|||_{p-\mathrm{var}}|||y|||_{p}+C_{g}
|||R^{r\sharp\sharp}|||_{\frac{p}{2}-\mathrm{var}}.\nonumber
\end{align}
Similarly, we estimate that
\begin{align}
\|[Dg(y)r]''\|_{\infty}\leq 2C_{g}^{3}\|r\|_{\infty}
+2C_{g}^{2}\|r'\|_{\infty}+C_{g}\|r''\|_{\infty},\nonumber
\end{align}
and
\begin{align}
|||[Dg(y)r]''|||_{p-\mathrm{var}}&\leq 6C_{g}^{3}|||y|||_{p-\mathrm{var}}\|r\|_{\infty}
+2C_{g}^{3}|||r|||_{p-\mathrm{var}}+
4C_{g}^{2}|||y|||_{p-\mathrm{var}}\|r'\|_{\infty}
\nonumber\\
&+2C_{g}^{2}|||r'|||_{p-\mathrm{var}}+
C_{g}|||y|||_{p-\mathrm{var}}\|r''\|_{\infty}
+C_{g}|||r''|||_{p-\mathrm{var}}.\nonumber
\end{align}
Combining the above estimates, it follows that there exists a constant $M_{1}>1$ such that
\begin{align}
|||R^{r\sharp}|||_{\frac{p}{3}-\mathrm{var}}
&\leq M_{1}
(|||\mathbf{B}^{H,\lambda}|||_{p-\mathrm{var}}\vee |||\mathbf{B}^{H,\lambda}|||^{2}_{p-\mathrm{var}}\vee |||\mathbf{B}^{H,\lambda}|||^{3}_{p-\mathrm{var}})
(||r_{s}||+|||r, R^{r\sharp}, R^{r\sharp\sharp}|||_{p-\mathrm{var}})\nonumber\\
&+M_{1}(\|e'\|_{\infty, [a,b]}+|||e'|||_{p-\mathrm{var}}+
|||R^{e\sharp}|||_{\frac{p}{3}-\mathrm{var}}+
\|e''\|_{\infty, [a,b]}).\nonumber
\end{align}
Similarly,
\begin{align}
|||r|||_{p}\leq \|e'\|_{\infty}
|||B^{H,\lambda;1}|||_{p-\mathrm{var}}+
\|e''\|_{\infty}|||B^{H,\lambda;2}|||_{\frac{p}{2}-\mathrm{var}}
+C_{g}\|r\|_{\infty}|||B^{H,\lambda;1}|||_{p-\mathrm{var}}+
|||R^{e\sharp}|||_{\frac{p}{3}-\mathrm{var}},
\nonumber
\end{align}
and
\begin{align}
&|||R^{r\sharp\sharp}|||_{\frac{p}{2}-\mathrm{var}}
\nonumber\\
&\leq
|||R^{e\sharp\sharp}|||_{\frac{p}{2}-\mathrm{var}}+
\|r\|_{\infty}
\bigg[2C_{g}^{3}|||B^{H,\lambda;2}|||_{\frac{p}{2}-\mathrm{var}}
+\bigg(\frac{1}{6}C_{g}^{3}|||y|||_{p-\mathrm{var}}
|||B^{H,\lambda;1}|||_{p-\mathrm{var}}^{2}
\nonumber\\
&+\frac{1}{2}C_{g}^{4}|||B^{H,\lambda;2}|||_{\frac{p}{2}-\mathrm{var}}
|||B^{H,\lambda;1}x|||_{p-\mathrm{var}}
+\frac{1}{2}C_{g}^{2}|||R^{y\sharp}|||_{\frac{p}{3}-\mathrm{var}}
|||B^{H,\lambda;1}|||_{p-\mathrm{var}}
\nonumber\\
&+\frac{1}{2}C_{g}^{3}|||y|||_{p-\mathrm{var}}
|||B^{H,\lambda;2}|||_{\frac{p}{2}-\mathrm{var}}
+C_{g}|||R^{y\sharp}|||_{\frac{p}{3}-\mathrm{var}}\bigg)\bigg]
+C_{g}|||y|||_{p-\mathrm{var}}|||r|||_{p-\mathrm{var}}
\nonumber\\
&+C_{g}
|||B^{H,\lambda;2}|||_{\frac{p}{2}-\mathrm{var}}
(\|e''\|_{\infty}+C_{g}^{2}\|r\|_{\infty}
+C_{g}\|e'\|_{\infty}+C_{g}^{2}\|r\|_{\infty}
+C_{g}|||R^{r\sharp}|||_{\frac{p}{3}-\mathrm{var}}).
\nonumber
\end{align}
As a result, we conclude that there exists a constant $M_{2}>1$ such that
\begin{align}
&|||r, R^{r\sharp}, R^{r\sharp\sharp}|||_{p-\mathrm{var}}
\nonumber\\
&\leq
M_{2}
(C_{g}|||\mathbf{B}^{H,\lambda}|||_{p-\mathrm{var}}\vee C_{g}^{2}|||\mathbf{B}^{H,\lambda}|||^{2}_{p-\mathrm{var}}
\vee C_{g}^{3}|||\mathbf{B}^{H,\lambda}|||^{3}_{p-\mathrm{var}})
(||r_{s}||+|||r, R^{r\sharp}, R^{r\sharp\sharp}|||_{p-\mathrm{var}})\nonumber\\
&+M_{2}(\|e'\|_{\infty}+|||e'|||_{p-\mathrm{var}}+
|||R^{e\sharp}|||_{\frac{p}{3}-\mathrm{var}}+\|e''\|_{\infty}+
|||R^{e\sharp\sharp}|||_{\frac{p}{2}-\mathrm{var}}).\nonumber
\end{align}
when $M_{2}C_{g}|||\mathbf{B}^{H,\lambda}|||_{p-\mathrm{var}}\leq \frac{1}{2}$, we have
\begin{align}
|||r, R^{r\sharp}, R^{r\sharp\sharp}|||_{p-\mathrm{var}}
\leq ||r_{s}||+2M_{2}(\|e'\|_{\infty}+|||e'|||_{p-\mathrm{var}}+
|||R^{e\sharp}|||_{\frac{p}{3}-\mathrm{var}}+\|e''\|_{\infty}+
|||R^{e\sharp\sharp}|||_{\frac{p}{2}-\mathrm{var}}).\nonumber
\end{align}
By employing estimates analogous to \eqref{eq4.23} and \eqref{eq4.24}, and utilizing the greedy sequence of stopping times $\{\tau_{k}(\frac{1}{2M_2 C_{g}})\}_{k\in N}$, we obtain
\begin{align}\label{eq4.70}
&\|r\|_{\infty,[a,b]} \vee |||r, R^{r\sharp}, R^{r\sharp\sharp}|||_{p-\mathrm{var},[a,b]} \nonumber\\
&\leq N_{\frac{1}{2M_2 C_g},[a,b]}
(\mathbf{B}^{H,\lambda})^{\frac{p-1}{p}} e^{(\log 2) N_{\frac{1}{2M_2 C_g},[a,b]}(\mathbf{B}^{H,\lambda})}\nonumber\\
&\times \bigg\{ \|r_a\| + M_2 \bigg(
\|e'\|_{\infty, [a,b]}+|||e'|||_{p-\mathrm{var}}+
|||R^{e\sharp}|||_{\frac{p}{3}-\mathrm{var}}+\|e''\|_{\infty, [a,b]}+
|||R^{e\sharp\sharp}|||_{\frac{p}{2}-\mathrm{var}}
\bigg) \bigg\} \nonumber\\
&\leq M_2 \bigg(
\|e'\|_{\infty, [a,b]}+|||e'|||_{p-\mathrm{var}}+
|||R^{e\sharp}|||_{\frac{p}{3}-\mathrm{var}}+\|e''\|_{\infty, [a,b]}+
|||R^{e\sharp\sharp}|||_{\frac{p}{2}-\mathrm{var}}
\bigg)\nonumber\\
&~~\times
e^{(1+\log 2)
N_{\frac{1}{2M_2 C_g},[a,b]}(\mathbf{B}^{H,\lambda})}
,
\end{align}
where we use the fact that $r_a = 0$.

\noindent\textbf{Step 3:}
Setting $z_s = \bar{y}_s - y_s$, it follows from \eqref{eq4.69} that
\begin{align}
e_s'
&= \int_0^1 \bigl[ Dg(y_s + \eta z_s) - Dg(y_s) \bigr] z_s d\eta
\nonumber\\
&= \int_0^1 \int_0^1 D^2 g\bigl((1-\mu\eta)y_s + \mu\eta\bar{y}_s\bigr) [z_s,z_s] \eta d\mu d\eta = \bar{e}_s,
\nonumber
\end{align}
which is also controlled by $B^{H,\lambda}$. As a result, $\|e_s'\| \leq \frac12 C_g \|z_s\|^2$ and
\begin{align}\label{eq4.80}
\|e'\|_{\infty,[a,b]}
&\leq \frac{1}{2} C_g \|z\|_{\infty,[a,b]}^2,
\end{align}
\begin{align}
\|e''\|_{\infty,[a,b]}
\leq 2C_g^{2}\|z\|_{\infty,[a,b]}^{2},\nonumber
\end{align}
and
\begin{align}
|||e'|||_{p-\mathrm{var}}
&\leq C_g \bigl( |||y|||_{p-\mathrm{var}} \vee |||\bar{y}|||_{p-\mathrm{var}} \bigr)
\|z\|_{\infty}^2 \nonumber\\
&+ 2C_g \|z\|_{\infty} |||z|||_{p-\mathrm{var}}.\nonumber
\end{align}
On the other hand, we estimate that
\begin{align}\label{eq4.81}
|||R^{e\sharp}|||_{\frac{p}{3}-\mathrm{var}}
&\leq \|\bar{e}''\|_{\infty}
|||B^{H,\lambda;3}|||_{\frac{p}{3}-\mathrm{var}}
+C_g\bigg(
|||B^{H,\lambda;1}|||_{p-\mathrm{var}}
|||R^{\bar{e}\sharp}|||_{\frac{p}{3}-\mathrm{var}}
\nonumber\\
&+|||B^{H,\lambda;2}|||_{\frac{p}{2}-\mathrm{var}}
|||R^{\bar{e}\sharp\sharp}|||_{\frac{p}{2}-\mathrm{var}}
+|||\bar{e}''|||_{p-\mathrm{var}}
|||B^{H,\lambda;3}|||_{\frac{p}{3}-\mathrm{var}}
\bigg).
\end{align}
A direct calculation yields
\begin{align}
||\bar{e}''||_{\infty}\leq
C_g^{3}\|z\|_{\infty,[a,b]}^{2}
+6C_g^{3}\|z\|_{\infty,[a,b]}+
2C_g^{3}|||z|||_{p-\mathrm{var}},\nonumber
\end{align}
and
\begin{align}
|||R^{\bar{e}\sharp}|||_{\frac{p}{3}-\mathrm{var}}
&\leq |||R^{(D^{2}g)\sharp}|||_{\frac{p}{3}-\mathrm{var}}
\|z\|_{\infty,[a,b]}^{2}
+\frac{C_g}{2}|||R^{z^{2}\sharp}|||_{\frac{p}{3}-\mathrm{var}}
+\frac{1}{2}C_g^{2}
\bigg(|||B^{H,\lambda;1}|||_{p-\mathrm{var}}
|||z|||_{p-\mathrm{var}}^{2}
\nonumber\\
&+|||B^{H,\lambda;1}|||_{p-\mathrm{var}}
|||z|||_{p-\mathrm{var}}
(2 C_g^{2}|||B^{H,\lambda;2}|||_{\frac{p}{2}-\mathrm{var}}
+
|||R^{z\sharp}|||_{\frac{p}{3}-\mathrm{var}})\bigg).\nonumber
\end{align}
Therefore, there exists a generic constant D such that
\begin{align}\label{eq4.82}
&||\bar{e}'||_{\infty}\vee |||\bar{e}, R^{\bar{e}\sharp}, R^{\bar{e}\sharp\sharp}|||_{p-\mathrm{var},[a,b]} \nonumber\\
&\leq D(||z||_{\infty}+||z'||_{\infty}
+|||z|||_{p-\mathrm{var}}+
|||z'|||_{p-\mathrm{var}}+
|||R^{z\sharp}|||_{\frac{p}{3}-\mathrm{var}}+
|||R^{z\sharp\sharp}|||_{\frac{p}{2}-\mathrm{var}})^{2}
\end{align}
By replacing \eqref{eq4.80}, \eqref{eq4.81}, \eqref{eq4.82} into \eqref{eq4.70}, and using \eqref{eq4.2}, we conclude
that there exists a generic constant $D = D(p, [a,b], |||\mathbf{B}^{H, \lambda}|||_{p-\mathrm{var}})$ such that
\begin{align}\label{eq4.82-1}
\|\bar{y}(\mathbf{B}^{H, \lambda},\bar{y}_a)
- y(\mathbf{B}^{H, \lambda},y_a) - \xi(\mathbf{B}^{H, \lambda},\bar{y}_a - y_a)\|_{\infty,[a,b]}
\leq D\|\bar{y}_a - y_a\|^2.
\end{align}
This, combined with the linearity of $\xi$ with respect to $\bar{y}_a - y_a$, shows the differentiability
of $y_t(\mathbf{B}^{H, \lambda},y_a)$ with respect to $y_a$.
\end{proof}

\begin{corollary}
Consider the backward rough differential equation
\begin{align}\label{eq4.83}
h_b = h_t + \int_t^b g(h_u)
dB^{H,\lambda}_u,\quad \forall t \in [a,b].
\end{align}
Then the solution $h_t(\mathbf{B}^{H,\lambda},h_b)$ of \eqref{eq4.83} is differentiable with respect to initial condition
$h_b$, moreover, its derivatives
$\frac{\partial h_t}{\partial h_b}(\mathbf{B}^{H,\lambda},h_b)$ is the matrix solution of the linearized
backward rough differential equation
\begin{align}\label{eq4.84}
\eta_b = \eta_t + \int_t^b Dg(h_u)\eta_u
dB^{H,\lambda}_u,\quad \forall t \in [a,b].
\end{align}
Moreover, the estimates \eqref{eq4.2} also hold for the solution $h_t$ of the backward
equation \eqref{eq4.83}.

\end{corollary}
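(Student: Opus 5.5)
The plan is to reduce the backward equation \eqref{eq4.83} to a forward equation of the type \eqref{eq4.1} by reversing time, and then read off the corollary from the preceding Proposition and Theorem. Define $\overleftarrow{B}_u := B^{H,\lambda}_{a+b-u}$, $u\in[a,b]$. Its three-step lift $\overleftarrow{\mathbf{B}}$ is the time-reversed geometric rough path. On a pair $s<t$ it is given by
\[
\overleftarrow{\mathbf{B}}_{s,t} = (\mathbf{B}^{H,\lambda}_{a+b-t,\,a+b-s})^{-1},
\]
with the inverse taken in the truncated tensor algebra. Geometricity is inherited from the dyadic approximations of the Theorem in Section 3, since time reversal of a smooth path is smooth and commutes with taking canonical lifts. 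The components $\overleftarrow{B}^{2},\overleftarrow{B}^{3}$ are polynomial in $B^{H,\lambda;1},B^{H,\lambda;2},B^{H,\lambda;3}$ on the reversed interval. Hence every $p$-variation (and Hölder) seminorm of $\overleftarrow{\mathbf{B}}$ on $[s,t]$ is bounded by a constant multiple of the corresponding seminorm of $\mathbf{B}^{H,\lambda}$ on $[a+b-t,a+b-s]$. In particular the greedy counts $N$ and $\bar N$ are comparable, and they are equal if one uses the homogeneous norm, which is invariant under inversion.

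Next, set $\tilde h_u := h_{a+b-u}$. Writing the rough integral in \eqref{eq4.83} as a limit of compensated Riemann sums \eqref{eq0-2.14} and reversing the partition, I would check that $\tilde h$ solves the forward equation $d\tilde h_u = g(\tilde h_u)\,d\overleftarrow{B}_u$ with initial value $\tilde h_a = h_b$. Concretely, the Gubinelli derivatives of $h$ become those of $\tilde h$ up to the reversal identities, and the local expansion $h_{t}-h_s \approx g(h_t)B_{s,t}+\dots$ at the right endpoint is equivalent, modulo $o(|t-s|)$ terms, to the left-point expansion for $\tilde h$ against $\overleftarrow{\mathbf{B}}$. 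This equivalence is a consequence of Chen's relations \eqref{eq0-2.5}–\eqref{eq0-2.6} applied to $\mathbf{B}_{s,t}\otimes\mathbf{B}_{s,t}^{-1}=1$. This identification is the main obstacle. I would handle it by proving it first for the smooth approximations $\mathbf{B}^{H,\lambda;m}$, where it is the classical change of variables for ODEs. I would then pass to the limit using continuity of the Itô–Lyons map in $d_p$ together with the convergence established in Section 3. This avoids manipulating third-level remainders directly.

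Once this is done, the earlier Theorem on differentiability w.r.t. initial condition applies verbatim to $\tilde h$ driven by $\overleftarrow{\mathbf{B}}$. It yields that $\partial\tilde h_u/\partial\tilde h_a$ exists and solves $d\tilde\eta_u = Dg(\tilde h_u)\tilde\eta_u\,d\overleftarrow{B}_u$. Reversing time again, with the same identification argument applied to the linear equation (whose coefficient $Dg(h)$ is controlled, as in Step 1 of that theorem), shows that $\eta_t := \tilde\eta_{a+b-t} = \partial h_t/\partial h_b$ solves \eqref{eq4.84}. Finally, the Proposition preceding that Theorem, applied to $\tilde h$, gives the estimates \eqref{eq4.2} with $\bar N_{[a,b]}(\overleftarrow{\mathbf{B}})$. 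By the norm comparison above, these transfer to $h$ with $\bar N_{[a,b]}(\mathbf{B}^{H,\lambda})$, after adjusting constants in the threshold of \eqref{eq4.3}.
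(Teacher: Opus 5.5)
The paper gives no proof of this corollary; presumably it regards it as the result of rerunning Proposition 4.1 and Theorem 4.2 for the backward equation. Your time-reversal reduction, with $\overleftarrow{\mathbf B}_{s,t}=(\mathbf B^{H,\lambda}_{a+b-t,\,a+b-s})^{-1}$, is a sound and cleaner way to supply it. Instead of redoing every controlled-path estimate with expansions anchored at the right endpoint, you prove one identification lemma and then quote Proposition 4.1 and Theorem 4.2 for the driver $\overleftarrow{\mathbf B}$. The cost is that \eqref{eq4.2} comes out with $\bar N_{[a,b]}(\overleftarrow{\mathbf B})$, i.e.\ with the threshold in \eqref{eq4.3} rescaled by a $p$-dependent factor (no change for the homogeneous norm). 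That is the reasonable reading of ``the estimates \eqref{eq4.2} also hold''.

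Two points in the identification step need care.

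\emph{First}, the equivalence of right- and left-endpoint expansions does not follow from Chen's relations alone. Write $B^1,B^2,B^3$ for the levels of $\mathbf B^{H,\lambda}$ over $[s',t']$, with $s'=a+b-t$, $t'=a+b-s$, and write $g_k$ for the columns of $g$. Re-expand $g(h_{s'})$ to second order around $h_{t'}$ and cancel against $\overleftarrow{B}^{2}=-B^{2}+B^{1}\otimes B^{1}$ and $\overleftarrow{B}^{3}=-B^{3}+B^{1}\otimes B^{2}+B^{2}\otimes B^{1}-(B^{1})^{\otimes 3}$. What remains is $\sum_{i,j,k}D^2g_k[g_i,g_j]\bigl(B^{2,ij}-\tfrac12 B^{1,i}B^{1,j}\bigr)B^{1,k}$. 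This vanishes only because $\mathrm{Sym}\,B^{2}=\tfrac12 B^{1}\otimes B^{1}$, so geometricity is genuinely used. It is already built into the paper's formula for $[g(y)]''$.

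\emph{Second}, your approximation route rests on continuity of the It\^o--Lyons map in $d_p$. The paper only asserts this in passing, and its standard form (the universal limit theorem) needs $g\in\mathrm{Lip}^{\gamma}$ with $\gamma>p$. Here $p>1/H>3$, so this is more than $\mathrm{(H_2)}$ provides. The direct check avoids the issue:
\begin{itemize}
\item the re-expansions of $g(h_{s'})$, $[g(h)]'_{s'}$ and $[g(h)]''_{s'}$ around $h_{t'}$ need only $D^3g$ bounded;
\item all resulting errors are $O(|t-s|^{4\alpha})$;
\item uniqueness in the sewing lemma ($4\alpha>1$) then identifies $\tilde h_{s,t}$ with $\int_s^t g(\tilde h_u)\,d\overleftarrow{B}_u$.
\end{itemize}

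Finally, the second time reversal for the linearized equation can be skipped. The map $h_b\mapsto h_a$ is differentiable by the reversed forward theorem, so the chain rule gives $\partial h_t/\partial h_b=\Phi(t)\,\partial h_a/\partial h_b$, where $\Phi$ is the solution matrix of \eqref{eq4.48} along $h$. This matrix solves \eqref{eq4.48} and equals the identity at $t=b$, which is exactly \eqref{eq4.84}.
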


\begin{corollary}
Denote by $\varphi(t,\mathbf{B}^{H,\lambda},y_a)$ the solution mapping of the rough
equation \eqref{eq4.1}. Then $\frac{\partial \varphi}{\partial y}(t,\mathbf{B}^{H,\lambda},y_a)$ is globally Lipschitz continuous w.r.t. $y_a$.
\end{corollary}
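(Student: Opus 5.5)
By the preceding theorem, $\frac{\partial \varphi}{\partial y}(t,\mathbf{B}^{H,\lambda},y_a)=\Phi(t,\mathbf{B}^{H,\lambda},y_a)$, the matrix solution of the linearized equation \eqref{eq4.48} with $\Phi_a=\mathrm{Id}$ and coefficient $\Sigma_t=Dg(y_t)$. To prove the corollary I would take two initial values $y_a,\bar y_a$ with solutions $y,\bar y$ and derivative matrices $\Phi,\bar\Phi$. The aim is to show
\[
\|\bar\Phi-\Phi\|_{\infty,[a,b]}\le D\,\|\bar y_a-y_a\|,
\]
with a constant $D$ depending only on $p$, $[a,b]$, $C_g$ and $|||\mathbf{B}^{H,\lambda}|||_{p-\mathrm{var},[a,b]}$, and in particular not on $y_a,\bar y_a$.

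\textbf{Step 1 (equation for the difference).} Set $\zeta_t:=\bar\Phi_t-\Phi_t$. Then $\zeta_a=0$ and
\[
\zeta_t=\int_a^t Dg(y_s)\,\zeta_s\,dB^{H,\lambda}_s+\tilde e_{a,t},\qquad
\tilde e_{a,t}:=\int_a^t\bigl[Dg(\bar y_s)-Dg(y_s)\bigr]\bar\Phi_s\,dB^{H,\lambda}_s .
\]
This has exactly the structure of \eqref{eq4.69} for $r_t$: a linear rough equation driven by $Dg(y)$ plus a controlled forcing term. I would therefore reuse Step 2 of the previous proof verbatim with $\tilde e$ in place of $e$, which gives the analogue of \eqref{eq4.70}:
\[
\|\zeta\|_{\infty}\vee|||\zeta,R^{\zeta\sharp},R^{\zeta\sharp\sharp}|||_{p-\mathrm{var}}
\le M_2\bigl(\|\tilde e'\|_\infty+|||\tilde e'|||_{p-\mathrm{var}}+|||R^{\tilde e\sharp}|||_{\frac p3-\mathrm{var}}+\|\tilde e''\|_\infty+|||R^{\tilde e\sharp\sharp}|||_{\frac p2-\mathrm{var}}\bigr)\,e^{(1+\log 2)N}.
\]
Here $N$ is the number of greedy stopping times, and \eqref{eq0-2.19} bounds it by a function of $|||\mathbf{B}^{H,\lambda}|||_{p-\mathrm{var},[a,b]}$ alone.

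\textbf{Step 2 (the forcing term is $O(\|\bar y_a-y_a\|)$).} The Gubinelli derivative of the integrand is $\tilde e'_s=[Dg(\bar y_s)-Dg(y_s)]\bar\Phi_s$, so $\|\tilde e'_s\|\le C_g\|z_s\|\,\|\bar\Phi_s\|$ with $z=\bar y-y$. The second derivative $\tilde e''$ and the remainders are handled as in \eqref{eq4.26}--\eqref{eq4.40}. Each term contains one difference factor, namely $z$, $R^{z\sharp}$, $R^{z\sharp\sharp}$, or a difference of $Dg,D^2g,D^3g$ evaluated at $\bar y$ and $y$. Since $g\in C^3_b$, each such difference is Lipschitz in $z$, except that the third derivative gives only $\|D^3g\|_\infty$-bounded terms, which need care. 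The remaining factors are $\bar\Phi$, its derivatives and remainders, and $y,\bar y$ in $p$-variation. By Proposition 4.1 (estimate \eqref{eq4.2}) we have $\|z\|_\infty+|||z,R^{z\sharp},R^{z\sharp\sharp}|||_{p-\mathrm{var}}\le D\|\bar y_a-y_a\|$. By \eqref{eq4.24}, $y$ and $\bar y$ are bounded uniformly in the initial data. Finally, $\Phi$ solves a linear equation whose coefficient has bounds independent of $y_a$ (Step 1 of the previous proof, i.e.\ \eqref{eq4.68} combined with greedy times), so it also has uniform bounds. Together these give $\text{(forcing)}\le D\|\bar y_a-y_a\|$, and the claim follows.

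\textbf{Main obstacle.} The delicate point is the second Gubinelli derivative $\tilde e''$ and the remainder $R^{\tilde e\sharp}$. Expanding them brings in $D^3g(\bar y)-D^3g(y)$, and with only $g\in C^3_b$ this cannot be bounded linearly in $z$. I would first try to arrange the expansion, as in \eqref{eq4.27}, so that $D^3g$ appears only multiplied by $y_{s,t}$ or $z_{s,t}$ inside a mean-value integral, never as a difference. If that fails, I would add the assumption that $D^3g$ is Lipschitz (i.e.\ $g\in C^4_b$), or equivalently prove only local Lipschitz continuity on bounded sets of initial values. This weaker statement is all that the subsequent Doss--Sussmann argument needs.
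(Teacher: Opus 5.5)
Your plan follows the paper's proof essentially step for step. The difference of the two derivative flows solves an equation with the structure of \eqref{eq4.69}, and the Step~2 bound \eqref{eq4.70} is reused (this is \eqref{eq4.88}). The forcing terms are then bounded by $M(\|z_a\|+|||z,R^{z\sharp},R^{z\sharp\sharp}|||_{p-\mathrm{var},[a,b]})$, which \eqref{eq4.2} reduces to $M\|z_a\|$. The only immaterial difference is that the paper takes the uniform bound on $\bar\Phi$ from \eqref{eq4.86}, not from \eqref{eq4.68}.

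The regularity issue you flag with $D^3g(\bar y)-D^3g(y)$ is genuine, and the paper passes over it silently when it appeals to ``Step 3''. However, restricting to bounded sets of initial values is not equivalent to assuming $D^3g$ Lipschitz. It does not make that difference $O(\|z\|)$ when $D^3g$ is merely bounded and continuous.
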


\begin{proof}
In view of \eqref{eq4.2} and \eqref{eq4.82-1}, we find that
\begin{align}
\|\xi(\mathbf{B}^{H,\lambda},\bar{y}_a - y_a)\|_{\infty,[a,b]}
&\leq \|\bar{y}(\mathbf{B}^{H,\lambda},\bar{y}_a) - y(\mathbf{B}^{H,\lambda},y_a)\| + D\|\bar{y}_a - y_a\|^2 \nonumber\\
&\leq \|\bar{y}_a - y_a\| e^{(\log 2)\bar{N}_{[a,b]}(\mathbf{B}^{H,\lambda})} + D\|\bar{y}_a - y_a\|^2.\nonumber
\end{align}
By fixing $y_a$, dividing both sides by $\|\bar{y}_a - y_a\|$, and then taking the limit as $\|\bar{y}_a - y_a\|$
to zero, we obtain
\begin{align}
\bigg\|\frac{\partial \varphi}{\partial y}(t,\mathbf{B}^{H,\lambda},y_a)\bigg\|_{\infty,[a,b]}
\leq e^{(\log 2)\bar{N}_{[a,b]}(\mathbf{B}^{H,\lambda})},\nonumber
\end{align}
and
\begin{align}\label{eq4.85}
\bigg
\|\frac{\partial \varphi}{\partial y}(t,\mathbf{B}^{H,\lambda},y_a)\bigg\|_{p-\mathrm{var},[a,b]}
\leq\bar{N}_{[a,b]}^{\frac{p-1}{p}}(\mathbf{B}^{H,\lambda})
e^{(\log 2)\bar{N}_{[a,b]}(\mathbf{B}^{H,\lambda})}.
\end{align}
Similarly if we fix $\bar{y}_a$ then
\begin{align}
\bigg\|\frac{\partial \varphi}{\partial y}(t,\mathbf{B}^{H,\lambda},\bar{y}_a)\bigg\|_{\infty,[a,b]}
\leq e^{(\log 2)\bar{N}_{[a,b]}(\mathbf{B}^{H,\lambda})},\nonumber
\end{align}
and
\begin{align}\label{eq4.86}
\bigg\|\frac{\partial \varphi}{\partial y}(t,\mathbf{B}^{H,\lambda},\bar{y}_a)\bigg\|_{p-\mathrm{var},[a,b]}
\leq \bar{N}_{[a,b]}^{\frac{p-1}{p}}(\mathbf{B}^{H,\lambda}) e^{(\log 2)\bar{N}_{[a,b]}(\mathbf{B}^{H,\lambda})}.
\end{align}
Next for $\xi_t = \frac{\partial \varphi}{\partial y}(t,\mathbf{B}^{H,\lambda},y_a)\xi_a$ and $\bar{\xi}_t = \frac{\partial \varphi}{\partial y}(t,\mathbf{B}^{H,\lambda},\bar{y}_a)\xi_a$, we consider the difference
$r_t = \bar{\xi}_t - \xi_t$, which satisfies $r_a = 0$ and
\begin{align}\label{eq4.87}
r_{s,t}
&= \int_s^t \bigl( Dg(\bar{y}_u)\bar{\xi}_u - Dg(y_u)\xi_u \bigr) dB^{H,\lambda}_u \nonumber\\
&= \int_s^t\bigl( Dg(\bar{y}_u) - Dg(y_u) \bigr)
\bar{\xi}_u dB^{H,\lambda}_u + \int_s^t Dg(y_u) r_u dB^{H,\lambda}_u \nonumber\\
&= e_{s,t} + \int_s^t Dg(y_u) r_u dB^{H,\lambda}_u.
\end{align}
Equation \eqref{eq4.87} has the same structure as \eqref{eq4.69}. Therefore, by employing the arguments from Step 2 of Theorem 4.2, we obtain an estimate similar to \eqref{eq4.70}. To be precise, there exists a constant $M$ such that
\begin{align}\label{eq4.88}
&\|r\|_{\infty,[a,b]} \vee |||r, R^{r\sharp}, R^{r\sharp\sharp}|||_{p-\mathrm{var},[a,b]}
\leq M e^{(1+\log 2) N_{\frac{1}{2M C_g},[a,b]}(\mathbf{B}^{H,\lambda})}\nonumber\\
&\times \bigg(
\|e'\|_{\infty, [a,b]}+
|||e'|||_{p-\mathrm{var}}+
|||R^{e\sharp}|||_{\frac{p}{3}-\mathrm{var}}+
\|e''\|_{\infty, [a,b]}+
|||R^{e\sharp\sharp}|||_{\frac{p}{2}-\mathrm{var}}
\bigg),
\end{align}
where we use the fact that $r_a = 0$.
From \eqref{eq4.87}, we derive the following estimates for the terms on the right-hand side of \eqref{eq4.88}:
\begin{align}
e_s' = \bigl( Dg(\bar{y}_s) - Dg(y_s) \bigr)\bar{\xi}_s = \int_0^1 D^2 g\bigl((1-\eta)y_s + \eta\bar{y}_s\bigr) [z_s,\bar{\xi}_s] d\eta.\nonumber
\end{align}
From \eqref{eq4.86} and the estimates derived in Step 3 of Theorem 4.2, it follows that there exists a generic constant $M = M(p, [a,b], \|\mathbf{B}^{H,\lambda}\|_{p-\mathrm{var},[a,b]}) > 1$
such that
\begin{align}\label{eq4.89}
&\|e'\|_{\infty,[a,b]}+\|e''\|_{\infty,[a,b]}
+|||e'|||_{p-\mathrm{var},[a,b]}
+|||R^{e\sharp}|||_{\frac{p}{3}-\mathrm{var}}\nonumber\\
&+
|||R^{e\sharp\sharp}|||_{\frac{p}{2}-\mathrm{var}}
\leq M(||z_{a}||+|||z, R^{z\sharp}, R^{z\sharp\sharp}\|_{p-\mathrm{var},[a,b]}).
\end{align}
Replacing \eqref{eq4.89} into \eqref{eq4.88}, we conclude that there exists a
generic constant $M > 1$ such that
\begin{align}\label{eq4.90}
\|r\|_\infty \vee |||r,R^{r\sharp}, R^{r\sharp\sharp}|||_{p-\mathrm{var},[a,b]}
\leq M\bigl(
\|z_a\| + |||z,R^{z\sharp},R^{r\sharp\sharp}|||_{p-\mathrm{var},[a,b]} \bigr) \leq M\|z_a\|,
\end{align}
where the last inequality is due to \eqref{eq4.2}. This proves the global Lipschitz continuity of $\frac{\partial \varphi}{\partial y}(t,\mathbf{B}^{H,\lambda},y_a)$ with respect to $y_a$.

\end{proof}

\begin{theorem}
There exists a solution to \eqref{eq0.1} on the interval
$[a,b]$ with the initial value $y_a$.
\end{theorem}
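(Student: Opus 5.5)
We plan to follow the Doss--Sussmann reduction announced in the introduction. Let $\varphi(t,\mathbf{B}^{H,\lambda},\cdot)$ be the solution flow of the pure rough equation \eqref{eq4.1} started at time $a$, and let $\psi(t,\mathbf{B}^{H,\lambda},\cdot)$ be its inverse. By the Corollary on the backward equation \eqref{eq4.83}, $\psi$ is obtained by solving \eqref{eq4.83} backward from time $t$ to $a$. We look for a solution of \eqref{eq0.1} of the form $y_t=\varphi(t,\mathbf{B}^{H,\lambda},z_t)$, where $z$ is a path of bounded variation with $z_a=y_a$. Formally, the rough part of $dy_t$ is produced by the $t$-dependence of $\varphi$, and the remaining part comes from $\frac{\partial\varphi}{\partial y}(t,\mathbf{B}^{H,\lambda},z_t)\dot z_t$. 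So $y$ solves \eqref{eq0.1} as soon as $z$ solves the random ODE
\begin{align}
\dot z_t=\Big[\frac{\partial \varphi}{\partial y}(t,\mathbf{B}^{H,\lambda},z_t)\Big]^{-1} f\big(\varphi(t,\mathbf{B}^{H,\lambda},z_t)\big),\qquad z_a=y_a. \nonumber
\end{align}
The inverse Jacobian equals $\frac{\partial\psi}{\partial y}(t,\mathbf{B}^{H,\lambda},\varphi(t,z_t))$, and it is given by the linearized backward equation \eqref{eq4.84}.

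\textbf{Step 1: the ODE vector field is globally Lipschitz.} Write $F(t,z)$ for the right-hand side above. We will show that $F$ is continuous in $t$ and globally Lipschitz in $z$, with a constant depending on $\bar N_{[a,b]}(\mathbf{B}^{H,\lambda})$. The ingredients are as follows. $f$ is Lipschitz by $(\mathrm{H}_1)$. $\varphi(t,\cdot)$ is Lipschitz with constant $e^{(\log 2)\bar N}$ by Proposition 4.1. $\frac{\partial\psi}{\partial y}$ is bounded by the Corollary applied to \eqref{eq4.83}, since the estimates \eqref{eq4.2} hold for the backward flow. Finally, $\frac{\partial\psi}{\partial y}$ is globally Lipschitz in its argument by the backward analogue of the preceding Corollary. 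The product of a bounded Lipschitz matrix with a Lipschitz function of linear growth is only locally Lipschitz in general. We will handle this by noting that $\|f(\varphi(t,z))\|\le \|f(0)\|+C_f e^{(\log 2)\bar N}(\|z\|+\|\varphi(t,0)\|)$. This gives a linear-growth bound on $F$ and a local Lipschitz property with constants uniform in $t$. Combining this with a priori Gronwall bounds then yields a unique global solution $z$ on $[a,b]$ by Picard--Lindel\"of. Continuity of $F$ in $t$ follows from the continuity of the flows in $t$.

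\textbf{Step 2: verify that $y_t=\varphi(t,z_t)$ solves \eqref{eq0.1}.} This is the step we expect to be the main obstacle, because it requires a chain rule for $\varphi(t,z_t)$ in the controlled-path framework. We will first check that $y$ is controlled by $B^{H,\lambda}$ with $y'_s=g(y_s)$ and $y''_s=Dg(y_s)g(y_s)$. The plan is to split
\[
y_{s,t}=\big[\varphi(t,z_s)-\varphi(s,z_s)\big]+\big[\varphi(t,z_t)-\varphi(t,z_s)\big].
\]
The first bracket is $\int_s^t g(\varphi(u,z_s))\,dB_u$, and its expansion is controlled by \eqref{eq0-2.17}. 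For the second bracket we use the mean value theorem together with differentiability of $\varphi$ in $z$. This gives $\frac{\partial\varphi}{\partial y}(t,z_s)\,z_{s,t}$ plus an error of order $|t-s|^2$, by the Lipschitz continuity of $\frac{\partial\varphi}{\partial y}$ and the fact that $z\in C^1$. Replacing $\frac{\partial\varphi}{\partial y}(t,z_s)$ by its value at $s$ costs $|t-s|^{1+\alpha}$, which is negligible against $|t-s|^{4\alpha}$-type remainders because $4\alpha>1$. Summing over partitions then identifies the first bracket with $\int_s^t g(y_u)\,dB_u$ and the second with $\int_s^t \frac{\partial\varphi}{\partial y}(u,z_u)\dot z_u\,du=\int_s^t f(y_u)\,du$. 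Remainders of order $|t-s|^{>1}$ vanish in the limit. To compare $g(\varphi(u,z_s))$ with $g(\varphi(u,z_u))$ we use Proposition 4.1, which gives a bound of $C|u-s|$ in $p$-variation norm.

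\textbf{Step 3: extension and the role of the greedy times.} If the constants in Steps 1--2 are only available on short intervals, we will work on the intervals of the greedy sequence \eqref{eq4.3}. On $[\tau_k,\tau_{k+1}]$ we restart the construction with initial value $y_{\tau_k}$, and we concatenate the resulting pieces. Chen's relation and the additivity of rough integrals guarantee that the concatenated path solves \eqref{eq0.1} on all of $[a,b]$. Since $\bar N_{[a,b]}<\infty$ by \eqref{eq4.47}, finitely many steps suffice.
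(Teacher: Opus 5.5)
Your proposal is correct and follows essentially the paper's own proof. The paper uses the same Doss--Sussmann ODE with right-hand side $\frac{\partial\psi}{\partial h}(t,\mathbf{B}^{H,\lambda},\varphi(t,\mathbf{B}^{H,\lambda},z_t))\,f(\varphi(t,\mathbf{B}^{H,\lambda},z_t))$, which is locally Lipschitz with linear growth; the same splitting $y_{s,t}=[\varphi(t,z_t)-\varphi(t,z_s)]+\int_s^t g(\varphi(u,z_s))\,dB^{H,\lambda}_u$, handled by the quadratic remainder \eqref{eq4.82-1} and the expansion \eqref{eq0-2.15} at $\varphi(s,z_s)=y_s$; and summation over partitions with an $O(|\Pi|^{4\alpha-1})$ error. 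Your Step 3 is not needed there, since the greedy-time bookkeeping is already built into the global flow estimates on $[a,b]$ (Proposition 4.1 and the corollaries); also, your reading of $\psi(t,\cdot)$ as the backward flow from $t$ to $a$ is the interpretation under which the paper's \eqref{eq4.91} and \eqref{eq4.93} make sense.
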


\begin{proof}
We apply the Doss-Sussmann transformation \cite{Sussmann} to the rough integral, thereby reducing the problem to proving the existence of a solution for the transformed system. The proof proceeds in several steps:

\noindent\textbf{Step 1.}
Let $\varphi(t,\mathbf{B}^{H, \lambda},y_a)$ and $\psi(t,\mathbf{B}^{H, \lambda},h_b)$ denote the solution maps of the forward equation \eqref{eq4.1} and the backward equation \eqref{eq4.83}, respectively. Then
$\varphi(t,\mathbf{B}^{H, \lambda})\circ\psi(t,\mathbf{B}^{H, \lambda}) = I^{d\times d}$, and due to Theorem 4.2 and Corollary 4.3, $\varphi,\psi$ are continuously differentiable with respect to $y_a$ and $h_b$ respectively. Furthermore,
\begin{align}\label{eq4.91}
\frac{\partial \varphi}{\partial y}(t,\mathbf{B}^{H, \lambda},y_a)\frac{\partial \psi}{\partial h}(t,\mathbf{B}^{H, \lambda},y_b)
= \frac{\partial \psi}{\partial h}(t,\mathbf{B}^{H, \lambda},y_b)\frac{\partial \varphi}{\partial y}(t,\mathbf{B}^{H, \lambda},y_a)
= I^{d\times d}.
\end{align}
Arguing as in \eqref{eq4.85} and \eqref{eq4.86}, we obtain
\begin{align}\label{eq4.92}
\left\|\frac{\partial \psi}{\partial h}(t,\mathbf{B}^{H, \lambda},y_b)\right\|_{\infty,[a,b]} \leq e^{(\log 2)\bar{N}_{[a,b]}(\mathbf{B}^{H, \lambda})}.
\end{align}
Now consider the ordinary differential equation
\begin{align}\label{eq4.93}
\dot{z}_t = \frac{\partial \psi}{\partial h}\bigl(t,\mathbf{B}^{H, \lambda},\varphi(t,B^{H, \lambda},z_t)\bigr) f\bigl(\varphi(t,B^{H, \lambda},z_t)\bigr) = F(t,z_t),\quad t \in [a,b].
\end{align}

As in Corollary 4.4, the map $\frac{\partial \psi}{\partial h}(t,\mathbf{B}^{H, \lambda},z)$ is globally Lipschitz continuous in $z$.
Moreover, $\varphi(t,x,z)$  is also globally Lipschitz continuous with respect to $z$.
Consequently, the composition $\frac{\partial \psi}{\partial h}\bigl(t,\mathbf{B}^{H, \lambda},\varphi(t,x,z_t)\bigr)$ inherits this global Lipschitz continuity in $z$. Given that
$f$ is also globally Lipschitz continuous and of linear growth, it follows that the function
$F(t,z)$ appearing on the right-hand side of \eqref{eq4.93} is locally Lipschitz continuous and satisfies a linear growth condition. Therefore, equation \eqref{eq4.93} admits a unique solution.

\noindent\textbf{Step 2.}
To this end, consider the transformation $y_t = \varphi(t,\mathbf{B}^{H, \lambda},z_t)$ for $t \in [a,b]$. We shall then prove that $y_t$ satisfies the equation
\begin{align}\label{eq4.94}
dy_t = f(y_t)dt + g(y_t)dB^{H, \lambda}_t.
\end{align}
We first obtain that
\begin{align}\label{eq4.95}
y_{s,t}
&= \varphi(t,\mathbf{B}^{H, \lambda},z_t) - \varphi(t,\mathbf{B}^{H, \lambda},z_s) + \varphi(t,\mathbf{B}^{H, \lambda},z_s) - \varphi(s,\mathbf{B}^{H, \lambda},z_s)
\nonumber\\
&= \bigl[ \varphi(t,\mathbf{B}^{H, \lambda},z_t) - \varphi(t,\mathbf{B}^{H, \lambda},z_s) \bigr] + \int_s^t g\bigl(\varphi(u,\mathbf{B}^{H, \lambda},z_s)\bigr) dB^{H, \lambda}_u.
\end{align}
In view of  \eqref{eq4.82-1} and Theorem 4.2,
the term in the square bracket satisfies
\begin{align}\label{eq4.96}
&\bigl\| \varphi(t,\mathbf{B}^{H, \lambda},z_t) - \varphi(t,\mathbf{B}^{H, \lambda},z_s) - \frac{\partial \varphi}{\partial z}(s,\mathbf{B}^{H, \lambda},z_s) z_{s,t} \bigr\| \nonumber\\
&\leq \bigl\| \varphi(t,\mathbf{B}^{H, \lambda},z_t) - \varphi(t,\mathbf{B}^{H, \lambda},z_s) - \frac{\partial \varphi}{\partial z}(t,\mathbf{B}^{H, \lambda},z_s) z_{s,t} \bigr\| \nonumber\\
&\quad + \bigl\| \frac{\partial \varphi}{\partial z}(t,\mathbf{B}^{H, \lambda},z_s) - \frac{\partial \varphi}{\partial z}(s,\mathbf{B}^{H, \lambda},z_s) \bigr\| \|z_{s,t}\| \nonumber\\
&\leq D\|z_{s,t}\|^2 + \bigl\| \int_s^t Dg\bigl(\varphi(u,\mathbf{B}^{H, \lambda},z_s)\bigr) \frac{\partial \varphi}{\partial z}(u,\mathbf{B}^{H, \lambda},z_s) dx_u \bigr\| \|z_{s,t}\| \nonumber\\
&\leq D(t-s)^{1+\alpha}
\end{align}
for some constant $D$.
On the other hand, it follows from \eqref{eq0-2.15} that
\begin{align}\label{eq4.97}
&\int_s^t g\bigl(\varphi(u,\mathbf{B}^{H, \lambda},z_s)\bigr)
dB^{H, \lambda}_u\nonumber\\
&= g\bigl(\varphi(s,\mathbf{B}^{H, \lambda},z_s)\bigr) \otimes B^{H, \lambda;1}_{s,t}
+ [g\bigl(\varphi(s,\mathbf{B}^{H, \lambda},z_s)]'
B^{H, \lambda;2}_{s,t} \nonumber\\
&
+
[g\bigl(\varphi(s,\mathbf{B}^{H, \lambda},z_s)]''
B^{H, \lambda;3}_{s,t}
+D(t-s)^{4\alpha} \nonumber\\
&= g(y_s) \otimes
B^{H, \lambda;1}_{s,t} +
Dg(y_s) g(y_s) \otimes
B^{H, \lambda;2}_{s,t}\nonumber\\
&
+ [D^{2}g(y_s) g^{2}(y_s)+Dg(y_s)Dg(y_s)g(y_s)] \otimes
B^{H, \lambda;3}_{s,t}
D(t-s)^{4\alpha}.
\end{align}
By combining \eqref{eq4.95}-\eqref{eq4.97}, we now can write
\begin{align}\label{eq4.98}
y_{s,t} &= f(y_s)(t-s) + g(y_s) \otimes
B^{H, \lambda;1}_{s,t} + Dg(y_s) g(y_s)
B^{H, \lambda;2}_{s,t} \nonumber\\
&
+
[D^{2}g(y_s) g^{2}(y_s)+Dg(y_s)Dg(y_s)g(y_s)] B^{H, \lambda;3}_{s,t}
+
\mathcal{O}\bigl((t-s)^{4\alpha}\bigr).
\end{align}
Equation \eqref{eq4.98} implies that $y$ is controlled by the driving path $B^{H,\lambda}$, with derivative
$y_s' = g\bigl(\varphi(s,\mathbf{B}^{H, \lambda},z_s)\bigr) = g(y_s)$. Consequently, $g(y)$ is also controlled by $B^{H,\lambda}$, and its derivative is given by $[g(y)]_s' = Dg(y_s) y_s' =
Dg(y_s) g(y_s)$ for all $s \in [a,b]$. Now, consider any finite partition $\Pi$ of $[s,t]$
such that its mesh size satisfies $|\Pi| = \max_{[u,v]\in\Pi} |v-u| \ll 1$. Then, from \eqref{eq4.98} we obtain
\begin{align}\label{eq4.99}
&y_{s,t}
= \sum_{[u,v]\in\Pi} y_{u,v} \nonumber\\
&= \sum_{[u,v]\in\Pi} \bigg( g(y_u) \otimes
B^{H, \lambda;1}_{u,v} +
Dg(y_u) g(y_u)
B^{H, \lambda;2}_{u,v}
+[D^{2}g(y_u) g^{2}(y_u)+
Dg(y_u)Dg(y_u)g(y_u)] B^{H, \lambda;3}_{u,v}
\bigg)\nonumber\\
&
+\sum_{[u,v]\in\Pi} f(y_u)(v-u)
+ \sum_{[u,v]\in\Pi}
\mathcal{O}\bigg((v-u)^{4\alpha}\bigg)
\nonumber\\
&=
\sum_{[u,v]\in\Pi} \bigl( g(y_u) \otimes
B^{H, \lambda;1}_{u,v} + Dg(y_u) g(y_u)
B^{H, \lambda;2}_{u,v}
+[D^{2}g(y_u) g^{2}(y_u)+Dg(y_u)Dg(y_u)g(y_u)]
B^{H, \lambda;3}_{u,v}
\bigg)\nonumber\\
&
+\sum_{[u,v]\in\Pi} f(y_u)(v-u) + \mathcal{O}\bigl(|\Pi|^{4\alpha-1}\bigr).
\end{align}
Let $|\Pi| \to 0$, the first term in \eqref{eq4.99} converges to $\int_s^t f(y_u) du$ while the second
term converges to the Gubinelli rough integral $\int_s^t g(y_u) dB^{H,\lambda}_u$. We conclude
that
\[
y_{s,t} = \int_s^t f(y_u) du + \int_s^t g(y_u) dB^{H,\lambda}_u,
\]
which proves the existence part.

\end{proof}

We now present the uniqueness theorem for the solution, whose proof is analogous to that of Theorem 3.9 in \cite{Duc-2020} and is therefore omitted here.

\begin{theorem}
The solution $y_{t}(\mathbf{B}^{H, \lambda}, y_{a})$ of \eqref{eq0.1} is uniformly continuous with respect to $y_{a}$. In particular, there exists a unique solution with respect to the initial condition $y_{a}$.

\end{theorem}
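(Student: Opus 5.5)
The plan is to pass, via the Doss--Sussmann transformation, from the rough equation \eqref{eq0.1} to the ordinary differential equation \eqref{eq4.93}, prove Lipschitz dependence on the initial value at the ODE level, and then transport the estimate back to $y$.

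\textbf{Step 1: every solution comes from the ODE.} Let $y$ be any solution of \eqref{eq0.1} with initial value $y_a$. Set $z_t:=\psi(t,\mathbf{B}^{H,\lambda},y_t)$, where $\psi$ is the backward flow of Corollary 4.3, so that $y_t=\varphi(t,\mathbf{B}^{H,\lambda},z_t)$. I will verify that $z$ solves \eqref{eq4.93}. To do this, write
\[
z_{s,t}=\bigl[\psi(t,y_t)-\psi(t,y_s)\bigr]+\bigl[\psi(t,y_s)-\psi(s,y_s)\bigr].
\]
The first bracket is expanded with the Taylor estimate \eqref{eq4.82-1} (in its backward version) and with the controlled expansion \eqref{eq4.98} of $y_{s,t}$. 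The second bracket is expanded by the backward rough equation \eqref{eq4.83}. In the sum, the rough terms $g\,B^{H,\lambda;1}$, $Dg\,g\,B^{H,\lambda;2}$ and the level-three terms cancel, because $\frac{\partial\psi}{\partial h}g=-\partial_t\psi$ along the flow. What remains is
\[
z_{s,t}=\frac{\partial\psi}{\partial h}(s,y_s)\,f(y_s)(t-s)+o(t-s),
\]
so $z$ is $C^1$ and solves \eqref{eq4.93}. This identification is the main obstacle. One must check that the level-three cancellation is exact and that the error is $\mathcal{O}((t-s)^{4\alpha})$ with $4\alpha>1$. For this I will track the Gubinelli derivatives $\psi'$ and $\psi''$ using \eqref{eq0-2.15}, mirroring Step 2 of Theorem 4.5.

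\textbf{Step 2: Lipschitz estimate for the ODE.} Take two initial values $y_a,\bar y_a$. Then $z_a=y_a$ and $\bar z_a=\bar y_a$, since $\psi(a,\cdot)\circ\varphi(a,\cdot)=\mathrm{id}$. The vector field $F(t,z)$ of \eqref{eq4.93} is Lipschitz in $z$ with constant
\[
L=C_fe^{2(\log2)\bar N}+\mathrm{Lip}\Bigl(\tfrac{\partial\psi}{\partial h}\Bigr)\,e^{(\log 2)\bar N}\,\|f\|_{\text{lin}}.
\]
This uses (H$_1$), Corollary 4.4, \eqref{eq4.92}, and the Lipschitz bound for $\varphi$ in Proposition 4.1. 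If the Lipschitz constant of $\frac{\partial\psi}{\partial h}\circ\varphi$ only gives local control because $f$ grows linearly, I will first bound $\|z\|_\infty$ a priori by Gronwall, and then work on the resulting ball. Gronwall's lemma then gives
\[
\|\bar z-z\|_{\infty,[a,b]}\le \|\bar y_a-y_a\|e^{L(b-a)}.
\]

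\textbf{Step 3: back to $y$ and conclusion.} By Proposition 4.1 applied to $\varphi$,
\[
\|\bar y-y\|_{\infty,[a,b]}\le e^{(\log2)\bar N_{[a,b]}(\mathbf{B}^{H,\lambda})+L(b-a)}\|\bar y_a-y_a\|.
\]
This is the uniform continuity in $y_a$. Taking $\bar y_a=y_a$, and using that the ODE \eqref{eq4.93} has a unique solution (Step 1 of Theorem 4.5), any two solutions of \eqref{eq0.1} have the same $z$ and hence coincide. If the constants depend on the interval length through $\Theta_3$, I will restrict to the greedy stopping times \eqref{eq4.3}. I will then repeat the argument on each $[\tau_k,\tau_{k+1}]$ and concatenate, multiplying at most $\bar N$ such factors.
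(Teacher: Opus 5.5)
\textbf{The gap is in Step 1, and the tools you cite cannot close it.} You expand $\psi(t,y_t)-\psi(t,y_s)$ with the first-order Taylor bound \eqref{eq4.82-1}, whose remainder is $D\|y_{s,t}\|^2$.

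In Step 2 of Theorem 4.5 the analogous remainder was harmless. There the spatial argument is the ODE solution, so $\|z_{s,t}\|^2=\mathcal{O}((t-s)^2)$; this is how \eqref{eq4.96} reaches $(t-s)^{1+\alpha}$. Here the spatial argument is the rough solution itself, so $\|y_{s,t}\|^2$ is only of order $(t-s)^{2\alpha}$ with $2\alpha<1$. These errors do not vanish when summed over a partition, so you cannot reach $z_{s,t}=\frac{\partial\psi}{\partial h}(s,y_s)f(y_s)(t-s)+o(t-s)$. ``Mirroring'' Theorem 4.5 reverses exactly the roles that made that argument work.

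What Step 1 really needs is a rough It\^o--Wentzell formula for $\psi(t,y_t)$. That means:
\begin{itemize}
\item a third-order Taylor expansion of $\psi(t,\cdot)$ in $y_{s,t}$ with an $\mathcal{O}(\|y_{s,t}\|^4)$ remainder;
\item controlled-path expansions in $t$ of $\psi(\cdot,h)$, $\frac{\partial\psi}{\partial h}(\cdot,h)$ and $\frac{\partial^2\psi}{\partial h^2}(\cdot,h)$;
\item the geometric identities for the symmetric parts of $B^{H,\lambda;2}$ and $B^{H,\lambda;3}$, used to cancel cross terms. For example, $\bigl[\frac{\partial\psi}{\partial h}(t,y_s)-\frac{\partial\psi}{\partial h}(s,y_s)\bigr]g(y_s)B^{H,\lambda;1}_{s,t}$ must cancel against $\frac12\frac{\partial^2\psi}{\partial h^2}(s,y_s)[g(y_s)B^{H,\lambda;1}_{s,t}]^{\otimes 2}$ and the $B^{H,\lambda;2}$-terms.
\end{itemize}
This requires second and third spatial derivatives of the inverse flow, with a modulus of continuity for the third. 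The paper only provides $\frac{\partial\psi}{\partial h}$ and its Lipschitz continuity (Theorem 4.2, Corollaries 4.3--4.4), and getting more would need more derivatives of $g$ than $\mathrm{(H_2)}$ gives.

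Relatedly, for fixed $h$ the map $t\mapsto\psi(t,h)=\varphi(t,\cdot)^{-1}(h)$ is not a trajectory of \eqref{eq4.83}. So the identity ``$\frac{\partial\psi}{\partial h}g=-\partial_t\psi$ along the flow'' is not available pathwise. It is a rough transport equation, and establishing it needs the same higher regularity.

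The paper omits the proof and defers to Theorem 3.9 of \cite{Duc-2020}. The form of the statement (uniform continuity for any two solutions, with uniqueness as the case $\bar y_a=y_a$) reflects a direct stability estimate that extends Proposition 4.1 to the drift case and never inverts the transformation:
\begin{itemize}
\item For two arbitrary solutions, $z=\bar y-y$ satisfies $z_{s,t}=\int_s^t[f(\bar y_u)-f(y_u)]du+\int_s^t[g(\bar y_u)-g(y_u)]dB^{H,\lambda}_u$.
\item The rough integral is estimated as in Step 2 of Proposition 4.1, using a priori bounds on the norms of $y$ and $\bar y$.
\item The drift adds at most $C_f\|z\|_{\infty,[s,t]}(t-s)$.
\item On each interval of a greedy sequence of stopping times this gives $|||z,R^{z\sharp},R^{z\sharp\sharp}|||_{p-\text{var}}\lesssim\|z_s\|$, which concatenates to $\|\bar y-y\|_{\infty,[a,b]}\le C\|\bar y_a-y_a\|$.
\end{itemize}
This uses only the first-order controlled-path calculus already developed. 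Your Steps 2--3 would be fine if Step 1 held (with the local-Lipschitz fallback you mention). But Step 1 carries all the content of uniqueness, and as planned it does not go through.
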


\appendix
\noindent{\section{Appendix}}

\begin{lemma}
Assume the conditions in Lemma 2.1 hold, then we have for $l>r$,
\begin{align}
&\bigg[
\mathbb{E}
\bigg(
\Delta_{2l-1}^{m+1}B^{H,\lambda}
\Delta_{2r-1}^{m+1}B^{H,\lambda}\bigg)\bigg]^{2}
\nonumber\\
&~~-
\mathbb{E}\bigg(
\Delta_{2l-1}^{m+1}B^{H,\lambda}
\Delta_{2r}^{m+1}B^{H,\lambda}\bigg)
\mathbb{E}\bigg(
\Delta_{2l}^{m+1}B^{H,\lambda}
\Delta_{2r-1}^{m+1}B^{H,\lambda}\bigg)\nonumber\\
&\leq \frac{c_{H, \lambda}}{2^{(2H+4)m}}
+\frac{c_{H, \lambda}}{2^{(4H+2)m}}.\nonumber
\end{align}

\end{lemma}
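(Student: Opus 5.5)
Throughout, all increments are taken on the grid of mesh $\delta:=2^{-(m+1)}$, and we use the abbreviation $\Delta_j:=\Delta_j^{m+1}B^{H,\lambda}$ for the increment over $[(j-1)\delta,j\delta]$. The plan is to express every covariance through the covariance structure \eqref{eq2.1}. Write $\sigma(t):=C_t^2t^{2H}=\mathbb{E}(B^{H,\lambda}_t)^2$. Then the covariance of two increments of length $\delta$ whose left endpoints are $k\delta$ apart is a second difference:
\begin{align*}
\rho(k):=\mathbb{E}(\Delta_{j+k}\Delta_{j})=\tfrac12\big[\sigma((k+1)\delta)+\sigma((k-1)\delta)-2\sigma(k\delta)\big].
\end{align*}
This depends only on $k$, because TFBM has stationary increments; this follows from \eqref{eq2.1}, since the variance of $B^{H,\lambda}_t-B^{H,\lambda}_s$ equals $\sigma(t-s)$. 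In this notation the left-hand side of the lemma, with $q:=l-r\ge1$, equals
\begin{align*}
\rho(2q)^2-\rho(2q-1)\rho(2q+1).
\end{align*}
We first dispose of $q=1$, i.e. $l=r+1$. Here each factor is bounded by $\rho(0)=\sigma(\delta)$ through Cauchy--Schwarz. Since $C^2_t$ stays bounded as $t\to0$ (the monotonicity and boundedness properties already used in Lemma 2.1), we have $\sigma(\delta)\le c\,\delta^{2H}$. This gives a bound $c\,2^{-4Hm}$. That bound is too weak as it stands, so for small $q$ as well we will still use the expansion described next, which remains valid for $q\ge1$.

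The main step is a Taylor expansion of $\sigma$ away from the origin. We write $\sigma(t)=\frac{2\Gamma(2H)}{(2\lambda)^{2H}}-\frac{2\Gamma(H+\frac12)}{\sqrt{\pi}(2\lambda)^H}t^HK_H(\lambda t)$. Using the Bessel differentiation formula $\frac{d}{dt}[t^HK_H(\lambda t)]=-\lambda t^HK_{H-1}(\lambda t)$ and the recurrence relations for $K_v$, we aim to show, for $t>0$, that
\begin{align*}
|\sigma''(t)|\le c_{H,\lambda}\,\big(t^{2H-2}+1\big), \qquad |\sigma^{(4)}(t)|\le c_{H,\lambda}\big(t^{2H-4}+1\big).
\end{align*}
The singular part comes from the small-argument expansion of $t^HK_H(\lambda t)$, which behaves like $a_0+a_1t^{2H}+a_2t^2+\dots$; the regular part uses the boundedness of $K_v$ on compacts away from $0$ together with exponential decay at infinity. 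Then, with $k=2q$ and $k=2q\pm1$ all at least $1$, the second difference gives
\begin{align*}
\rho(k)=\tfrac12\delta^2\sigma''(k\delta)+O\big(\delta^4\sup_{[(k-1)\delta,(k+1)\delta]}|\sigma^{(4)}|\big).
\end{align*}
For $k=1$ the interval reaches $0$, and we instead bound $|\rho(1)|\le c\delta^{2H}$ directly from the singular expansion.

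We now combine. The expression $\rho(2q)^2-\rho(2q-1)\rho(2q+1)$ is of the form $a_0^2-a_{-1}a_{1}$ with $a_j=\rho(2q+j)$. Writing $a_{\pm1}=a_0\pm\rho'+\tfrac12\rho''$ (discrete differences), the leading terms cancel, and we get
\begin{align*}
a_0^2-a_{-1}a_1=(\Delta\rho)^2-a_0\,\Delta^2\rho+O(\dots).
\end{align*}
This cancellation is what produces decay better than $\rho^2$. Feeding in the bounds on $\sigma''$ and $\sigma^{(4)}$, each term contributes either $\delta^4\big((q\delta)^{2H-2}+1\big)^2$-type pieces or $\delta^{4H}q^{4H-4}$-type pieces. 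The terms carrying the constant ``$+1$'' (the smooth part of $\sigma$ due to tempering) give $c\,\delta^{4}\cdot\delta^{2H}$-type bounds. When combined with the factor $\delta^{2H}$ from the singular part, these yield the stated $c_{H,\lambda}2^{-(2H+4)m}$. The purely singular interactions give $c\,\delta^{4H}q^{4H-6}$, and after crudely bounding $\delta^{4H-2}\cdot\delta^2$ this yields $c_{H,\lambda}2^{-(4H+2)m}$. The case $q=1$, where $\rho(1)$ enters, is handled the same way using $|\rho(1)|\le c\delta^{2H}$ together with $|\rho(3)|\le c\delta^{2H}$.

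\textbf{Main obstacle.} The hardest step is establishing the derivative bounds for $t^HK_H(\lambda t)$ uniformly on $(0,1]$. This requires tracking the small-argument series of $K_H$ (through $K_H=\frac{\pi}{2}\frac{I_{-H}-I_H}{\sin(H\pi)}$) so that the singular $t^{2H}$ term is isolated exactly and all remaining terms are smooth. We would first try to reduce everything to the representation $t^HK_H(\lambda t)=\int_0^\infty e^{-\lambda t\cosh x}\cosh(Hx)\,dx\cdot t^H$, differentiating under the integral.
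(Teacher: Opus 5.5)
Your reduction of the left-hand side to $\rho(2q)^2-\rho(2q-1)\rho(2q+1)$ with $q=l-r$ is correct. The gap is that absolute-value bounds cannot reach the stated rate, and the step where you say they do is an arithmetic slip.

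You find yourself that the purely singular interactions have size $c\,\delta^{4H}q^{4H-6}$. For $q=1$, or any bounded $q$, this is $c\,\delta^{4H}\asymp 2^{-4Hm}$. Writing $\delta^{4H}=\delta^{4H-2}\cdot\delta^2$ does not turn it into $\delta^{4H+2}$; you are off by a factor of order $4^{m}$. Your treatment of $q=1$ via $|\rho(1)|,|\rho(3)|\le c\delta^{2H}$ just reproduces the $c\,\delta^{4H}$ you had already called too weak.

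This is not slack in your estimates. The small-argument series of $z^HK_H(z)$ gives $\sigma(t)=c_1t^{2H}+c_2t^2+O(t^{2H+2})$ with $c_1>0$ and $c_2<0$. Hence, for bounded $k$, $\rho(k)=c_1\delta^{2H}r_H(k)+c_2\delta^2+O(\delta^{2H+2})$, where $r_H(k)=\frac12[(k+1)^{2H}+(k-1)^{2H}-2k^{2H}]$. At $q=1$ the left-hand side therefore equals $c_1^2\delta^{4H}[r_H(2)^2-r_H(1)r_H(3)]+O(\delta^{2H+2})$, and the bracket is a nonzero constant (about $-0.004$ for $H=0.3$). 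Likewise, the interaction of the smooth tempering part $c_2t^2$ (your ``$+1$'') with the singular part has size $\delta^{2H+2}q^{2H-4}$, not $\delta^{2H+4}$. So $|\mathrm{LHS}|$ really is much larger than $2^{-(4H+2)m}$ for small $q$. The inequality can only hold because it is one-sided and these dominant contributions are negative.

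That sign is the missing idea, and it is exactly what the paper's proof uses. Up to a constant factor, the leading term of the expansion in $h=\delta$ is $h^6[(\sigma''')^2-\sigma''\sigma^{(4)}](t)$ at $t=2qh$. For $\sigma\approx c_1t^{2H}$ this equals $c_1^2[2H(2H-1)]^2(2H-2)t^{4H-6}<0$, the continuous form of the log-convexity of $k\mapsto k^{2H-2}$.

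The paper writes $(g''')^2-g''g^{(4)}$, for $g(z)=z^HK_H(z)$, as a quadratic form in $K_H$ and $K_{H-1}$ using the differentiation and recurrence formulas. It then drops every term whose coefficient is non-positive for $H\in(\frac14,\frac12)$. This removes in particular $2(H-1)(2H-1)^2z^{2H-4}K_{H-1}^2(z)\sim z^{4H-6}$ and the $K_HK_{H-1}$ terms of order $z^{2H-4}$. What remains is $[z^{2H}+(2-4H)z^{2H-2}]K_{H-1}^2(z)=O(z^{4H-4})$. Multiplying by $h^6$ and using $z=\lambda t\ge\lambda 2^{-m}$ gives $2^{-(4H+2)m}$.

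To repair your argument, replace the magnitude bounds on $\sigma''$ and $\sigma^{(4)}$ by a signed computation of the leading coefficient, and bound only the positive remainder in absolute value. For small $q$, the expansion around $t=2q\delta$ reaches $t=0$; the paper also treats this case only formally. There you need a signed computation with the exact second differences of $t^{2H}$, e.g.\ $r_H(2)^2-r_H(1)r_H(3)<0$.
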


\begin{proof}
Thanks to \eqref{eq2.1-1}, it holds that
\begin{align}\label{eqA.1}
&\bigg[
\mathbb{E}
\bigg(
\Delta_{2l-1}^{m+1}B^{H,\lambda}
\Delta_{2r-1}^{m+1}B^{H,\lambda}\bigg)\bigg]^{2}
\nonumber\\
&~~-
\mathbb{E}\bigg(
\Delta_{2l-1}^{m+1}B^{H,\lambda}
\Delta_{2r}^{m+1}B^{H,\lambda}\bigg)
\mathbb{E}\bigg(
\Delta_{2l}^{m+1}B^{H,\lambda}
\Delta_{2r-1}^{m+1}B^{H,\lambda}\bigg)\nonumber\\
&=\bigg[C_{\frac{2l-2r+1}{2^{m+1}}}^{2}
\bigg|\frac{2l-2r+1}{2^{m+1}}\bigg|^{2H}
+
C_{\frac{2l-2r-1}{2^{m+1}}}^{2}
\bigg|\frac{2l-2r-1}{2^{m+1}}\bigg|^{2H}
-2C_{\frac{2l-2r}{2^{m+1}}}^{2}
\bigg|\frac{2l-2r}{2^{m+1}}\bigg|^{2H}\bigg]^{2}
\nonumber\\
&~~-
\bigg[C_{\frac{2l-2r}{2^{m+1}}}^{2}
\bigg|\frac{2l-2r}{2^{m+1}}\bigg|^{2H}
+
C_{\frac{2l-2r-2}{2^{m+1}}}^{2}
\bigg|\frac{2l-2r-2}{2^{m+1}}\bigg|^{2H}
-2C_{\frac{2l-2r-1}{2^{m+1}}}^{2}
\bigg|\frac{2l-2r-1}{2^{m+1}}\bigg|^{2H}\bigg]
\nonumber\\
&~~\times
\bigg[C_{\frac{2l-2r+2}{2^{m+1}}}^{2}
\bigg|\frac{2l-2r+2}{2^{m+1}}\bigg|^{2H}
+
C_{\frac{2l-2r}{2^{m+1}}}^{2}
\bigg|\frac{2l-2r}{2^{m+1}}\bigg|^{2H}
-2C_{\frac{2l-2r+1}{2^{m+1}}}^{2}
\bigg|\frac{2l-2r+1}{2^{m+1}}\bigg|^{2H}\bigg]
\nonumber\\
&=\bigg[C_{t+h}^{2}|t+h|^{2H}
+
C_{t-h}^{2}|t-h|^{2H}
-2C_{t}^{2}|t|^{2H}\bigg]^{2}
\nonumber\\
&~~-
\bigg[C_{t}^{2}|t|^{2H}
+
C_{t-2h}^{2}|t-2h|^{2H}
-2C_{t-h}^{2}|t-h|^{2H}\bigg]
\nonumber\\
&~~\times
\bigg[C_{t+2h}^{2}|t+2h|^{2H}
+
C_{t}^{2}|t|^{2H}
-2C_{t+h}^{2}|t+h|^{2H}\bigg],
\end{align}
where for simplicity of notation we denote $t:=\frac{2l-2r}{2^{m+1}}$ and $h:=\frac{1}{2^{m+1}}$. Using the Taylor expansion, we have
\begin{align}
&\bigg[C_{t+h}^{2}|t+h|^{2H}
+
C_{t-h}^{2}|t-h|^{2H}
-2C_{t}^{2}|t|^{2H}\bigg]^{2}
\nonumber\\
&=
\bigg([C_{t}^{2}|t|^{2H}]''\bigg)^{2}h^{4}
+
[C_{t}^{2}|t|^{2H}]''
[C_{t}^{2}|t|^{2H}]^{(4)}\frac{h^{6}}{6}+\cdots,
\nonumber
\end{align}
and
\begin{align}
&\bigg[C_{t}^{2}|t|^{2H}
+
C_{t-2h}^{2}|t-2h|^{2H}
-2C_{t-h}^{2}|t-h|^{2H}\bigg]
\nonumber\\
&~~\times
\bigg[C_{t+2h}^{2}|t+2h|^{2H}
+
C_{t}^{2}|t|^{2H}
-2C_{t+h}^{2}|t+h|^{2H}\bigg]\nonumber\\
&=\bigg([C_{t}^{2}|t|^{2H}]''\bigg)^{2}h^{4}
+\bigg[(1+\frac{1}{6})
[C_{t}^{2}|t|^{2H}]''
[C_{t}^{2}|t|^{2H}]^{(4)}
-\bigg([C_{t}^{2}|t|^{2H}]^{(3)}\bigg)^{2}\bigg]
h^{6}+\cdots.
\nonumber
\end{align}
Furthermore,
\begin{align}\label{eqA.2}
&\bigg[C_{t+h}^{2}|t+h|^{2H}
+
C_{t-h}^{2}|t-h|^{2H}
-2C_{t}^{2}|t|^{2H}\bigg]^{2}
\nonumber\\
&~~-
\bigg[C_{t}^{2}|t|^{2H}
+
C_{t-2h}^{2}|t-2h|^{2H}
-2C_{t-h}^{2}|t-h|^{2H}\bigg]
\nonumber\\
&~~\times
\bigg[C_{t+2h}^{2}|t+2h|^{2H}
+
C_{t}^{2}|t|^{2H}
-2C_{t+h}^{2}|t+h|^{2H}\bigg]\nonumber\\
&=\bigg[\bigg([C_{t}^{2}|t|^{2H}]^{(3)}\bigg)^{2}-
[C_{t}^{2}|t|^{2H}]''
[C_{t}^{2}|t|^{2H}]^{(4)}\bigg]h^{6}+\cdots.
\end{align}
Set $A:=\frac{2\Gamma(2H)}
{(2\lambda)^{2H}}$ and $D:=\frac{2\Gamma(H+\frac{1}{2})}
{\sqrt{\pi}(2\lambda)^{H}}$, then by \eqref{eq2.1-1}, $C_{t}^{2}|t|^{2H}
=A-Dt^{H}K_{H}(\lambda t)$. Let $z:=\lambda t$ and $g(z):=z^{H}K_{H}(z)$, it follows that
\begin{align}\label{eqA.3}
&\bigg([C_{t}^{2}|t|^{2H}]^{(3)}\bigg)^{2}-
[C_{t}^{2}|t|^{2H}]''
[C_{t}^{2}|t|^{2H}]^{(4)}\nonumber\\
&=D^{2}\lambda^{6-2H}
\bigg[[g^{(3)}(z)]^{2}-g^{(2)}(z)g^{(4)}(z)
\bigg].
\end{align}
According to the relation $\frac{d}{dx}(z^{v}K_{v}(z))=-z^{v}K_{v-1}(z)$ for all $v\in \mathbb{R}$ (see e.g., Appendix in \cite{Gaunt}), we can  immediately deduce that
\begin{align}
g'(z)=-z^{H}K_{H-1}(z),\nonumber
\end{align}
\begin{align}\label{eqA.3-10}
g''(z)=-z^{H-1}K_{H-1}(z)
+z^{H}K_{H-2}(z),
\end{align}
\begin{align}
g^{(3)}(z)=3z^{H-1}K_{H-2}(z)
-z^{H}K_{H-3}(z),\nonumber
\end{align}
and
\begin{align}
g^{(4)}(z)=3z^{H-2}K_{H-2}(z)
-6z^{H-1}K_{H-3}(z)+z^{H}K_{H-4}(z).\nonumber
\end{align}
Hence,
\begin{align}\label{eqA.4}
&[g^{(3)}(z)]^{2}-g^{(2)}(z)g^{(4)}(z)\nonumber\\
&=3z^{2H-3}K_{H-1}(z)K_{H-2}(z)+
z^{2H-2}(6K^{2}_{H-2}(z)-
6K_{H-1}(z)K_{H-3}(z))\nonumber\\
&~~~~+z^{2H-1}K_{H-1}(z)K_{H-4}(z)
+z^{2H}(K^{2}_{H-3}(z)-K_{H-2}(z)K_{H-4}(z)).
\end{align}
In view of $K_{v-1}(z)=K_{v+1}(z)-\frac{2v}{z}K_{v}(z)$ for $v\in \mathbb{R}$, we see that
\begin{align}\label{eqA.5}
K_{H-2}(z)=K_{H}(z)-\frac{2(H-1)}{z}K_{H-1}(z),
\end{align}
\begin{align}\label{eqA.6}
K_{H-3}(z)&=K_{H-1}(z)-\frac{2(H-2)}{z}K_{H-2}(z)
\nonumber\\
&=\bigg(1+\frac{4(H-1)(H-2)}{z^{2}}\bigg)K_{H-1}(z)
-\frac{2(H-2)}{z}K_{H}(z),
\end{align}
\begin{align}\label{eqA.7}
K_{H-4}(z)&=K_{H-2}(z)-\frac{2(H-3)}{z}K_{H-3}(z)
\nonumber\\
&=\bigg(1+\frac{4(H-2)(H-3)}{z^{2}}\bigg)K_{H}(z)
\nonumber\\
&~~~~-\bigg(\frac{4H-8}{z}+\frac{8(H-1)(H-2)(H-3)}{z^{3}}
\bigg)K_{H-1}(z).
\end{align}
Inserting \eqref{eqA.5}-\eqref{eqA.7} into \eqref{eqA.4} results in
\begin{align}
&[g^{(3)}(z)]^{2}-g^{(2)}(z)g^{(4)}(z)
=\bigg[-z^{2H}+(4H-2)z^{2H-2}\bigg]K^{2}_{H}(z)
\nonumber\\
&~~+\bigg[(2H-1)z^{2H-1}+(-12H^{2}+16H-5)z^{2H-3}
\bigg]K_{H}(z)K_{H-1}(z)\nonumber\\
&~~+\bigg[z^{2H}+(2-4H)z^{2H-2}+2(H-1)(2H-1)^{2}
z^{2H-4}\bigg]K^{2}_{H-1}(z)\nonumber\\
&~~\leq \bigg[z^{2H}+(2-4H)z^{2H-2}\bigg]
K^{2}_{H-1}(z),\nonumber
\end{align}
where in the last inequality we have used the condition $H\in (\frac{1}{4}, \frac{1}{2})$. Taking into account of \eqref{eqA.2}-\eqref{eqA.3}, we have
\begin{align}\label{eqA.9}
&\bigg[C_{t+h}^{2}|t+h|^{2H}
+
C_{t-h}^{2}|t-h|^{2H}
-2C_{t}^{2}|t|^{2H}\bigg]^{2}
\nonumber\\
&~~-
\bigg[C_{t}^{2}|t|^{2H}
+
C_{t-2h}^{2}|t-2h|^{2H}
-2C_{t-h}^{2}|t-h|^{2H}\bigg]
\nonumber\\
&~~\times
\bigg[C_{t+2h}^{2}|t+2h|^{2H}
+
C_{t}^{2}|t|^{2H}
-2C_{t+h}^{2}|t+h|^{2H}\bigg]\nonumber\\
&\leq
D^{2}\lambda^{6-2H}
\bigg[z^{2H}+(2-4H)z^{2H-2}\bigg]
K^{2}_{H-1}(z)h^{6}
\end{align}
where $D=\frac{2\Gamma(H+\frac{1}{2})}
{\sqrt{\pi}(2\lambda)^{H}}$. By Corollary 3.4 in \cite{Yang-Zheng} that
\begin{align}\label{eqA.10-10}
K_{v}(x)< \sqrt{\frac{\pi}{2}}
\frac{(1+1/x)^{v}}{\sqrt{x+1}}e^{-x},~~~v\in (\frac{1}{2}, \frac{3}{2}), x\in (0,\infty),
\end{align}
in view of $z=\lambda t$, $t=\frac{2l-2r}{2^{m+1}}$ and $h=\frac{1}{2^{m+1}}$, we can see that
\begin{align}\label{eqA.10}
&D^{2}\lambda^{6-2H}
\bigg[z^{2H}+(2-4H)z^{2H-2}\bigg]
K^{2}_{H-1}(z)h^{6}\nonumber\\
&\leq \bigg(\frac{c_{H, \lambda}}{2^{(2H-2)m}}
+\frac{c_{H, \lambda}}{2^{(4H-4)m}}\bigg)
\frac{1}{2^{6m}}\nonumber\\
&=\frac{c_{H, \lambda}}{2^{(2H+4)m}}
+\frac{c_{H, \lambda}}{2^{(4H+2)m}}.
\end{align}
Therefore the assertion can be obtained immediately by substituting \eqref{eqA.10} into \eqref{eqA.9} and taking into account of \eqref{eqA.1}.

\end{proof}

\begin{lemma}
Assume the conditions in Lemma 2.1 hold, then we have for any $l,r \in \bigl[2^{(m-n)}(k-1)+1,\,2^{(m-n)}k\bigr]$,
\[
\mathbb{E}\bigl(\triangle_{2l}^{m+1} B^{H,\lambda} \triangle_{2r}^{m+1} B^{H,\lambda}\bigr)
\leq
\begin{cases}
c_{H,\lambda} |l-r|^{2H-2}\frac{1}{2^{2mH}}
, & l \neq r; \\[10pt]
2C_{\frac{1}{2^m}}^2 \dfrac{1}{2^{2mH}}, & l = r.
\end{cases}
\]
\end{lemma}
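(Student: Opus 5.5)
The plan is to read everything off the covariance formula \eqref{eq2.1}, written through the function $R(t):=C_t^2|t|^{2H}=A-Dt^{H}K_H(\lambda t)$ introduced in the proof of Lemma A.1, with $A,D$ as defined there. Put $h:=\frac{1}{2^{m+1}}$. The increment $\triangle_{2l}^{m+1}B^{H,\lambda}$ lives on $[(2l-1)h,2lh]$, and $\triangle_{2r}^{m+1}B^{H,\lambda}$ on $[(2r-1)h,2rh]$. Polarizing \eqref{eq2.1} gives, componentwise,
\begin{align}
\mathbb{E}\bigl(\triangle_{2l}^{m+1} B^{H,\lambda} \triangle_{2r}^{m+1} B^{H,\lambda}\bigr)
=\frac12\Bigl[R(t+h)+R(t-h)-2R(t)\Bigr],\qquad t:=2|l-r|h, \nonumber
\end{align}
with the convention $R(-s)=R(s)$ and $R(0)=0$. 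I will treat the diagonal and off-diagonal cases separately.

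\textbf{Diagonal case $l=r$.} Here $t=0$, so the right-hand side is $R(h)=C^2_{h}h^{2H}$. By the monotonicity of $C_t^2t^{2H}$ in $t$ (already used in Lemmas 2.1--2.2), this is at most $C^2_{\frac{1}{2^m}}\frac{1}{2^{2mH}}$, which is bounded by the claimed $2C^2_{\frac1{2^m}}\frac{1}{2^{2mH}}$.

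\textbf{Off-diagonal case $l\neq r$.} Now $t\ge 2h$, so $[t-h,t+h]\subset[h,\infty)$ stays away from the origin and $R$ is smooth there. By Taylor's theorem with integral remainder, the second difference equals $h^2R''(\zeta)$ for some $\zeta\in(t-h,t+h)$. By \eqref{eqA.3-10},
\begin{align}
R''(s)=-D\lambda^{2-H}g''(\lambda s),\qquad g''(z)=-z^{H-1}K_{H-1}(z)+z^{H}K_{H-2}(z). \nonumber
\end{align}
Using \eqref{eqA.5} together with the symmetry $K_{-\nu}=K_\nu$, I rewrite $g''$ in terms of $K_{1-H}$ and $K_{2-H}$, and then bound these with the Yang--Zheng estimate \eqref{eqA.10-10}. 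The index $1-H$ lies in $(\frac12,\frac32)$, as that estimate requires. The index $2-H$ falls outside this range, so for it I would use the recurrence to express $K_{2-H}$ through $K_{1-H}$ and $K_{H}$, together with the standard small-argument behaviour $K_\nu(z)\sim c_\nu z^{-\nu}$.

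The outcome I aim for is
\begin{align}
|g''(z)|\le c_H z^{2H-2}\ \text{ for } z\le1,\qquad |g''(z)|\le c_H z^{H}e^{-z}\ \text{ for } z>1. \nonumber
\end{align}
Both regimes give $|R''(s)|\le c_{H,\lambda}s^{2H-2}$ for all $s>0$: on $s\le 1/\lambda$ directly, and on $s>1/\lambda$ because $e^{-z}$ beats any power.

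Since $\zeta\ge t-h\ge t/2$ and $2H-2<0$, this yields
\begin{align}
\bigl|\mathbb{E}(\cdots)\bigr|\le c_{H,\lambda}h^2\Bigl(\frac{t}{2}\Bigr)^{2H-2}=c_{H,\lambda}\,|l-r|^{2H-2}h^{2H}\le c_{H,\lambda}|l-r|^{2H-2}\frac{1}{2^{2mH}}, \nonumber
\end{align}
as claimed.

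\textbf{Main obstacle.} The hardest step is the uniform bound $|R''(s)|\lesssim s^{2H-2}$, specifically controlling $g''$ near $z=0$. Individually, $z^{H-1}K_{1-H}$ and $z^{H}K_{2-H}$ behave like $z^{2H-2}$, so no cancellation is needed. However, the cited inequality \eqref{eqA.10-10} only covers indices in $(\frac12,\frac32)$. My first attempt will be to reduce everything to $K_{1-H}$ and $K_H$ via the recurrence. If that is awkward, I would fall back on the integral representation $K_\nu(z)=\int_0^\infty e^{-z\cosh x}\cosh(\nu x)\,dx$, which gives the small-$z$ bound $K_\nu(z)\le c_\nu z^{-\nu}$ for $\nu>0$ and the large-$z$ exponential decay directly.
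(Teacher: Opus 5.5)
Your proof is correct and follows essentially the paper's route: polarize the covariance into the second difference of $R(t)=C_t^2|t|^{2H}$, reduce to $h^2R''$, and use the Bessel derivative and recurrence formulas to write $R''(s)=-D\lambda^{2-H}\bigl[(1-2H)z^{H-1}K_{1-H}(z)+z^HK_H(z)\bigr]$ with $z=\lambda s$, which is $O(s^{2H-2})$. Your version is more rigorous than the paper's in three respects: the exact mean-value form $h^2R''(\zeta)$ with $\zeta\ge t/2$ replaces the paper's Taylor series truncated after the $h^2$ term (whose tail is never estimated); you keep the factor $\frac12$ and the correct sign of $R''$; and you bound the absolute value of the covariance, which is what the later applications of the lemma actually need.
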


\begin{proof}
When \(l \neq r\), set \(t := \dfrac{2l-2r}{2^{m+1}}\) and \(h := \dfrac{1}{2^{m+1}}\), then
\begin{align}
\mathbb{E}\bigl(\triangle_{2l}^{m+1} B^{H,\lambda} \triangle_{2r}^{m+1} B^{H,\lambda}\bigr)
&= C_{t+h}^2|t+h|^{2H} + C_{t-h}^2|t-h|^{2H} - 2C_t^2|t|^{2H}
\nonumber\\
&= \bigl[C_t^2|t|^{2H}\bigr]'' h^2 + \bigl[C_t^2|t|^{2H}\bigr]^{(4)} \frac{h^4}{12} + \dots.
\end{align}
We denote \(D := \dfrac{2\Gamma\bigl(H+\frac{1}{2}\bigr)}{\sqrt{\pi}(2\lambda)^H}\) and \(z := \lambda t\). In view of \eqref{eqA.3-10} and \eqref{eqA.5}, it holds that
\begin{align}
\bigl[C_t^2|t|^{2H}\bigr]''
&= D\lambda^{2-H}\bigl[-z^{H-1}K_{H-1}(z) + z^H K_{H-2}(z)\bigr]
\nonumber\\
&= D\lambda^{2-H}\bigl[(1-2H)z^{H-1}K_{H-1}(z) + z^H K_H(z)\bigr].
\end{align}
Thanks to \eqref{eqA.10-10}, $z^{H-1}K_{H-1}(z)\leq c z^{2H-2}$. By Corollary 3.4 in \cite{Yang-Zheng} that
\begin{align}
K_v(x) < 2^{v-1}\Gamma(v) \frac{(1+1/x)^v}{\sqrt{x+1}} e^{-x},\quad v \in \bigl(0,\tfrac12\bigr),\,x \in (0,\infty),
\end{align}
then \(z^H K_H(z) \leq cz^{H-\frac{1}{2}}\). Then
\begin{align}
\bigl[C_t^2|t|^{2H}\bigr]'' h^2
&\leq c_{H,\lambda}
\left[
\left(\frac{|l-r|}{2^{m+1}}\right)^{2H-2}+
\left(\frac{|l-r|}{2^{m+1}}\right)^{H-\frac{1}{2}}
\right] \frac{1}{2^{2m}}\nonumber\\
&\leq c_{H,\lambda} |l-r|^{2H-2}\frac{1}{2^{2mH}},
\end{align}
and thus
\begin{align}
\mathbb{E}\bigl(\triangle_{2l}^{m+1} B^{H,\lambda} \triangle_{2r}^{m+1} B^{H,\lambda}\bigr)
\leq c_{H,\lambda} |l-r|^{2H-2}\frac{1}{2^{2mH}}.
\end{align}

When \(l = r\), it is easy to see that
\[
\mathbb{E}\bigl(\triangle_{2l}^{m+1} B^{H,\lambda} \triangle_{2r}^{m+1} B^{H,\lambda}\bigr)
= 2C_h^2|h|^{2H} \leq 2C_{\frac{1}{2^m}}^2 \frac{1}{2^{2mH}}.
\]

\end{proof}

\end{document}